\documentclass[oneside,english]{amsart}
\usepackage[T1]{fontenc}
\usepackage[utf8]{inputenc}
\usepackage{amsthm}
\usepackage{amssymb}
\usepackage{esint}
\usepackage[all]{xy}
\usepackage{lmodern}
\usepackage{color}

\makeatletter

\numberwithin{equation}{section}
\numberwithin{figure}{section}
\theoremstyle{plain}
\newtheorem{thm}{\protect\theoremname}[section]
  \theoremstyle{plain}
  \newtheorem{prop}[thm]{\protect\propositionname}
  \theoremstyle{definition}
  \newtheorem{defn}[thm]{\protect\definitionname}
  \theoremstyle{remark}
  \newtheorem{rem}[thm]{\protect\remarkname}
  \theoremstyle{plain}
  \newtheorem{lem}[thm]{\protect\lemmaname}
  \theoremstyle{definition}
  
  \newtheorem{cor}[thm]{\protect\corollaryname}
  \theoremstyle{plain}

\makeatother

\usepackage{babel}
  \providecommand{\definitionname}{Definition}
  \providecommand{\examplename}{Example}
  \providecommand{\lemmaname}{Lemma}
  \providecommand{\propositionname}{Proposition}
  \providecommand{\remarkname}{Remark}
  \providecommand{\corollaryname}{Corollary}
\providecommand{\theoremname}{Theorem}

\begin{document}

\title{Maurer-Cartan spaces of filtered $L_{\infty}$-algebras}

\address{University of Luxembourg, Mathematics Research Unit,
6 rue Richard Coudenhove-Kalergi L-1359 Luxembourg}

\email{sinan.yalin@uni.lu}

\author{Sinan Yalin}
\begin{abstract}
We study several homotopical and geometric properties of Maurer-Cartan spaces for $L_{\infty}$-algebras
which are not nilpotent, but only filtered in a suitable way. Such algebras play a key role especially in
the deformation theory of algebraic structures. In particular, we prove that the Maurer-Cartan simplicial set preserves fibrations
and quasi-isomorphisms. Then we present an algebraic geometry viewpoint on Maurer-Cartan moduli sets, and we compute the
tangent complex of the associated algebraic stack.
\end{abstract}
\maketitle

\tableofcontents{}

\section*{Introduction}

A guiding principle of deformation theory is to consider that any moduli problem can be interpreted as
the deformation functor of a certain dg Lie algebra, called a deformation complex. That is,
deformation problems are controlled by dg Lie algebras.
This deformation functor is actually built from the Maurer-Cartan elements of this algebra, up to
an action of an algebraic group called the gauge group. This viewpoint emerged as a convergence of ideas
coming from Grothendieck, Schlessinger, Deligne and Drinfeld among others.
Such problems originally arose in geometry, for instance in the pioneering work of Kodaira-Spencer about deformation theory of complex manifolds \cite{KS}
(see also \cite{Man2} for a modern account of the topic).
Deformation problems of algebraic structures have been also adressed through this approach, starting with the work of Gerstenhaber \cite{Ger},
and generalized to algebras over operads (see Chapter 12 of \cite{LV}) and properads (see \cite{MV}).
A rigorous formulation of this guiding principle has been obtained in \cite{Lur} as an equivalence of $(\infty,1)$-categories between formal moduli problems and dg Lie algebras.

This article presents various aspects of Lie theory and deformation theory for $L_{\infty}$-algebras which are not nilpotent, but only filtered
in a suitable way. For this, we study several homotopical and geometric properties of their Maurer-Cartan simplicial sets and Maurer-Cartan moduli sets.
The main motivation comes from the deformation theory of algebraic structures, which relies on differential graded Lie algebras and $L_{\infty}$ algebras of this kind.
For this we refer the reader to Chapter 12 of \cite{LV}, or the papers \cite{Mar} and \cite{MV} in a more general setting.
A well known example of a deformation complex which is not a nilpotent Lie algebra but just a filtered $L_{\infty}$-algebra
is the Gerstenhaber-Schack complex, encoding deformations of bialgebras (see \cite{GS} and \cite{SS}).

The first main result extends Getzler's invariance results \cite{Get} to filtered
$L_{\infty}$ algebras. Let us note $\{F_r\mathfrak{g}\}_{r\in\mathbb{N}}$ the filtration of a filtered $L_{\infty}$ algebra $\mathfrak{g}$ and $\hat{\mathfrak{g}}$ the completion
of $\mathfrak{g}$ with respect to this filtration. Let $MC_{\bullet}(\hat{\mathfrak{g}})$ be the simplicial Maurer-Cartan set of $\hat{\mathfrak{g}}$ and $\mathcal{MC}(\hat{\mathfrak{g}})=\pi_0MC_{\bullet}(\hat{\mathfrak{g}})$ the
Maurer-Cartan moduli set of $\hat{\mathfrak{g}}$. Our result reads:
\begin{thm}
(1) Let $f:\mathfrak{g}\twoheadrightarrow \mathfrak{h}$ be a surjection of filtered $L_{\infty}$ algebras inducing surjections $F_rf:F_r\mathfrak{g}\twoheadrightarrow F_r\mathfrak{h}$ for every integer $r$.
Suppose that for every integer $r$, the $L_{\infty}$ algebras $\mathfrak{}g/F_r\mathfrak{g}$ and $\mathfrak{h}/F_r\mathfrak{h}$ are profinite.
Then $MC_{\bullet}(\hat{f})$ is a fibration of Kan complexes.

(2) Let $f:\mathfrak{g}\stackrel{\sim}{\rightarrow} \mathfrak{h}$ be a quasi-isomorphism of filtered $L_{\infty}$ algebras inducing quasi-isomorphisms $F_rf:F_r\mathfrak{g}\stackrel{\sim}{\rightarrow} F_r\mathfrak{h}$
for every integer $r$.
Suppose that $\mathfrak{g}$ and $\mathfrak{h}$ are concentrated in non-negative degrees.
Then $MC_{\bullet}(\hat{f})$ is a weak equivalence of Kan complexes.
\end{thm}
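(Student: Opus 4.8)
The plan is to reduce both assertions to Getzler's invariance theorems for nilpotent $L_{\infty}$ algebras \cite{Get} and then to pass to an inverse limit over the tower of quotients. The starting observation is that the filtration makes each quotient $\mathfrak{g}/F_r\mathfrak{g}$ a nilpotent $L_{\infty}$ algebra (iterated brackets raise filtration weight, hence vanish in the quotient) and that $\hat{\mathfrak{g}}=\lim_r \mathfrak{g}/F_r\mathfrak{g}$. Since the Maurer-Cartan functor is defined degreewise by imposing the finite-type Maurer-Cartan equation on $\hat{\mathfrak{g}}\hat{\otimes}\Omega_n$, and the solution locus of such an equation commutes with filtered inverse limits, one obtains a natural isomorphism of simplicial sets
\[
MC_{\bullet}(\hat{\mathfrak{g}})\cong\lim_r MC_{\bullet}(\mathfrak{g}/F_r\mathfrak{g}),
\]
and likewise for $\hat{\mathfrak{h}}$, compatibly with $\hat{f}$. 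It therefore suffices to control the induced map of towers $\{MC_{\bullet}(\mathfrak{g}/F_r\mathfrak{g})\}_r\to\{MC_{\bullet}(\mathfrak{h}/F_r\mathfrak{h})\}_r$ and its behaviour under $\lim_r$.

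For part (1), the surjectivity of $f$ together with that of each $F_r f$ forces every induced map $f_r:\mathfrak{g}/F_r\mathfrak{g}\twoheadrightarrow\mathfrak{h}/F_r\mathfrak{h}$ to be a surjection of profinite $L_{\infty}$ algebras, so Getzler's fibration theorem \cite{Get} (valid for such algebras as cofiltered limits of finite-dimensional nilpotent ones) shows that each $MC_{\bullet}(f_r)$ is a Kan fibration. The same theorem applied to the canonical surjections $\mathfrak{g}/F_{r+1}\mathfrak{g}\twoheadrightarrow\mathfrak{g}/F_r\mathfrak{g}$ and their $\mathfrak{h}$-analogues shows that both towers are towers of Kan fibrations between Kan complexes. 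I would then finish by invoking the standard fact that $\lim_r$ is a right Quillen functor for the Reedy model structure on towers of simplicial sets: a tower of fibrations is Reedy fibrant, and a levelwise fibration between Reedy fibrant towers has a fibration as its inverse limit. Applying this to $MC_{\bullet}(f_{\bullet})$ yields that $MC_{\bullet}(\hat{f})=\lim_r MC_{\bullet}(f_r)$ is a Kan fibration; in particular $MC_{\bullet}(\hat{\mathfrak{g}})$ and $MC_{\bullet}(\hat{\mathfrak{h}})$ are Kan, being limits of towers of Kan fibrations.

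For part (2), the short exact sequences $0\to F_r\mathfrak{g}\to\mathfrak{g}\to\mathfrak{g}/F_r\mathfrak{g}\to 0$ together with the hypotheses that $f$ and every $F_r f$ are quasi-isomorphisms give, through the long exact sequences in homology and the five lemma, that each $f_r$ is a quasi-isomorphism of nilpotent $L_{\infty}$ algebras; Getzler's homotopy-invariance theorem \cite{Get} then makes each $MC_{\bullet}(f_r)$ a weak equivalence of Kan complexes. As in part (1) both towers are towers of Kan fibrations, so $\lim_r$ carries this levelwise weak equivalence to a weak equivalence on inverse limits. The non-negativity hypothesis is what makes this transparent and controls the only delicate point, the $\lim^1$-terms: for a nilpotent $L_{\infty}$ algebra concentrated in non-negative degrees one has $\pi_n(MC_{\bullet}(\mathfrak{g}/F_r\mathfrak{g}),x)\cong H^{1-n}((\mathfrak{g}/F_r\mathfrak{g})^x)=0$ for all $n\geq 2$, so each space in the tower is a homotopy $1$-type. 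Hence in the Milnor sequences $0\to\lim^1_r\pi_{n+1}MC_{\bullet}(\mathfrak{g}/F_r\mathfrak{g})\to\pi_n MC_{\bullet}(\hat{\mathfrak{g}})\to\lim_r\pi_n MC_{\bullet}(\mathfrak{g}/F_r\mathfrak{g})\to 0$ the higher $\lim^1$-terms vanish, and since the levelwise equivalences induce isomorphisms of the towers $\{\pi_n MC_{\bullet}(\mathfrak{g}/F_r\mathfrak{g})\}_r$ for every $n$, both the $\lim$- and $\lim^1$-terms agree on source and target; thus $MC_{\bullet}(\hat{f})$ is a bijection on $\pi_0$ and an isomorphism on $\pi_n$ for $n\geq 1$, hence a weak equivalence.

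The main obstacle lies entirely in the limit step, not in the levelwise Getzler input. One point is to justify the identification $MC_{\bullet}(\hat{\mathfrak{g}})\cong\lim_r MC_{\bullet}(\mathfrak{g}/F_r\mathfrak{g})$ at the simplicial level, that is, that $\hat{\mathfrak{g}}\hat{\otimes}\Omega_n$ is the inverse limit of the $(\mathfrak{g}/F_r\mathfrak{g})\otimes\Omega_n$ and that solving the Maurer-Cartan equation commutes with this limit; this is formal but must be checked compatibly with faces and degeneracies. The more substantial point is to guarantee that $\lim_r$ destroys no homotopical information: that the towers are genuinely Reedy fibrant (so fibrations survive, giving part (1)) and that the relevant $\lim^1$-obstructions vanish (so weak equivalences survive, giving part (2)). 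It is precisely for these two survival statements that the profiniteness hypothesis in (1) --- ensuring the levelwise Getzler results and the Kan property --- and the non-negativity hypothesis in (2) --- forcing the $1$-type property and hence the vanishing of the higher derived limits --- are needed. Once these inputs are in place, both statements follow formally.
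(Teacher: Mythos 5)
The argument you give for part (1) rests on a false lemma, and it is exactly at the step you call ``standard.'' It is not true that a levelwise fibration between Reedy fibrant towers of simplicial sets has a fibration as its inverse limit. The functor $\lim_r$ is right Quillen for the Reedy (here, injective) model structure on towers, so it sends \emph{Reedy fibrations} to fibrations; and a Reedy fibration of towers is a map whose relative matching maps
\[
MC_{\bullet}(\mathfrak{g}/F_{r+1}\mathfrak{g})\longrightarrow MC_{\bullet}(\mathfrak{h}/F_{r+1}\mathfrak{h})\times_{MC_{\bullet}(\mathfrak{h}/F_{r}\mathfrak{h})}MC_{\bullet}(\mathfrak{g}/F_{r}\mathfrak{g})
\]
are fibrations --- a strictly stronger condition than being a levelwise fibration between towers of fibrations. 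Reedy fibrancy of the two \emph{objects} does not upgrade a levelwise fibration to a Reedy fibration of the \emph{map}; you have conflated the two. Verifying this relative matching condition is the entire content of the paper's proof of (1): first, using the surjectivity of each $F_rf$, one shows at the chain level (Proposition 3.1(2)) that $\mathfrak{g}/F_{r+1}\mathfrak{g}\rightarrow \mathfrak{h}/F_{r+1}\mathfrak{h}\times_{\mathfrak{h}/F_r\mathfrak{h}}\mathfrak{g}/F_r\mathfrak{g}$ is a surjection of nilpotent $L_{\infty}$-algebras, so that Getzler's theorem gives a fibration onto $MC_{\bullet}$ of the pullback; second --- and this is where the profiniteness hypothesis actually enters --- one identifies $MC_{\bullet}$ of the pullback with the pullback of the $MC_{\bullet}$'s, via Berglund's representability $MC_{\bullet}(\mathfrak{g})\cong Mor_{CDGA_{\mathbb{K}}}(C^*(\mathfrak{g}),\Omega_{\bullet})$ for profinite nilpotent algebras together with the isomorphism $C^*(\mathfrak{g}_1)\otimes_{C^*(\mathfrak{h})}C^*(\mathfrak{g}_2)\cong C^*(\mathfrak{g}_1\times_{\mathfrak{h}}\mathfrak{g}_2)$ (Lemma 3.3 of the paper). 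Your closing claim that profiniteness is needed ``for the levelwise Getzler results and the Kan property'' confirms the gap: Getzler's theorems need only nilpotency, which the quotients $\mathfrak{g}/F_r\mathfrak{g}$ possess automatically from the filtration axioms, so your proof never genuinely uses the profiniteness hypothesis --- because it skips the one step for which that hypothesis is in the statement.

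Part (2), by contrast, is sound and follows essentially the paper's route: the five lemma applied to the sequences $F_r\mathfrak{g}\rightarrow\mathfrak{g}\rightarrow\mathfrak{g}/F_r\mathfrak{g}$ yields levelwise quasi-isomorphisms, Getzler gives levelwise weak equivalences, and the inverse limit of a levelwise weak equivalence between towers of fibrations of fibrant simplicial sets is a weak equivalence (the paper invokes the dual of Proposition 2.5 of \cite{CS1}; this is Ken Brown's lemma applied to the right Quillen functor $\lim_r$, for which Reedy fibrancy of the objects \emph{does} suffice --- note the asymmetry with part (1)). Your Milnor-sequence argument proves the same thing, and the $1$-type observation, while correct, is not needed: since the levelwise equivalences induce isomorphisms of the towers of homotopy groups in every degree, the $\lim$- and $\lim^1$-terms already agree on source and target without any vanishing. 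Accordingly, the non-negativity hypothesis is consumed by the levelwise input for the nilpotent quotients, not by the passage to the limit as your proposal suggests.
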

The proof relies on homotopical algebra arguments and the results of \cite{Get}, using the fact that under the aforementioned assumptions,
the Maurer-Cartan simplicial set is the limit of a tower of fibrations. Let us note that Part (2) of Theorem 0.1 has an analogue for $L_{\infty}$-quasi-isomorphisms
of $L_{\infty}$-algebras which is the subject of \cite{DR}.
As a corollary we get:
\begin{cor}
(1) A quasi-isomorphism of filtered $L_{\infty}$-algebras concentrated in non-negative degrees $f:\mathfrak{g}\rightarrow \mathfrak{h}$, restricting to a quasi-isomorphism
at each stage of the filtration induces a bijection of Maurer-Cartan moduli sets
\[
\mathcal{MC}(\hat{\mathfrak{g}})\cong \mathcal{MC}(\hat{\mathfrak{h}}).
\]

(1) A quasi-isomorphism of filtered dg Lie algebras concentrated in non-negative degrees $f:\mathfrak{g}\rightarrow \mathfrak{h}$, restricting to a quasi-isomorphism
at each stage of the filtration induces a bijection of Maurer-Cartan moduli sets
\[
MC(\hat{\mathfrak{g}})/exp(\hat{\mathfrak{g}}^0)\cong MC(\hat{\mathfrak{g}})/exp(\hat{\mathfrak{h}}^0).
\]
\end{cor}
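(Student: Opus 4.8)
The plan is to deduce both statements directly from Part (2) of Theorem 0.1, together with two standard facts: that a weak equivalence between Kan complexes induces a bijection on $\pi_0$, and that for a complete filtered (hence pronilpotent) dg Lie algebra the path components of the Maurer-Cartan simplicial set are exactly the gauge equivalence classes of Maurer-Cartan elements.

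For the first assertion, I would begin by observing that $f$ satisfies precisely the hypotheses of Theorem 0.1(2): it is a quasi-isomorphism of filtered $L_{\infty}$-algebras concentrated in non-negative degrees, restricting to a quasi-isomorphism $F_r f$ at each stage of the filtration. Hence $MC_{\bullet}(\hat{f}):MC_{\bullet}(\hat{\mathfrak{g}})\to MC_{\bullet}(\hat{\mathfrak{h}})$ is a weak equivalence of Kan complexes. Since a weak equivalence between Kan complexes is in particular a bijection on $\pi_0$, and since by definition $\mathcal{MC}(\hat{\mathfrak{g}})=\pi_0 MC_{\bullet}(\hat{\mathfrak{g}})$, applying the functor $\pi_0$ yields the claimed bijection $\mathcal{MC}(\hat{\mathfrak{g}})\cong\mathcal{MC}(\hat{\mathfrak{h}})$.

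For the second assertion, I would note that a filtered dg Lie algebra is in particular a filtered $L_{\infty}$-algebra, so the first assertion applies verbatim and already produces $\mathcal{MC}(\hat{\mathfrak{g}})\cong\mathcal{MC}(\hat{\mathfrak{h}})$; it then remains only to rewrite each side in terms of the gauge action. Because $\hat{\mathfrak{g}}$ is complete with respect to its filtration it is pronilpotent, so the group $\exp(\hat{\mathfrak{g}}^0)$ is well defined and acts on $MC(\hat{\mathfrak{g}})$ by the usual gauge action, and symmetrically for $\hat{\mathfrak{h}}$. The classical description of the path components of the Maurer-Cartan simplicial set then furnishes natural bijections $\pi_0 MC_{\bullet}(\hat{\mathfrak{g}})\cong MC(\hat{\mathfrak{g}})/\exp(\hat{\mathfrak{g}}^0)$ and $\pi_0 MC_{\bullet}(\hat{\mathfrak{h}})\cong MC(\hat{\mathfrak{h}})/\exp(\hat{\mathfrak{h}}^0)$. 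Transporting the bijection of the first assertion along these identifications yields $MC(\hat{\mathfrak{g}})/\exp(\hat{\mathfrak{g}}^0)\cong MC(\hat{\mathfrak{h}})/\exp(\hat{\mathfrak{h}}^0)$, as required.

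The argument is essentially formal once Theorem 0.1 is available; the only point demanding genuine care, and the step I expect to be the main obstacle, is the identification of path components with gauge orbits. This result is classical for \emph{nilpotent} dg Lie algebras, and one must verify that it persists for the pronilpotent completion $\hat{\mathfrak{g}}$, namely that the filtration and the passage to the completion are compatible both with the gauge action and with the simplicial homotopy relation defining $\pi_0$. Finally, to conclude with an honest bijection rather than merely two abstractly isomorphic sets, I would check naturality in the algebra, so that the two identifications are intertwined by $MC_{\bullet}(\hat{f})$ and the resulting bijection is induced by $f$ itself.
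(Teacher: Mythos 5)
Your proposal is correct and follows essentially the same route as the paper: part (1) is obtained by applying Theorem 0.1(2) and taking $\pi_0$ of the resulting weak equivalence of Kan complexes, and part (2) by identifying $\pi_0 MC_{\bullet}(\hat{\mathfrak{g}})$ with the gauge quotient $MC(\hat{\mathfrak{g}})/\exp(\hat{\mathfrak{g}}^0)$. The step you rightly single out as the genuine obstacle---extending the nilpotent homotopy-equals-gauge correspondence to complete dg Lie algebras---is exactly what the paper establishes as Proposition 2.8, via the explicit description $\mathfrak{g}\hat{\otimes}\Omega_1 \cong \mathfrak{g}\{t\}\oplus\mathfrak{g}\{t\}dt$ in formal power series and the technical results of Appendix C of Dolgushev's paper.
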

The first part of Corollary 0.2 follows from Theorem 0.1 by taking the connected components of the simplicial Maurer-Cartan sets.
The second part follows from the first part via an identification between the connected components of the simplicial Maurer-Cartan set and the quotient of Maurer-Cartan elements by the action of the gauge group. This bijection was previously known for nilpotent dg Lie algebras \cite{Man}.
The generalization of this bijection to complete dg Lie algebras relies in particular on the technical results of Appendix C in \cite{Dol}.
Another corollary of Theorem 0.1 gives the following homotopical properties of the simplicial Maurer-Cartan functor:
\begin{cor}
Let $\mathfrak{g}$ be a filtered $L_{\infty}$ algebra.
The functor
\[
MC_{\bullet}(\mathfrak{g}\hat{\otimes}-):CDGA_{\mathbb{K}}\rightarrow sSet
\]
preserves fibrations and weak equivalences of cdgas.
\end{cor}

We are then interested in an interpretation of Maurer-Cartan moduli sets and the associated deformation functors
in algebraic geometry.
Let $\mathfrak{g}$ be a filtered dg Lie algebra. On the one hand, there is a notion of global tangent space for the associated deformation
functor $Def_{\mathfrak{g}}=MC(\mathfrak{g}\hat{\otimes}-)/exp(\mathfrak{g}^0\hat{\otimes}-)$, which is simply defined by evaluating
this functor on the algebra of dual numbers $\mathbb{K}[t]/(t^2)$ (see \cite{Sch}). This vector
space $t_{Def_{\mathfrak{g}}}$ is the first cohomology group of $\hat{\mathfrak{g}}$.
On the other hand, under some suitable assumptions, Maurer-Cartan elements $MC(\hat{\mathfrak{g}})$ form a scheme with an action
of the prounipotent algebraic group $exp(\hat{\mathfrak{g}})$. The associated quotient stack $[MC(\hat{\mathfrak{g}})/exp(\hat{\mathfrak{g}})]$
admits a notion of tangent complex over a given point, that is, over a Maurer-Cartan element of $\hat{\mathfrak{g}}$.
The precise link between these two notions does not seem to appear in the literature, so we fill the gap and prove:
\begin{thm}
Let $g$ be a filtered dg Lie algebra.
Suppose that for every integer $r$, the vector space of degree $1$ elements of $\mathfrak{g}/F_r\mathfrak{g}$ is of finite dimension.
Let $\varphi$ be a Maurer-Cartan element of $\hat{\mathfrak{g}}$ and $[\varphi]\in\mathcal{MC}(\hat{\mathfrak{g}})$ its equivalence class.
The $0^{th}$ cohomology group of the tangent complex of the algebraic stack $[MC(\hat{\mathfrak{g}})/exp(\hat{\mathfrak{g}}^0)]$ at $[\varphi]$ is the global tangent space of the deformation
functor $Def_{\hat{\mathfrak{g}}^{\varphi}}$, that is,
\[
H^0\mathbb{T}_{[MC(\hat{\mathfrak{g}})/exp(\hat{\mathfrak{g}}^0)],[\varphi]} = t_{Def_{\hat{\mathfrak{g}}^{\varphi}}} = H^1\hat{\mathfrak{g}}^{\varphi}.
\]
\end{thm}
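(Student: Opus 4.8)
\emph{Proof proposal.} The plan is to reduce both sides of the asserted equality to the first cohomology of the twist of $\hat{\mathfrak{g}}$ by $\varphi$. Recall that $\hat{\mathfrak{g}}^{\varphi}$ has the same underlying graded Lie algebra as $\hat{\mathfrak{g}}$ but carries the twisted differential $d^{\varphi}=d+[\varphi,-]$; the Maurer-Cartan equation for $\varphi$ is exactly what makes $(d^{\varphi})^{2}=0$, and the Maurer-Cartan elements of $\hat{\mathfrak{g}}^{\varphi}$ are precisely the $\psi$ with $\varphi+\psi\in MC(\hat{\mathfrak{g}})$. Thus $\hat{\mathfrak{g}}^{\varphi}$ controls the deformations of $\varphi$, the point $[\varphi]$ corresponding to the zero element of $\mathcal{MC}(\hat{\mathfrak{g}}^{\varphi})$, which is why the deformation functor $Def_{\hat{\mathfrak{g}}^{\varphi}}$ is the relevant object at $[\varphi]$.

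On the geometric side I would use the general description of the tangent complex of a quotient stack. Under the finiteness hypothesis on the degree-one parts of the $\mathfrak{g}/F_{r}\mathfrak{g}$, the Maurer-Cartan locus $MC(\hat{\mathfrak{g}})$ is a (pro-)scheme, realised as the limit over $r$ of the finite-type affine schemes $MC(\mathfrak{g}/F_{r}\mathfrak{g})$, acted on by the prounipotent algebraic group $\exp(\hat{\mathfrak{g}}^{0})$ with Lie algebra $\hat{\mathfrak{g}}^{0}$. For such a quotient the tangent complex at $\varphi$ is represented by the two-term complex
\[
\hat{\mathfrak{g}}^{0}\xrightarrow{\ a_{\varphi}\ }T_{\varphi}MC(\hat{\mathfrak{g}})
\]
placed in degrees $-1$ and $0$, where $a_{\varphi}$ is the derivative of the orbit map; consequently $H^{0}\mathbb{T}_{[MC(\hat{\mathfrak{g}})/\exp(\hat{\mathfrak{g}}^{0})],[\varphi]}=\mathrm{coker}\,a_{\varphi}$.

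Next I would identify the two terms with cocycles and coboundaries of $\hat{\mathfrak{g}}^{\varphi}$. Linearising $d\varphi+\tfrac{1}{2}[\varphi,\varphi]=0$ along $\varphi+\epsilon\eta$ over the dual numbers and using $[\eta,\varphi]=[\varphi,\eta]$ for $\eta$ of degree one, the first-order condition reads $d\eta+[\varphi,\eta]=d^{\varphi}\eta=0$, so $T_{\varphi}MC(\hat{\mathfrak{g}})=Z^{1}(\hat{\mathfrak{g}}^{\varphi})$. Differentiating the gauge flow at the identity gives $a_{\varphi}(\lambda)=-d\lambda+[\lambda,\varphi]=-d^{\varphi}\lambda$, so that $\mathrm{im}\,a_{\varphi}=B^{1}(\hat{\mathfrak{g}}^{\varphi})$ (the overall sign being immaterial for the image). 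Taking the cokernel yields
\[
H^{0}\mathbb{T}_{[MC(\hat{\mathfrak{g}})/\exp(\hat{\mathfrak{g}}^{0})],[\varphi]}=Z^{1}(\hat{\mathfrak{g}}^{\varphi})/B^{1}(\hat{\mathfrak{g}}^{\varphi})=H^{1}\hat{\mathfrak{g}}^{\varphi}.
\]

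Finally I would compute the deformation side directly: evaluating $Def_{\hat{\mathfrak{g}}^{\varphi}}$ on the dual numbers $\mathbb{K}[t]/(t^{2})$, an element $\eta t$ solves the Maurer-Cartan equation iff $d^{\varphi}\eta=0$ (the bracket term being a multiple of $t^{2}=0$), and two solutions are gauge equivalent iff they differ by some $d^{\varphi}\lambda$; hence $t_{Def_{\hat{\mathfrak{g}}^{\varphi}}}=H^{1}\hat{\mathfrak{g}}^{\varphi}$, in accordance with the identification $t_{Def}=H^{1}$ recalled in the introduction, applied here to the already complete algebra $\hat{\mathfrak{g}}^{\varphi}$. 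Comparing the two computations gives the statement. I expect the genuine work to lie not in this linear algebra but in the geometric input of the second step: one must justify that $MC(\hat{\mathfrak{g}})$ really is a scheme and $\exp(\hat{\mathfrak{g}}^{0})$ a prounipotent algebraic group, so that $[MC(\hat{\mathfrak{g}})/\exp(\hat{\mathfrak{g}}^{0})]$ is a bona fide algebraic stack with tangent complex the displayed $\mathrm{Lie}$-to-$T_{\varphi}$ two-term complex, and to control the passage to the limit over the filtration, where the finiteness hypothesis on $(\mathfrak{g}/F_{r}\mathfrak{g})^{1}$ intervenes.
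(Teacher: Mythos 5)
Your proposal is correct and follows essentially the same route as the paper: linearizing the Maurer--Cartan equation over the dual numbers to get $T_{\varphi}MC(\hat{\mathfrak{g}})=Z^{1}(\hat{\mathfrak{g}}^{\varphi})$ (the paper's Lemma on the tangent space of the Maurer--Cartan variety), computing the derivative of the orbit map to be $-d_{\varphi}:\hat{\mathfrak{g}}^{0}\rightarrow Z^{1}(\hat{\mathfrak{g}}^{\varphi})$ (the paper's Lemma on the tangent complex of the quotient stack, proved there by an explicit Yoneda/functor-of-points computation of the gauge action on $\mathbb{K}[t]/(t^{2})$-points), and matching the cokernel $H^{1}\hat{\mathfrak{g}}^{\varphi}$ with the Schlessinger tangent space of $Def_{\hat{\mathfrak{g}}^{\varphi}}$. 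The foundational points you flag at the end are exactly those the paper supplies elsewhere: the prestack/stackification identification with $[MC(\hat{\mathfrak{g}})/exp(\hat{\mathfrak{g}}^{0})]$, the Picard-category description of the truncated tangent complex, and the observation that under the filtration-wise finiteness hypothesis $MC(\hat{\mathfrak{g}})=\lim_{r}MC(\mathfrak{g}/F_{r}\mathfrak{g})$ is a limit of affine varieties, hence still a scheme.
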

This applies in particular to the deformation theory of algebraic structures. Indeed, if $P$ is a
Koszul dg properad and $X$ a chain complex, the corresponding deformation complex is defined by $\mathfrak{g}_{P,X}=Hom_{\Sigma}(\overline{C},End_X)$.
It is a complete $L_{\infty}$ algebra whose Maurer-Cartan elements are the $P_{\infty}$-algebra structures on $X$.
Under a certain technical condition about the Koszul dual of $P$, the complex $\mathfrak{g}_{P,X}$
satisfies the required assumption of Theorem 0.3. This works for a broad range of algebras and bialgebras:
\begin{itemize}
\item Associative algebras, commutative algebras, Lie algebras, Poisson algebras;
\item Associative and coassociative bialgebras;
\item Lie bialgebras (mathematical physics, quantum group theory);
\item Involutive Lie bialgebras (string topology);
\item Frobenius bialgebras (Poincaré duality, field theories).
\end{itemize}

\noindent
\textit{Organization of the article.}
The first section recalls the two equivalent definitions of $L_{\infty}$ algebras, the notion
of filtration and the extension of a filtered $L_{\infty}$ algebra by a commutative differential graded algebra.
Section 2 starts with the definition of Maurer-Cartan elements and the important operation of twisting by
a Maurer-Cartan element. In 2.2 we define the simplicial Maurer-Cartan set and the Maurer-Cartan moduli set.
In 2.3 we deal with filtered dg Lie algebras and show that in this setting, there is a bijection between
connected components of the simplicial Maurer-Cartan set and equivalences classes of Maurer-Cartan elements
under the action of the gauge group. Section 3 is devoted to the proofs of Theorem 0.1 and Theorem 0.2.
In 3.3 we conclude with an invariance result under the twisting operation. The algebraic geometry intepretation
is the subject of Section 4. In 4.1 we recall the notion of an action groupoid in a category, and the definition
of the stack associated to the action of a smooth affine group scheme on a noetherian scheme. In 4.2 and
4.3 we prove Theorem 0.3. We start with some recollections about the tangent complex of an algebraic stack, apply
this notion to the quotient of a scheme by a smooth algebraic group, then explicit the tangent complex
in the case of the quotient of the scheme of Maurer-Cartan elements by the action of the gauge group.
In 4.4 we explain briefly how to weaken the assumption that the vector space of degree 1 elements in the Lie algebra is of finite dimension.
In 4.5 we show how it applies to algebraic deformation theory in the formalism of algebras over
Koszul properads. We conclude with a remark about a possible generalization of this geometric interpretation
to $L_{\infty}$ deformation complexes.

\noindent
\textit{Acknowledgements.} I would like to thank Gennaro di Brino and Bertrand Toen for useful discussions about stacks.

\section{Filtered $L_{\infty}$ algebras}

Throughout this paper, we work in the category $Ch_{\mathbb{K}}$ of unbounded chain complexes over a field $\mathbb{K}$ of
characteristic zero. We adopt the cohomological grading convention,  so the differentials raise degrees by $1$.
For any chain complex $X$, we denote by $sX$ its suspension, defined by $sX^n=X^{n-1}$ for every integer $n$.
We say that a chain complex is finite if it is bounded and of finite dimension in each degree.
A profinite chain complex is a sequential limit of finite chain complexes.

\subsection{$L_{\infty}$ algebras}

There are two equivalent ways to define $L_{\infty}$ algebras:
\begin{defn}
(1) An $L_{\infty}$ algebra is a graded vector space $\mathfrak{g}=\{\mathfrak{g}_n\}_{n\in\mathbb{Z}}$ equipped with maps
$l_k:\mathfrak{g}^{\otimes k}\rightarrow \mathfrak{g}$ of degree $k-2$, for $k\geq 1$, satisfying the following properties:
\begin{itemize}
\item $[...,x_i,x_{i+1},...]=-(-1)^{|x_i||x_{i+1}|}[...,x_{i+1},x_i,...]$
\item for every $k\geq 1$, the generalized Jacobi identities
\[
\sum_{i=1}^k\sum_{\sigma\in Sh(i,k-i)}(-1)^{\epsilon(i)}[[x_{\sigma(1)},...,x_{\sigma(i)}],x_{\sigma(i+1)},...,x_{\sigma(k)}]=0
\]
where $\sigma$ ranges over the $(i,k-i)$-shuffles and
\[
\epsilon(i) = i+\sum_{j_1<j_2,\sigma(j_1)>\sigma(j_2)}(|x_{j_1}||x_{j_2}|+1).
\]
\end{itemize}

(2) An $L_{\infty}$ algebra structure on a graded vector space $\mathfrak{g}=\{\mathfrak{g}_n\}_{n\in\mathbb{Z}}$ is a
coderivation of cofree coalgebras $Q:\Lambda^{\bullet\geq 1}s\mathfrak{g}\rightarrow \Lambda^{\bullet\geq 1}s\mathfrak{g}$ of degree $-1$ such that $Q^2=0$,
where the notation $\Lambda^{\bullet\geq 1}s\mathfrak{g}$ stands for the exterior powers on $s\mathfrak{g}$ without the weight zero term
$\Lambda^0s\mathfrak{g}=\mathbb{K}$.
\end{defn}
The generalized Jacobi relations imply in particular that the bracket $l_1$ is a differential, so we can speak about differential graded (dg for short) $L_{\infty}$
algebras. One recover the first definition from the second via the following argument.
The coderivation $Q$ is, by definition, entirely determined by its values on the generators, that is by the composite
\[
Q_{proj}:\Lambda^{\bullet\geq 1}s\mathfrak{g}\stackrel{Q}{\rightarrow}\Lambda^{\bullet\geq 1}s\mathfrak{g}\stackrel{proj}{\rightarrow}s\mathfrak{g}.
\]
This map is equivalent to the datum of a collection of maps
\[
Q^k:\Lambda^ks\mathfrak{g}\hookrightarrow\Lambda^{\bullet\geq 1}s\mathfrak{g}\stackrel{Q_{proj}}{\rightarrow} s\mathfrak{g}.
\]
These maps, in turn, determine the brackets $l_k$ via the formulae
\[
l_k=s^{-1}\circ Q^k\circ s^{\otimes k}:\Lambda^k\mathfrak{g}\rightarrow\Lambda^ks\mathfrak{g}\rightarrow s\mathfrak{g}\rightarrow \mathfrak{g}.
\]
In the converse direction, one builds the coderivation $Q$ from the brackets $l_k$.
The dual of the quasi-cofree coalgebra $(\Lambda s\mathfrak{g},Q)$ obtained in this way is called the Chevalley-Eilenberg algebra of $\mathfrak{g}$
and will be noted $C^*(\mathfrak{g})$.
\begin{rem}
If $l_k=0$ for $k\geq 2$ then $\mathfrak{g}$ is a chain complex. If $l_k=0$ for $k\geq 3$ then $\mathfrak{g}$ is a dg Lie algebra.
\end{rem}

\subsection{Filtrations}

We use Definition 2.1 of \cite{Ber}.
An $L_{\infty}$ algebra $\mathfrak{g}$ is filtered if it admits a decreasing filtration
\[
\mathfrak{g}=F_1\mathfrak{g}\supseteq F_2\mathfrak{g}\supseteq...\supseteq F_r\mathfrak{g}\supseteq ...
\]
compatible with the brackets: for every $l\geq 1$,
\[
[F_r\mathfrak{g},\mathfrak{g},...,\mathfrak{g}]\subset F_{r+1}\mathfrak{g}.
\]
This compatibility implies that the quotients $\mathfrak{g}/F_r\mathfrak{g}$ are still $L_{\infty}$ algebras.
We suppose moreover that for every $r$, there exists an integer $N(r)$ such that $[\mathfrak{g}^{\wedge l}]\subseteq F_r\mathfrak{g}$
for every $l>N(r)$, where the short notation $[\mathfrak{g}^{\wedge l}]$ stands for the $l^{th}$ bracket applied to any $l$-tuple of elements of $\mathfrak{g}$.
Then the quotients $\mathfrak{g}/F_r\mathfrak{g}$ are nilpotent $L_{\infty}$-algebras in the sense of \cite{Get}, Definition 4.2.

We denote by $dgL_{\infty}^{filt}$ the category of filtered $L_{\infty}$ algebras, with morphisms the morphisms
of $L_{\infty}$ algebras $f:\mathfrak{g}\rightarrow \mathfrak{h}$ inducing morphisms $F_rf:F_r\mathfrak{g}\rightarrow F_r\mathfrak{h}$ for every $r$.

\subsection{Extension by artinian algebras and cgdas}

We denote by $Art_{\mathbb{K}}$ the category of local artinian unitary commutative $\mathbb{K}$-algebras of residue field $\mathbb{K}$. These are finitely generated unitary commutative
$\mathbb{K}$-algebras $A$ with a unique maximal ideal $m_A$, so that $A/m_A\cong\mathbb{K}$ (hence a decomposition $A=m_A\oplus\mathbb{K}$ as vector spaces).
Moreover, such an algebra $A$ satisfies the descending chain condition on ideals: every totally ordered subset of the set of ideals of $A$ (with the partial order given by the inclusion)
admits a minimum.
For every $A\in Art_{\mathbb{K}}$ with maximal ideal $m_A$, there is a canonical $L_{\infty}$-structure on $\mathfrak{g}\otimes m_A$ extending the one of $\mathfrak{g}$.
For every $\{\tau_i\otimes a_i\in \mathfrak{g}\otimes m_A\}_{1\leq i\leq k}$, the brackets of this structure are defined by
\[
l_k^A(\tau_1\otimes a_1,...,\tau_k\otimes a_k)= l_k(\tau_1,...,\tau_k)\otimes a_1.a_2...a_k.
\]
The filtration of $\mathfrak{g}\otimes m_A$ trivially extends the one of $\mathfrak{g}$, that is
\[
F_r(\mathfrak{g}\otimes m_A)=F_r\mathfrak{g}\otimes m_A.
\]

Let us note $CDGA_{\mathbb{K}}$ the category of commutative differential graded algebras.
Let $A$ be a cdga, then $\mathfrak{g}\otimes A$ is a filtered $L_{\infty}$ algebra by the same construction as above.
Both constructions are functorial in $A$, hence functors
\[
\mathfrak{g}\otimes - :Art_{\mathbb{K}}\rightarrow dgL_{\infty}^{filt}
\]
and
\[
\mathfrak{g}\otimes - :CDGA_{\mathbb{K}}\rightarrow dgL_{\infty}^{filt}.
\]

\section{Completion and Maurer-Cartan spaces}

We define the Maurer-Cartan elements, the twisting operation, the simplicial Maurer-Cartan set and the moduli set of Maurer-Cartan elements.
In the case of dg Lie algebras, we compare this moduli set with the one obtained from the gauge group action.

\subsection{Completion and the Maurer-Cartan series}

\subsubsection{Convergent series}

In a dg Lie algebra, Maurer-Cartan elements are degree $1$ solutions of the Maurer-Cartan equation
\[
d(x)+\frac{1}{2}[x,x]=0,
\]
where $d$ is the differential of the dg Lie algebra and $[-,-]$ its Lie bracket.
One would like to have well defined Maurer-Cartan elements in an $L_{\infty}$-algebra.
In this situation, we have to define an equation involving all the brackets $l_k$.
Let $\mathfrak{g}$ be a dg $L_{\infty}$-algebra and $\tau\in \mathfrak{g}^1$ an element of degree $1$, we set
\[
\mathcal{F}(\tau)=\sum_{k\geq 1} \frac{1}{k!} [\tau^{\wedge k}]
\]
and say that $\tau\in MC(\mathfrak{g})$ if $\mathcal{F}(\tau)=0$.
But $\mathcal{F}(\tau)$ is a priori a divergent series, so we need the following local nilpotence condition:
\[
\forall \tau\in \mathfrak{g}, \exists K / [\tau^{\wedge k}]=0 \, \text{for every $k>K$},
\]
which implies that for every $\tau$ the expression $\mathcal{F}(\tau)$ is polynomial in $\tau$.
Assuming that $\mathfrak{g}$ is nilpotent in the sense of Definition 4.2 in \cite{Get} (i.e its lower central series terminates) implies that it is locally nilpotent.

To be filtered is not sufficient to get convergent series in $\mathfrak{g}$, but is enough to get convergent series
in the completion $\hat{\mathfrak{g}}=lim \mathfrak{g}/F_r\mathfrak{g}$ of $\mathfrak{g}$ with respect to this filtration. This is a complete
$L_{\infty}$-algebra in the sense of \cite{Ber}.
We thus define Maurer-Cartan elements of $\mathfrak{g}$ as Maurer-Cartan elements of $\hat{\mathfrak{g}}$.
We denote by $MC(\hat{\mathfrak{g}})$ the set of Maurer-Cartan elements.

\subsubsection{Twisting by a Maurer-Cartan element}

An important operation on a complete $L_{\infty}$ algebra is the twisting by a Maurer-Cartan element:
\begin{defn}
Let $\mathfrak{g}$ be a complete $L_{\infty}$ and $\varphi$ be a Maurer-Cartan element of $\mathfrak{g}$.
The twisting of $\mathfrak{g}$ by $\varphi$, or twisted $L_{\infty}$-algebra, is the $L_{\infty}$ algebra $\mathfrak{g}^{\varphi}$
with the same underlying vector space and new brackets $l_k^{\varphi}$ defined by
\[
l_k^{\varphi}(x_1,...,x_k)=\sum_{i\geq 0}\frac{1}{i!}l_{i+k}(\varphi^{\wedge i},x_1,...,x_k).
\]
\end{defn}
The following lemma shows that the twisting operation preserves the properties of the filtration:
\begin{lem}
Let $\mathfrak{g}$ be a complete $L_{\infty}$ algebra and $\varphi$ be a Maurer-Cartan element of $\mathfrak{g}$. Then $\mathfrak{g}^{\varphi}$ is a complete $L_{\infty}$ algebra for the
same filtration of the underlying chain complex.
\end{lem}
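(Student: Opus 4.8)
The plan is to verify, in order, three things: that the series defining the twisted brackets $l_k^{\varphi}$ converge in $\hat{\mathfrak{g}}=\mathfrak{g}$, that the given filtration $\{F_r\mathfrak{g}\}$ satisfies both defining conditions of Section 1.2 with respect to these twisted brackets, and finally that $\mathfrak{g}^{\varphi}$ is complete because its underlying filtered graded vector space is literally the same as that of $\mathfrak{g}$. The generalized Jacobi identities for $\mathfrak{g}^{\varphi}$ (i.e. that it is an $L_{\infty}$-algebra at all) are the standard consequence of $\varphi$ being Maurer-Cartan, and I would cite this from \cite{Ber} (or record that the conjugated codifferential $Q^{\varphi}$ squares to zero precisely because $\mathcal{F}(\varphi)=0$, which is now legitimate once convergence is established); the genuine content of the lemma is the filtration statement.

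First I would settle convergence, which is what makes Definition 2.4 meaningful here. Recall that the filtration satisfies: for every $r$ there is an integer $N(r)$ with $[\mathfrak{g}^{\wedge l}]\subseteq F_r\mathfrak{g}$ whenever $l>N(r)$. In the series $l_k^{\varphi}(x_1,\ldots,x_k)=\sum_{i\geq 0}\frac{1}{i!}l_{i+k}(\varphi^{\wedge i},x_1,\ldots,x_k)$, the $i$-th summand has $i+k$ entries, so as soon as $i+k>N(r)$---that is, for all but finitely many $i$---it lies in $F_r\mathfrak{g}$. Hence modulo every $F_r\mathfrak{g}$ the series is a finite sum, the partial sums are Cauchy, and the series converges in $\hat{\mathfrak{g}}=\mathfrak{g}$. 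This is the only place completeness of $\mathfrak{g}$ is genuinely used, and I expect it to be the sole subtlety; the remaining steps are bookkeeping.

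Next I would check the two filtration axioms for the twisted brackets. For compatibility, take $y\in F_r\mathfrak{g}$ and arbitrary $x_2,\ldots,x_k$: each summand $l_{i+k}(\varphi^{\wedge i},y,x_2,\ldots,x_k)$ carries the entry $y$ in one slot, so by the graded antisymmetry of Definition 1.1(1) together with the original compatibility it lies in $F_{r+1}\mathfrak{g}$. Since every summand vanishes modulo $F_{r+1}\mathfrak{g}$, so does the convergent sum, giving $[F_r\mathfrak{g},\mathfrak{g},\ldots,\mathfrak{g}]^{\varphi}\subseteq F_{r+1}\mathfrak{g}$. For the second axiom, note that when $k>N(r)$ every summand $l_{i+k}(\varphi^{\wedge i},x_1,\ldots,x_k)$ already has more than $N(r)$ entries and hence lies in $F_r\mathfrak{g}$; since each summand vanishes modulo $F_r\mathfrak{g}$, so does the sum, so the \emph{same} function $N(r)$ witnesses $[\mathfrak{g}^{\wedge l}]^{\varphi}\subseteq F_r\mathfrak{g}$ for $l>N(r)$. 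In particular the twisted brackets descend to every quotient $\mathfrak{g}/F_r\mathfrak{g}$.

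Finally, since $\mathfrak{g}^{\varphi}$ carries the same underlying graded vector space and the same filtration as $\mathfrak{g}$, its completion is $\lim \mathfrak{g}^{\varphi}/F_r\mathfrak{g}=\lim \mathfrak{g}/F_r\mathfrak{g}=\hat{\mathfrak{g}}=\mathfrak{g}=\mathfrak{g}^{\varphi}$, so $\mathfrak{g}^{\varphi}$ is complete for the same filtration, as claimed. The main obstacle is thus conceptual rather than computational: one must be careful that the twisting series live in the completion and that filtration membership passes to their limits, for which the reduction to finite sums modulo each $F_r\mathfrak{g}$ is exactly the right tool.
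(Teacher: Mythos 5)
Your proof is correct and follows essentially the same route as the paper's: verify term by term that the twisted brackets respect the compatibility condition and admit the same bound $N(r)$, then conclude completeness from the fact that the underlying filtered space is unchanged. The one place you go beyond the paper is in spelling out why the defining series for $l_k^{\varphi}$ converge (finitely many terms survive modulo each $F_r\mathfrak{g}$), a point the paper leaves implicit under the blanket appeal to completeness, and your justification of the $F_{r+1}$-membership of each summand via antisymmetry plus the original compatibility is in fact cleaner than the paper's wording.
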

\begin{proof}
Recall that the brackets of $\mathfrak{g}^{\varphi}$ are given by
\[
l_k^{\varphi}(x_1,...,x_k)=\sum_{i\geq 0}\frac{1}{i!}l_{i+k}(\varphi^{\wedge i},x_1,...,x_k).
\]
First, the filtration is compatible with the new differential $d_{\varphi}$ of $\mathfrak{g}^{\varphi}$. Indeed, this differential is defined by
\begin{eqnarray*}
d_{\varphi}(x) & = & \sum_{k\geq 0}\frac{1}{k!}[\varphi^{\wedge k},x] \\
  & = & d_{\mathfrak{g}}(x) + \sum_{k\geq 1}\frac{1}{k!}[\varphi^{\wedge k},x]
\end{eqnarray*}
where $d_{\mathfrak{g}}$ is the differential of $\mathfrak{g}$. If $x\in F_r\mathfrak{g}$, then $d_{\mathfrak{g}}(x)\in F_r\mathfrak{g}$ because $d_{\mathfrak{g}}$ is compatible with the filtration of $\mathfrak{g}$,
and for every integer $k$ the term $[\varphi^{\wedge k},x]$ lies in $F_{r+1}$ because $\mathfrak{g}$ is complete, hence $d_{\varphi}(x)\in F_r\mathfrak{g}$.

Secondly, for any $x_1,...,x_k\in \mathfrak{g}$, if one of the $x_j$, $1\leq j\leq k$, lies in $F_r\mathfrak{g}$ for a certain integer $r$ then
for every integer $i$ the term $l_{i+k}(\varphi^{\wedge i},x_1,...,x_k)$ lies in $F_{r+1}\mathfrak{g}$ by completeness of $\mathfrak{g}$ so
$l_k^{\varphi}(x_1,...,x_k)\in F_{r+1}\mathfrak{g}$.

Thirdly, for every integer $r$ there exists an integer $N(r)$ such that $l_n{\varphi}(x_1,...,x_n)\in F_r\mathfrak{g}$ for every $n>N(r)$, so
for $k>N(r)$ every term $l_{i+k}(\varphi^{\wedge i},x_1,...,x_k)$ lies in $F_r\mathfrak{g}$, hence $l_k^{\varphi}(x_1,...,x_k)\in F_r\mathfrak{g}$.

Finally we obviously still have an isomorphism $\mathfrak{g}^{\varphi}\cong lim_r \mathfrak{g}^{\varphi}/F_r\mathfrak{g}$ where $F_r\mathfrak{g}$ is equipped with the differential $d_{\varphi}$.

\end{proof}

\subsubsection{Extension by algebras}

To consider Maurer-Cartan elements of extensions of $\mathfrak{g}$ by artinian algebras or cdgas $\mathfrak{g}\otimes A$,
we need to complete this tensor product with respect to the filtration $F_r(\mathfrak{g}\otimes A)=F_r\mathfrak{g}\otimes A$.
We denote by $\mathfrak{g}\hat{\otimes}A$ this completed tensor product.
The completion process is also functorial in $A$, so we obtain functors
\[
MC(\mathfrak{g}\hat{\otimes} -) :Art_{\mathbb{K}}\rightarrow Set
\]
and
\[
MC(\mathfrak{g}\hat{\otimes} -) :CDGA_{\mathbb{K}}\rightarrow Set.
\]

In the case where $\mathfrak{g}$ is complete, the functor of artinian algebras does not need the completed tensor product by the following arguments:
\begin{lem}
Let $A$ be a local artinian commutative $\mathbb{K}$-algebra with maximal ideal $m_A$.
There exists an integer $n(A)\in\mathbb{N}$ such that $m_A^{n(A)}=0$.
\end{lem}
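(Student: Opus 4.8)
The plan is to exploit the two finiteness conditions that an object of $Art_{\mathbb{K}}$ carries: the descending chain condition on ideals, and the fact that $A$ is a finitely generated $\mathbb{K}$-algebra, hence noetherian by the Hilbert basis theorem. First I would form the descending chain of powers of the maximal ideal
\[
m_A\supseteq m_A^2\supseteq m_A^3\supseteq\cdots
\]
Since $A$ satisfies the descending chain condition, this chain must stabilize: there is an integer $n=n(A)$ with $m_A^n=m_A^{n+1}$. Writing $I=m_A^n$, this says precisely that $m_A\cdot I=I$.

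Next I would invoke Nakayama's lemma. Because $A$ is noetherian, the ideal $I$ is a finitely generated $A$-module, and $A$ is local with maximal ideal $m_A$. The relation $m_A I=I$ together with Nakayama then forces $I=0$, that is, $m_A^n=0$, which is the assertion. The argument is classical and I expect no real obstacle; the only point requiring care is making sure both hypotheses of Nakayama are in place, namely that $I$ is finitely generated (which is where noetherianity, and ultimately the finite generation of $A$ over $\mathbb{K}$, enters) and that $A$ is local (which is built into the definition of $Art_{\mathbb{K}}$).

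Alternatively, if one wishes to avoid Nakayama, one can run the standard minimal-counterexample argument directly from the descending chain condition. Supposing $I=m_A^n\neq 0$, one notes that $I^2=m_A^{2n}=m_A^n=I$, so the family of ideals $J$ with $JI\neq 0$ is nonempty and thus has a minimal element $J_0$. Choosing $x\in J_0$ with $xI\neq 0$ gives $(x)=J_0$ by minimality, and since $(xI)I=xI\neq 0$ with $xI\subseteq(x)$ one gets $xI=(x)$ again by minimality; hence $x=xa$ for some $a\in m_A$, so $x(1-a)=0$, and as $1-a$ is a unit in the local ring $A$ this yields $x=0$, a contradiction. I would present the Nakayama route as the main line and treat this as the self-contained variant.
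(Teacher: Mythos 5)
Your main line of argument is exactly the paper's own proof: stabilize the chain $m_A\supseteq m_A^2\supseteq\cdots$ using the descending chain condition to get $m_A\cdot m_A^{n(A)}=m_A^{n(A)}$, then apply Nakayama's lemma, using finite generation of $A$ (hence of the ideal $m_A^{n(A)}$ as an $A$-module) and locality so that $J(A)=m_A$. The Nakayama-free minimal-counterexample variant you sketch is a correct bonus, but your primary route coincides with the paper's.
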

\begin{proof}
The algebra $A$ is local, so it admits a unique maximal ideal $m_A$. Let us consider the totally ordered subset of ideals of $A$ defined by
\[
...\subseteq m_A^k\subseteq ...\subseteq m_A^2\subseteq m_A.
\]
The algebra $A$ is artinian, so it satisfies the descending chain condition on ideals. It means that the sequence above is stationary for a certain integer, that is,
there exists an integer $n(A)$ such that $m_A^n=m_A^{n(A)}$ for every $n>n(A)$. In particular, we have
\[
m_A.m_A^{n(A)}=m_A^{n(A)}.
\]
The Jacobson radical of $A$, noted $J(A)$, is the intersection of all maximal ideals of $A$. Since $A$ is local, we have $J(A)=m_A$.
Moreover, the algebra $A$ is finitely generated, so $m_A^{n(A)}$ is a finitely generated $A$-module. By Nakayama's Lemma (Proposition 2.6 in \cite{AM}), the equality $J(A).m_A^{n(A)}=m_A^{n(A)}$ implies that $m_A^{n(A)}=0$.
\end{proof}
We deduce:
\begin{lem}
Let $\mathfrak{g}$ be a complete $L_{\infty}$ algebra and $A$ be a local artinian commutative $\mathbb{K}$-algebra with maximal ideal $m_A$. Then $\mathfrak{g}\hat{\otimes}m_A=\mathfrak{g}\otimes m_A$.
\end{lem}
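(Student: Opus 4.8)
The plan is to unwind both sides as sequential limits and exploit that $m_A$ is a finite-dimensional $\mathbb{K}$-vector space. First I would recall that by definition $\mathfrak{g}\hat{\otimes}m_A$ is the completion of $\mathfrak{g}\otimes m_A$ with respect to the filtration $F_r(\mathfrak{g}\otimes m_A)=F_r\mathfrak{g}\otimes m_A$, that is
\[
\mathfrak{g}\hat{\otimes}m_A=\lim_r\,(\mathfrak{g}\otimes m_A)/(F_r\mathfrak{g}\otimes m_A).
\]
Since $\mathbb{K}$ is a field, the functor $-\otimes_{\mathbb{K}}m_A$ is exact, so each term of this tower is naturally isomorphic to $(\mathfrak{g}/F_r\mathfrak{g})\otimes m_A$, whence
\[
\mathfrak{g}\hat{\otimes}m_A\cong\lim_r\,(\mathfrak{g}/F_r\mathfrak{g})\otimes m_A.
\]

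Next I would establish that $m_A$ is finite-dimensional over $\mathbb{K}$. By the previous lemma there is an integer $n(A)$ with $m_A^{n(A)}=0$, so $A$ carries the finite filtration $A\supseteq m_A\supseteq\dots\supseteq m_A^{n(A)}=0$. As $A$ is noetherian, $m_A$ is finitely generated, hence each subquotient $m_A^i/m_A^{i+1}$ is a finitely generated module over $A/m_A\cong\mathbb{K}$ and therefore finite-dimensional; summing over $i$ shows that $A$, and a fortiori $m_A$, is a finite-dimensional $\mathbb{K}$-vector space.

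The key step is then to commute the limit with the tensor product. Choosing a basis of $m_A$ yields a natural isomorphism $V\otimes m_A\cong V^{\oplus d}$ for every chain complex $V$, where $d=\dim_{\mathbb{K}}m_A$; since finite direct sums commute with all limits in $Ch_{\mathbb{K}}$, this gives
\[
\lim_r\,(\mathfrak{g}/F_r\mathfrak{g})\otimes m_A\cong\Big(\lim_r\,\mathfrak{g}/F_r\mathfrak{g}\Big)\otimes m_A.
\]
Finally I would invoke the completeness hypothesis on $\mathfrak{g}$, namely $\lim_r\mathfrak{g}/F_r\mathfrak{g}=\hat{\mathfrak{g}}=\mathfrak{g}$, to identify the right-hand side with $\mathfrak{g}\otimes m_A$ and conclude.

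The only genuine content lies in the finite-dimensionality of $m_A$: without it the limit would not commute with $-\otimes m_A$ and the two tensor products could differ, so this is exactly where the artinian hypothesis (through the previous lemma) is essential. The remaining identifications are the exactness of $-\otimes_{\mathbb{K}}m_A$ and the elementary fact that finite direct sums commute with limits, which I expect to be routine.
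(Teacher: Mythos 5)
Your proof is correct, and it takes a genuinely different route from the paper's. The paper argues concretely with elements: it fixes generators $u_1,\dots,u_k$ of $m_A$, uses $m_A^{n(A)}=0$ (Lemma 2.3) to see that elements of $\mathfrak{g}\hat{\otimes}m_A$ are sequences of polynomials in the $u_i$ of uniformly bounded degree, and then identifies these with $\mathfrak{g}\otimes m_A$ using completeness of $\mathfrak{g}$ --- essentially the same style of explicit description it later uses for $\mathfrak{g}\hat{\otimes}\Omega_1$ in Proposition 2.7. You instead isolate the structural mechanism: from $m_A^{n(A)}=0$ together with noetherianity (which indeed holds, either by Hopkins' theorem or because $A$ is a finitely generated algebra over a field) you deduce $\dim_{\mathbb{K}}m_A<\infty$, and then the whole lemma reduces to the facts that $-\otimes_{\mathbb{K}}m_A$ is exact, that tensoring with a $d$-dimensional space is a $d$-fold direct sum, and that finite direct sums commute with limits, so
\[
\mathfrak{g}\hat{\otimes}m_A\cong\lim_r\,(\mathfrak{g}/F_r\mathfrak{g})\otimes m_A\cong\Bigl(\lim_r\,\mathfrak{g}/F_r\mathfrak{g}\Bigr)\otimes m_A\cong\mathfrak{g}\otimes m_A.
\]
Your version buys two things: it avoids the somewhat delicate bookkeeping with polynomial sequences (where the paper's phrasing about degrees is in fact a little loose), and it makes transparent that the artinian hypothesis enters only through finite-dimensionality of $m_A$ --- the statement holds verbatim with $m_A$ replaced by any finite-dimensional vector space. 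What the paper's approach buys in exchange is an explicit description of elements of the completed tensor product, which it reuses in later arguments; also note that your chain of isomorphisms is assembled from natural maps, so it is the canonical comparison morphism $\mathfrak{g}\otimes m_A\rightarrow\mathfrak{g}\hat{\otimes}m_A$ and the identification is one of $L_{\infty}$ algebras, not merely of graded vector spaces --- a point worth stating explicitly since the lemma is used at the level of Maurer-Cartan functors.
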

\begin{proof}
There exists an integer $k$ such that $m_A=(u_1,...,u_k)\subset A$ since $A$ is finitely generated.
Moreover, according to Lemma 2.3, there exists an integer $n$ such that $u_1^n=...=u_k^n=0$.
The space $\mathfrak{g}\hat{\otimes}m_A$ consists of sequences of polynomials $\{p_r\}_{r\in\mathbb{N}}$ of $k$ variables $u_1,...,u_k$ such that $p_r\in F_{r-1}\mathfrak{g}[u_1,...,u_k]$
and $deg(p_r)\leq deg(p_{r+1})$. This is equivalent to consider polynomials in $lim_r\mathfrak{g}/F_r\mathfrak{g}[u_1,...,u_k]$ of degree $\leq n$, which form
exactly $\mathfrak{g}\otimes m_A$ since $\mathfrak{g}$ is complete.
\end{proof}
\begin{cor}
Let $\mathfrak{g}$ be a complete $L_{\infty}$ algebra and $A$ be a local artinian commutative $\mathbb{K}$-algebra. Then $\mathfrak{g}\hat{\otimes}A=\mathfrak{g}\otimes A$.
\end{cor}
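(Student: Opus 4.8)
The plan is to reduce the statement to Lemma 2.5 by exploiting the vector space splitting of an artinian algebra. Recall from Section 1.3 that any $A\in Art_{\mathbb{K}}$ decomposes as $A=\mathbb{K}\oplus m_A$ as a $\mathbb{K}$-vector space, where $m_A$ is the maximal ideal. Tensoring with $\mathfrak{g}$ over $\mathbb{K}$ then yields a direct sum decomposition of chain complexes
\[
\mathfrak{g}\otimes A=(\mathfrak{g}\otimes\mathbb{K})\oplus(\mathfrak{g}\otimes m_A)=\mathfrak{g}\oplus(\mathfrak{g}\otimes m_A).
\]
First I would check that the filtration $F_r(\mathfrak{g}\otimes A)=F_r\mathfrak{g}\otimes A$ respects this splitting. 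Indeed, since the filtration only acts on the $\mathfrak{g}$ factor, we have $F_r\mathfrak{g}\otimes A=(F_r\mathfrak{g}\otimes\mathbb{K})\oplus(F_r\mathfrak{g}\otimes m_A)=F_r\mathfrak{g}\oplus F_r(\mathfrak{g}\otimes m_A)$, so the decomposition is filtered.

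The key step is then to observe that completion, being the sequential limit $\lim_r(-)/F_r$, commutes with this \emph{finite} direct sum. Since each quotient splits as $(\mathfrak{g}\otimes A)/F_r(\mathfrak{g}\otimes A)\cong\mathfrak{g}/F_r\mathfrak{g}\oplus(\mathfrak{g}\otimes m_A)/F_r(\mathfrak{g}\otimes m_A)$ compatibly with the transition maps, passing to the limit gives
\[
\mathfrak{g}\hat{\otimes}A=\hat{\mathfrak{g}}\oplus(\mathfrak{g}\hat{\otimes}m_A).
\]
Now I would invoke the two inputs. By hypothesis $\mathfrak{g}$ is complete, so $\hat{\mathfrak{g}}=\mathfrak{g}$; and by Lemma 2.5 we have $\mathfrak{g}\hat{\otimes}m_A=\mathfrak{g}\otimes m_A$. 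Substituting both identifications yields
\[
\mathfrak{g}\hat{\otimes}A=\mathfrak{g}\oplus(\mathfrak{g}\otimes m_A)=\mathfrak{g}\otimes(\mathbb{K}\oplus m_A)=\mathfrak{g}\otimes A,
\]
which is the desired conclusion.

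I expect no serious obstacle here, as the corollary is essentially a bookkeeping consequence of Lemma 2.5 together with the splitting $A=\mathbb{K}\oplus m_A$. The only point deserving genuine care is the compatibility of the limit with the direct sum; this is immediate for a finite sum and because the transition maps of the two towers are the independent transition maps of $\{\mathfrak{g}/F_r\mathfrak{g}\}_r$ and $\{(\mathfrak{g}\otimes m_A)/F_r(\mathfrak{g}\otimes m_A)\}_r$. One should also confirm that the $L_{\infty}$ and chain complex structures match under this identification, but this follows since the brackets $l_k^A$ and the differential are defined componentwise on the tensor factors and hence respect the splitting.
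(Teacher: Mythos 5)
Your proof is correct and follows essentially the same route the paper intends: the corollary is stated there without explicit proof (``We deduce:'') as an immediate consequence of the preceding lemma ($\mathfrak{g}\hat{\otimes}m_A=\mathfrak{g}\otimes m_A$) via the vector space splitting $A=\mathbb{K}\oplus m_A$, completeness of $\mathfrak{g}$, and the fact that completion commutes with finite direct sums, which is precisely what you spell out. One pedantic remark: the splitting is one of filtered chain complexes rather than of $L_{\infty}$-algebras, since mixed brackets such as $[x\otimes 1,y\otimes a]=[x,y]\otimes a$ land in $\mathfrak{g}\otimes m_A$; but as the completion is computed from the underlying filtered complex alone, this does not affect your argument.
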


\subsection{The simplicial Maurer-Cartan space}

We have a functor
\[
MC_{\bullet}:dgL_{\infty}^{filt}\rightarrow sSet
\]
from filtered $L_{\infty}$-algebras to
simplicial sets defined by $MC_{\bullet}(\mathfrak{g})=MC(\mathfrak{g}\hat{\otimes}\Omega_{\bullet})$, where $\Omega_{\bullet}$ is
the Sullivan cdga of de Rham polynomial forms on the standard simplex $\Delta^{\bullet}$ (see \cite{Sul}).
This simplicial cdga is defined in every simplicial dimension $n$ by
\[
\Omega_n=\mathbb{K}[t_0,...,t_n,dt_0,...,dt_n]/(\sum t_i -1, \sum dt_i)
\]
where the $t_i$ are of degree $0$, the $dt_i$ are of degree $1$, and the differential sends each $t_i$ to $dt_i$ and each $dt_i$ to $0$.
The simplicial set $MC_{\bullet}(\mathfrak{g})$ is called the simplicial Maurer-Cartan set of $\mathfrak{g}$.
It is a limit of Maurer-Cartan spaces of nilpotent $L_{\infty}$-algebras:
\begin{eqnarray*}
MC_{\bullet}(\mathfrak{g}) & := & MC_{\bullet}(\hat{\mathfrak{g}})\\
 & = & lim_r MC(\hat{\mathfrak{g}}/F_r\hat{\mathfrak{g}}\hat{\otimes}\Omega^{\bullet}) \\
 & = & lim_r MC_{\bullet}(\hat{\mathfrak{g}}/F_r\hat{\mathfrak{g}})\\
 & = & lim_r MC_{\bullet}(\mathfrak{g}/F_r\mathfrak{g})\\
\end{eqnarray*}

The set $MC(\mathfrak{g}\hat{\otimes} \Omega_1)$ gives the homotopies between the Maurer-Cartan elements of $\mathfrak{g}$, and we define by
\[
\mathcal{MC}(\mathfrak{g})=\pi_0 MC_{\bullet}(\mathfrak{g})
\]
the moduli set of Maurer-Cartan elements of $\mathfrak{g}$.

\begin{prop}
For every filtered $L_{\infty}$-algebra $\mathfrak{g}$, the simplicial set $MC_{\bullet}(\hat{\mathfrak{g}})$ is a Kan complex.
\end{prop}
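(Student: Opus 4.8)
The plan is to deduce the Kan property from the tower presentation
\[
MC_{\bullet}(\hat{\mathfrak{g}}) = \lim_r MC_{\bullet}(\mathfrak{g}/F_r\mathfrak{g})
\]
obtained just above, together with Getzler's results on nilpotent $L_{\infty}$-algebras. First I would recall from Section 1.2 that the filtration axioms force each quotient $\mathfrak{g}/F_r\mathfrak{g}$ to be a nilpotent $L_{\infty}$-algebra in the sense of Definition 4.2 of \cite{Get}. Since Getzler proves that the Maurer-Cartan functor takes nilpotent $L_{\infty}$-algebras to Kan complexes, each stage $MC_{\bullet}(\mathfrak{g}/F_r\mathfrak{g})$ of the tower is a Kan complex, which handles the vertices of the tower individually.

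Next I would check that the connecting maps of the tower are fibrations. Since $F_{r+1}\mathfrak{g} \subseteq F_r\mathfrak{g}$, the canonical projections $\mathfrak{g}/F_{r+1}\mathfrak{g} \twoheadrightarrow \mathfrak{g}/F_r\mathfrak{g}$ are surjective morphisms of nilpotent $L_{\infty}$-algebras, and Getzler's invariance theorem asserts that $MC_{\bullet}$ sends such a surjection to a Kan fibration. Hence every connecting map
\[
MC_{\bullet}(\mathfrak{g}/F_{r+1}\mathfrak{g}) \longrightarrow MC_{\bullet}(\mathfrak{g}/F_r\mathfrak{g})
\]
is a fibration of simplicial sets, so that $MC_{\bullet}(\hat{\mathfrak{g}})$ is exhibited as the limit of a tower of fibrations between Kan complexes.

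Finally I would invoke the standard fact that the limit of such a tower is again a Kan complex, and to keep the argument self-contained I would fill a horn $\Lambda^n_k \hookrightarrow \Delta^n$ by induction on $r$. The bottom stage is Kan, so a filler exists there; and given a filler into the stage-$r$ space, the square built from the prescribed horn into stage $r+1$, this filler, the horn inclusion, and the fibration $MC_{\bullet}(\mathfrak{g}/F_{r+1}\mathfrak{g}) \to MC_{\bullet}(\mathfrak{g}/F_r\mathfrak{g})$ admits a diagonal lift, because horn inclusions are anodyne and Kan fibrations have the right lifting property against them. This yields a system of fillers compatible along the tower, whose degreewise limit in $Set$ is the required filler in $MC_{\bullet}(\hat{\mathfrak{g}})$. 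The main obstacle is precisely this coherence: a naive choice of filler at each stage need not be compatible with the projections, and it is exactly the fibration property established in the previous step that allows one to correct the lifts stage by stage. I therefore expect the only genuine content to lie in verifying that Getzler's surjection-to-fibration result applies verbatim to the projections $\mathfrak{g}/F_{r+1}\mathfrak{g} \twoheadrightarrow \mathfrak{g}/F_r\mathfrak{g}$; once that is granted, the Kan condition for the limit follows formally from the lifting property against anodyne maps.
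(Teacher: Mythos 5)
Your proposal is correct and follows essentially the same route as the paper: both exhibit $MC_{\bullet}(\hat{\mathfrak{g}})$ as the limit of the tower $MC_{\bullet}(\mathfrak{g}/F_{r+1}\mathfrak{g})\twoheadrightarrow MC_{\bullet}(\mathfrak{g}/F_r\mathfrak{g})$, use the filtration axioms to see that each quotient is nilpotent in Getzler's sense, and invoke Getzler's result that surjections of nilpotent $L_{\infty}$-algebras yield Kan fibrations, so that the limit is fibrant over the point. Your explicit stagewise horn-filling induction is simply an unwinding of the standard fact the paper cites in one line (the transfinite composite of a tower of fibrations is a fibration), so the two arguments coincide in substance.
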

\begin{proof}
For every $r$, the inclusion $F_{r+1}\mathfrak{g}\subseteq F_r\mathfrak{g}$ induces a surjection of nilpotent
$L_{\infty}$-algebras $\mathfrak{g}/F_{r+1}\mathfrak{g}\twoheadrightarrow \mathfrak{g}/F_r\mathfrak{g}$, hence a fibration of Kan complexes
$MC_{\bullet}(\mathfrak{g}/F_{r+1}\mathfrak{g})\twoheadrightarrow MC_{\bullet}(\mathfrak{g}/F_r\mathfrak{g})$ according to \cite{Get}.
We thus obtain a tower of fibrations of Kan complexes
\[
MC_{\bullet}(\mathfrak{g})\rightarrow...\twoheadrightarrow MC_{\bullet}(\mathfrak{g}/F_{r+1}\mathfrak{g})\twoheadrightarrow MC_{\bullet}(\mathfrak{g}/F_r\mathfrak{g})
\twoheadrightarrow...\twoheadrightarrow pt
\]
whose transfinite composite gives a Kan fibration from $MC_{\bullet}(\mathfrak{g})$ to the point,
that is to say, $MC_{\bullet}(\mathfrak{g})$ is a Kan complex.
\end{proof}

\subsection{Homotopy versus gauge in the Lie case}

Gauge equivalence and homotopy equivalence for nilpotent dg Lie algebras have been studied in \cite{Man1},
and in \cite{ScS} for applications to the classification of rational homotopy types.
The goal of this section is to generalize the correspondence between gauge equivalences and homotopy equivalences to filtered dg Lie algebras.
For every filtered dg Lie algebra $\mathfrak{g}$ and a filtration $\mathfrak{g}=F_0\mathfrak{g}\supseteq F_1\mathfrak{g}...$,
we consider its Maurer-Cartan elements as the Maurer-Cartan elements of
its completion $\hat{\mathfrak{g}}$ (to ensure convergent series). Hence we can assume here, without loss of generality, that $\mathfrak{g}$ is complete with respect
to its filtration.
Homotopies between two Maurer-Cartan elements $x,y\in \mathfrak{g}^1$ are Maurer-Cartan elements of $\mathfrak{g}\hat{\otimes}\Omega_1$.
Let us explicit the structure of this complete dg Lie algebra. Let $\mathfrak{g}\{t\}$ be the complete dg Lie algebra of formal series $f\in \mathfrak{g}[[t]]$
of the form
\[
f(t)=\sum_{r=1}^{\infty}f_rt^r
\]
where $f_r\in F_{m_r}\mathfrak{g}$ for every integer $r$ and $\{m_r\}_{r\in\mathbb{N}}$ is an increasing sequence of integers going to $+\infty$ when $r$ goes to $+\infty$.
This Lie algebra has been introduced in Appendix C of \cite{Dol}.
\begin{prop}
We have
\[
\mathfrak{g}\hat{\otimes}\Omega_1=\mathfrak{g}\{t\}\oplus \mathfrak{g}\{t\}dt.
\]
For every $f_0+f_1dt\in \mathfrak{g}\{t\}\oplus \mathfrak{g}\{t\}dt$, the differential is given by
\[
d(f_0+f_1dt)=\delta_{\mathfrak{g}}f_0+\delta_{\mathfrak{g}}f_1dt+\frac{df_0}{dt}dt
\]
where $\delta_{\mathfrak{g}}$ is the differential of $\mathfrak{g}$.
For every $f_0+f_1dt,\tilde{f}_0+\tilde{f}_1dt\in \mathfrak{g}\{t\}\oplus \mathfrak{g}\{t\}dt$, the Lie bracket is given by
\[
[f_0+f_1dt,\tilde{f}_0+\tilde{f}_1dt]=[f_0,\tilde{f}_0]+[f_0,\tilde{f}_1]dt+(-1)^{|\tilde{f}_0||f_1|}[\tilde{f}_0,f_1]dt.
\]
\end{prop}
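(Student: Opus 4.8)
The plan is to reduce everything to an explicit description of $\Omega_1$ and to compute levelwise before passing to the completion. First I would make $\Omega_1$ concrete: setting $t=t_1$, the relations $t_0+t_1=1$ and $dt_0+dt_1=0$ give $t_0=1-t$ and $dt_0=-dt$, so that $\Omega_1=\mathbb{K}[t]\otimes\Lambda(dt)=\mathbb{K}[t]\oplus\mathbb{K}[t]dt$ as a graded vector space, with $t$ in degree $0$, $dt$ in degree $1$, internal differential determined by $d_{\Omega_1}(t)=dt$ and $d_{\Omega_1}(dt)=0$, and product subject to $dt\wedge dt=0$. In particular $\Omega_1$ splits as the direct sum of its degree-$0$ part $\mathbb{K}[t]$ and its degree-$1$ part $\mathbb{K}[t]dt$. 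Since $\mathfrak{g}$ is assumed complete, the filtration lives only on the $\mathfrak{g}$ factor, so $\mathfrak{g}\hat{\otimes}\Omega_1=\lim_r(\mathfrak{g}/F_r\mathfrak{g})\otimes\Omega_1$, and because tensoring commutes with the finite direct sum and limits commute with finite direct sums, the computation distributes to give
\[
\mathfrak{g}\hat{\otimes}\Omega_1=\Big(\lim_r(\mathfrak{g}/F_r\mathfrak{g})[t]\Big)\oplus\Big(\lim_r(\mathfrak{g}/F_r\mathfrak{g})[t]\Big)dt.
\]

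It then remains to identify $\lim_r(\mathfrak{g}/F_r\mathfrak{g})[t]$ with $\mathfrak{g}\{t\}$, and this matching is the crux of the argument. An element of this limit is a compatible family of polynomials $p_r\in(\mathfrak{g}/F_r\mathfrak{g})[t]$; reading off the coefficient of each power $t^k$ produces a compatible family in $\lim_r\mathfrak{g}/F_r\mathfrak{g}=\mathfrak{g}$, hence a formal series $\sum_k c_k t^k$ with $c_k\in\mathfrak{g}$. The constraint that each $p_r$ be an honest polynomial translates into the condition that for every $r$ all but finitely many $c_k$ lie in $F_r\mathfrak{g}$. I would then verify that this is exactly equivalent to the defining condition of $\mathfrak{g}\{t\}$, namely that $c_k\in F_{m_k}\mathfrak{g}$ for an increasing sequence $m_k\to+\infty$: in one direction $m_k\to\infty$ forces eventual membership in any fixed $F_r\mathfrak{g}$, while in the other direction, setting $m_k$ to be the largest filtration level reached by all coefficients of index $\geq k$ produces such an exhausting sequence. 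The same argument applied to the $dt$-summand yields $\mathfrak{g}\{t\}dt$, which gives the claimed isomorphism of graded vector spaces.

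The differential is then the restriction of the tensor-product differential $\delta_{\mathfrak{g}}\otimes 1\pm 1\otimes d_{\Omega_1}$, extended by continuity to the completion. On a term $f_0\in\mathfrak{g}\{t\}$ the first summand gives $\delta_{\mathfrak{g}}f_0$, while $1\otimes d_{\Omega_1}$ acts by the formal derivative $t^k\mapsto k\,t^{k-1}dt$ (up to the Koszul sign), producing $\frac{df_0}{dt}\,dt$; on a term $f_1dt$ the form factor is closed, so only $\delta_{\mathfrak{g}}f_1\,dt$ survives. Collecting these yields
\[
d(f_0+f_1dt)=\delta_{\mathfrak{g}}f_0+\delta_{\mathfrak{g}}f_1\,dt+\frac{df_0}{dt}\,dt.
\]

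For the bracket I would specialize the bracket of the dg Lie algebra $\mathfrak{g}\otimes\Omega_1$, namely $[a\otimes\alpha,b\otimes\beta]=(-1)^{|\alpha||b|}[a,b]\otimes\alpha\beta$, to form factors $\alpha,\beta\in\{t^k,\,t^kdt\}$, again extending by continuity. Because $dt\wedge dt=0$, the bracket of the two $dt$-components vanishes, leaving three terms: the $\mathbb{K}[t]$-against-$\mathbb{K}[t]$ term gives $[f_0,\tilde{f}_0]$; the mixed term $[f_0,\tilde{f}_1dt]$ carries no sign, since the form factor of $f_0$ sits in degree $0$, and gives $[f_0,\tilde{f}_1]dt$; and the term $[f_1dt,\tilde{f}_0]$ gives the last summand after moving $dt$ to the right and rewriting through graded antisymmetry, which accounts for the factor $(-1)^{|\tilde{f}_0||f_1|}$ in front of $[\tilde{f}_0,f_1]dt$. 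I expect the genuine obstacle to lie entirely in the second paragraph, that is, in matching the levelwise limit of the polynomial algebras over $\mathfrak{g}/F_r\mathfrak{g}$ with the power series description $\mathfrak{g}\{t\}$, and in checking that the ``eventually in every $F_r$'' condition coming from polynomiality at each stage is equivalent to the existence of an increasing exhausting sequence $m_k\to\infty$. Once the tensor-product structure maps are transported through this identification, the differential and the bracket follow by direct computation, modulo the Koszul sign bookkeeping noted above.
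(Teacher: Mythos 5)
Your proposal is correct and follows essentially the same route as the paper's proof: the same explicit presentation $\Omega_1=\mathbb{K}[t]\oplus\mathbb{K}[t]dt$, the same reduction of $\mathfrak{g}\hat{\otimes}\Omega_1$ to $\lim_r(\mathfrak{g}/F_r\mathfrak{g})[t]\oplus\lim_r(\mathfrak{g}/F_r\mathfrak{g})[t]\,dt$ using completeness, the identification of this limit with $\mathfrak{g}\{t\}$, and the same componentwise computation of the differential and the bracket (including the vanishing of the $dt\wedge dt$ term and the antisymmetry sign). The only immaterial difference is that you perform the identification with $\mathfrak{g}\{t\}$ by reading off the coefficient of each $t^k$ and checking that ``all but finitely many coefficients in each $F_r\mathfrak{g}$'' is equivalent to the exhausting-sequence condition, whereas the paper telescopes the compatible family as $p_r=p_{r-1}+\tilde{p}_r$ with $\tilde{p}_r\in F_{r-1}\mathfrak{g}[t]$; both give the same characterization of the formal series.
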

\begin{proof}
The algebra $\Omega_1$ is defined by $\Omega_1=\mathbb{K}[t]\oplus\mathbb{K}[t]dt$ where $t$ is a degree $0$ generator, $\mathbb{K}[t]$ is the ring of polynomials in $t$,
and $dt$ is a degree $1$ generator such that $dt^2=0$. The differential is defined by sending $t$ to $dt$ and $dt$ to $0$.
Using the canonical isomorphism $\mathfrak{g}\stackrel{\cong}{\rightarrow}lim_r\mathfrak{g}/F_r\mathfrak{g}$, we have
\[
\mathfrak{g}\hat{\otimes}\Omega_1\cong lim_r\mathfrak{g}[t]/F_r\mathfrak{g}[t]\oplus lim_r\mathfrak{g}[t]/F_r\mathfrak{g}[t] dt.
\]
The elements of $lim_r\mathfrak{g}[t]/F_r\mathfrak{g}[t]$ are sequences of polynomials $\{p_r\}_{r\in\mathbb{N}}$ where $p_r\in \mathfrak{g}/F_r\mathfrak{g}(t)$, such that the canonical projection
$\mathfrak{g}/F_r\mathfrak{g}[t]\twoheadrightarrow \mathfrak{g}/F_{r-1}\mathfrak{g}[t]$ sends $p_r$ to $p_{r-1}$. Firstly, this implies that the degree of $p_r$ increases with $r$.
Secondly, this implies that for every integer $r$, the polynomial $p_r$ can be written as $p_r=p_{r-1}+\tilde{p}_r$ where $\tilde{p_r}\in F_{r-1}\mathfrak{g}[t]$.
Applying this formula recursively, we see that elements of $lim_r\mathfrak{g}[t]/F_r\mathfrak{g}[t]$ are sequences of polynomials $\{p_r\}_{r\in\mathbb{N}}$ such that $p_r\in F_{r-1}\mathfrak{g}[t]$ and
$deg(p_r)\geq deg(p_{r-1})$. The limit of such a sequence is a formal series of the desired form, so $lim_r\mathfrak{g}[t]/F_r\mathfrak{g}[t]=\mathfrak{g}\{t\}$.

The differential of $\mathfrak{g}\hat{\otimes}\Omega_1$ is given by the componentwise differential on $\mathfrak{g}\{t\}\oplus \mathfrak{g}\{t\}dt$,
which applies the differential of $\mathfrak{g}$ to each term of the formal series and sends $t$ to $dt$. Given $f_0+f_1dt\in \mathfrak{g}\{t\}\oplus \mathfrak{g}\{t\}dt$,
the differential of $f_0$ is $\delta_{\mathfrak{g}}f_0+\frac{df_0}{dt}dt$, and the differential of $f_1dt$ is $\delta_{\mathfrak{g}}f_1dt+\frac{df_1}{dt}dt^2=\delta_{\mathfrak{g}}f_1dt$
since $dt^2=0$.

Now let $f_0+f_1dt,\tilde{f}_0+\tilde{f}_1dt\in \mathfrak{g}\{t\}\oplus \mathfrak{g}\{t\}dt$. The Lie bracket $[f_0+f_1dt,\tilde{f}_0+\tilde{f}_1dt]$ gives componentwise
\begin{eqnarray*}
[f_0+f_1dt,\tilde{f}_0+\tilde{f}_1dt] & = & [f_0,\tilde{f}_0]+[f_0,\tilde{f}_1dt]+[f_1dt,\tilde{f}_0]+[f_1dt,\tilde{f_1}dt] \\
 & = & [f_0,\tilde{f}_0]+[f_0,\tilde{f}_1]dt-[f_1,\tilde{f}_0]dt+[f_1,\tilde{f_1}]dt^2 \\
 & = & [f_0,\tilde{f}_0]+[f_0,\tilde{f}_1]dt+(-1)^{|\tilde{f}_0||f_1|}[\tilde{f}_0,f_1]dt.
\end{eqnarray*}
\end{proof}
Note that instead of the polynomials in $t$ and $dt$ of the nilpotent case (see section 5 of \cite{Man})
our homotopies are formal power series.
These homotopies define a moduli set of Maurer-Cartan elements $\mathcal{MC}(\mathfrak{g})=\pi_0MC_{\bullet}(\mathfrak{g})$.

On the other hand, the degree $0$ part $\mathfrak{g}^0$ of $\mathfrak{g}$ forms a pronilpotent Lie algebra, that is, the inverse
limit of a tower of nilpotent Lie algebras. Here this tower is given by the quotients $\mathfrak{g}^0/F_r\mathfrak{g}^0$ induced by the filtration of $\mathfrak{g}$ and the
projections $\mathfrak{g}^0/F_r\mathfrak{g}^0\twoheadrightarrow \mathfrak{g}^0/F_{r-1}\mathfrak{g}^0$.
We define a group structure on the set of elements of $\mathfrak{g}^0$ by the Baker-Campbell-Hausdorff formula
\[
x\bullet y = BCH(x,y)
\]
which is a convergent series because of the pronilpotence of $\mathfrak{g}^0$, hence exponentiates $\mathfrak{g}^0$ to a
prounipotent algebraic group $exp(\mathfrak{g}^0)$. This group is the gauge group of $\mathfrak{g}$,
and acts on the set of Maurer-Cartan elements: for every $\xi\in \mathfrak{g}^0$ and $\tau\in MC(\mathfrak{g})$,
we have
\[
exp(\xi).\tau=e^{[\xi,-]}(\tau) - \frac{e^{[\xi,-]}-id}{[\xi,-]}(\delta_{\mathfrak{g}}\xi)
\]
(see Appendix C of \cite{Dol}).
\begin{prop}
Two Maurer-Cartan elements of a complete dg Lie algebra are gauge equivalent if and only if they are homotopy equivalent.
\end{prop}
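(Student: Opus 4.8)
The plan is to assume, as the statement permits, that $\mathfrak{g}$ is complete, and to translate both notions into the single Maurer--Cartan equation in $\mathfrak{g}\hat{\otimes}\Omega_1$. Writing a degree $1$ element of $\mathfrak{g}\hat{\otimes}\Omega_1=\mathfrak{g}\{t\}\oplus\mathfrak{g}\{t\}dt$ as $\alpha(t)+\beta(t)dt$ with $\alpha(t)$ of degree $1$ and $\beta(t)$ of degree $0$ in $\mathfrak{g}$, I would use the explicit differential and bracket of the previous proposition to expand $d(\alpha+\beta dt)+\frac{1}{2}[\alpha+\beta dt,\alpha+\beta dt]=0$. Separating the coefficient of $1$ from the coefficient of $dt$ splits this into two equations: the pointwise Maurer--Cartan equation $\delta_{\mathfrak{g}}\alpha+\frac{1}{2}[\alpha,\alpha]=0$, saying that $\alpha(t)$ is a Maurer--Cartan element of $\mathfrak{g}$ for every value of $t$, and the flow equation $\frac{d\alpha}{dt}+\delta_{\mathfrak{g}}\beta+[\alpha,\beta]=0$. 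A homotopy between $x$ and $y$ is then exactly such a pair with $\alpha(0)=x$ and $\alpha(1)=y$, so the whole statement becomes a comparison between solutions of this flow and the gauge action formula recalled above.

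For the implication gauge $\Rightarrow$ homotopy, suppose $y=\exp(\xi).x$ with $\xi\in\mathfrak{g}^0$. I would set $\beta(t)=\xi$ constant and $\alpha(t)=\exp(t\xi).x$, using the gauge action formula with parameter $t\xi$. Since the gauge group acts on $MC(\mathfrak{g})$, each $\alpha(t)$ is a Maurer--Cartan element, so the first equation holds; the second is checked by differentiating the closed formula for $\exp(t\xi).x$ in $t$, where the terms $e^{t[\xi,-]}$ and $\frac{e^{t[\xi,-]}-\mathrm{id}}{[\xi,-]}$ recombine to give precisely $-\delta_{\mathfrak{g}}\xi-[\alpha(t),\xi]$. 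As $\alpha(0)=x$ and $\alpha(1)=\exp(\xi).x=y$, the element $\alpha(t)+\xi\,dt$ is a Maurer--Cartan element of $\mathfrak{g}\hat{\otimes}\Omega_1$ realizing the required homotopy.

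For the converse, starting from a homotopy $\alpha(t)+\beta(t)dt$, the flow equation $\frac{d\alpha}{dt}=-\delta_{\mathfrak{g}}\beta-[\alpha,\beta]$ should be integrated to produce a gauge transformation. The plan is to solve the time-dependent equation $\dot{g}(t)\,g(t)^{-1}=\beta(t)$ in the prounipotent group $\exp(\mathfrak{g}^0)$ with $g(0)=e$, that is, to form the time-ordered exponential of $\beta$, giving a path $g(t)=\exp(\xi(t))$. One then verifies that $t\mapsto g(t).x$ satisfies the same first-order equation with the same initial value $x$; by uniqueness of solutions this forces $g(t).x=\alpha(t)$ for all $t$, whence $y=\alpha(1)=\exp(\xi(1)).x$ exhibits $x$ and $y$ as gauge equivalent.

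The hard part will be the integration step in the complete, rather than nilpotent, setting: one must guarantee that the time-ordered exponential converges to a genuine element of $\exp(\mathfrak{g}^0)=\lim_r\exp(\mathfrak{g}^0/F_r\mathfrak{g}^0)$ and that the ODE for the gauge path has a unique solution. Here I would work stage by stage along the tower of nilpotent quotients $\mathfrak{g}/F_r\mathfrak{g}$, where the equivalence between gauge and homotopy equivalence is classical (Manetti, \cite{Man1}), and check that the canonical solutions of the flow are compatible with the projections $\mathfrak{g}/F_r\mathfrak{g}\twoheadrightarrow\mathfrak{g}/F_{r-1}\mathfrak{g}$, so that the resulting gauge parameters $\xi_r$ assemble into an element $\xi=\lim_r\xi_r$ of the completion. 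The convergence and uniqueness of these solutions in the pronilpotent setting are exactly the technical results of Appendix~C of \cite{Dol} invoked earlier.
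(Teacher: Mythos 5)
Your proposal is correct and takes essentially the same route as the paper: the same splitting of the Maurer--Cartan equation in $\mathfrak{g}\hat{\otimes}\Omega_1=\mathfrak{g}\{t\}\oplus\mathfrak{g}\{t\}dt$ into a pointwise Maurer--Cartan equation and a flow equation, and the same explicit path $\alpha(t)=\exp(t\xi).x$ with constant $\beta=\xi$ for the gauge-to-homotopy direction. For the converse, where you sketch the integration of the flow via time-ordered exponentials along the tower of nilpotent quotients, the paper simply invokes Theorem C.1 of \cite{Dol}, whose proof is precisely the argument you outline.
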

\begin{proof}
The homotopies of Maurer-Cartan elements form the set $MC(\mathfrak{g}\hat{\otimes}\Omega_1)$, that is, the set of series $f_0+f_1dt\in \mathfrak{g}\{t\}\oplus \mathfrak{g}\{t\}dt$
such that $f_0$ is of degree $1$, $f_1$ is of degree $0$ and
\[
d(f_0+f_1dt)+\frac{1}{2}[f_0+f_1dt,f_0+f_1dt]=0.
\]
According to Proposition 2.7, this gives the equation
\[
\delta_{\mathfrak{g}}f_0+\delta_{\mathfrak{g}}f_1dt+\frac{df_0}{dt}dt+\frac{1}{2}[f_0,f_0]+[f_0,f_1]dt.
\]
This is equivalent to the datum of two equations
\[
\delta_{\mathfrak{g}}f_0+\frac{1}{2}[f_0,f_0]=0
\]
and
\[
\frac{df_0}{dt}=-(\delta_{\mathfrak{g}}f_1+[f_0,f_1])=-\delta_{\mathfrak{g}}f_1+[f_1,f_0].
\]
The faces $d_0,d_1:MC(\mathfrak{g}\hat{\otimes}\Omega_1)\rightarrow MC(\mathfrak{g})$ are defined respectively by the evaluation at $t=0,dt=0$ and the evaluation at $t=1,dt=0$,
hence $d_0(f_0+f_1dt)=f_0|_{t=0}$ and $d_1(f_0+f_1dt)=f_0|_{t=1}$.
We conclude that two Maurer-Cartan elements $\tau$ and $\tau'$ are homotopic if and only if there exists two formal power series $f_0,f_1\in \mathfrak{g}\{t\}$ respectively of
degrees $1$ and $0$ such that $f_0|_{t=0}=\tau$, $f_0|_{t=1}=\tau'$ and the two equations above are satisfied.
In particular, the series $f_0$ is a solution of the differential equation
\[
\frac{df_0}{dt}=-\delta_{\mathfrak{g}}f_1++[f_1,f_0]
\]
with boundary conditions $f_0|_{t=0}=\tau$ and $f_0|_{t=1}=\tau'$. According to Theorem C.1 of \cite{Dol}, this implies that $\tau$ and $\tau'$
are connected by the action of $exp(\mathfrak{g}^0)$.

In the converse direction, suppose that $\tau$ and $\tau'$ are two Maurer-Cartan elements connected by the action of the gauge group. There exists $\xi\in \mathfrak{g}^0$
such that
\[
\tau'=e^{[\xi,-]}(\tau) - \frac{e^{[\xi,-]}-id}{[\xi,-]}(\delta_{\mathfrak{g}}\xi).
\]
We set
\[
f_0=e^{t[\xi,-]}(\tau) - \frac{e^{t[\xi,-]}-id}{[\xi,-]}(\delta_{\mathfrak{g}}\xi).
\]
This series satisfies $f_0|_{t=0}=\tau$ and $f_0|_{t=1}=\tau'$. Moreover, on the one hand,
\[
\frac{df_0}{dt} = [\xi,e^{t[\xi,-]}(\tau)]-e^{t[\xi,-]}(\delta_{\mathfrak{g}}\xi)
\]
and on the other hand
\begin{eqnarray*}
-\delta_{\mathfrak{g}}\xi+[\xi,f_0] & = & -\delta_{\mathfrak{g}}\xi + [\xi,e^{t[\xi,-]}(\tau)]-(e^{t[\xi,-]}-id)(\delta_{\mathfrak{g}}\xi) \\
 & = & [\xi,e^{t[\xi,-]}(\tau)]-e^{t[\xi,-]}(\delta_{\mathfrak{g}}\xi) \\
 & = & \frac{df_0}{dt}.
\end{eqnarray*}
Consequently, by applying Proposition C.1 of \cite{Dol}, we get a homotopy between $\tau$ and $\tau'$ defined by
\[
e^{t[\xi,-]}(\tau) - \frac{e^{t[\xi,-]}-id}{[\xi,-]}(\delta_{\mathfrak{g}}\xi) + \xi dt.
\]
\end{proof}

\section{Invariance properties}

In this section we study invariance properties of simplicial Maurer-Cartan sets. We prove Theorems 0.2 and 0.3, and conclude with an invariance result under twisting.

\subsection{Preservation of fibrations and weak equivalences}

\begin{prop}
Let $f:\mathfrak{g}\rightarrow \mathfrak{h}$ be a morphism of filtered $L_{\infty}$-algebras.

(1) If $f$ is a quasi-isomorphism inducing a quasi-isomorphism at each stage of the filtration
$F_rf:F_r\mathfrak{g}\stackrel{\sim}{\rightarrow}F_r\mathfrak{h}$, then $\hat{f}$ is a quasi-isomorphism
of complete $L_{\infty}$-algebras.

(2) If $f$ is a surjection inducing a surjection at each stage of the filtration
$F_rf:F_r\mathfrak{g}\twoheadrightarrow F_r\mathfrak{h}$, then $\hat{f}$ is a surjection
of complete $L_{\infty}$-algebras.
\end{prop}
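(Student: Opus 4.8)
The plan is to reduce each statement to a property of the filtration quotients and then pass to the limit, exploiting the fact that completion is a sequential limit $\hat{\mathfrak{g}} = \lim_r \mathfrak{g}/F_r\mathfrak{g}$. For both parts, the central tool will be the short exact sequences of chain complexes relating consecutive filtration quotients, namely
\[
0 \to F_r\mathfrak{g}/F_{r+1}\mathfrak{g} \to \mathfrak{g}/F_{r+1}\mathfrak{g} \to \mathfrak{g}/F_r\mathfrak{g} \to 0,
\]
together with the compatibility of $f$ with the filtration. I would treat the two parts separately since they use different homological machinery, but both hinge on controlling the behaviour of $\hat{f}$ layer by layer in the filtration.

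For part (1), first I would observe that the hypothesis that $F_rf$ is a quasi-isomorphism for every $r$, combined with the five lemma applied to the long exact sequence in cohomology associated to the short exact sequences above, shows by induction on $r$ that each induced map $\mathfrak{g}/F_r\mathfrak{g} \to \mathfrak{h}/F_r\mathfrak{h}$ is a quasi-isomorphism; the base case $r=1$ is the given quasi-isomorphism $f$ itself (since $F_1\mathfrak{g}=\mathfrak{g}$), and the inductive step compares the two short exact sequences via $f$ and uses that both $F_rf$ and the previous quotient map are quasi-isomorphisms. Then $\hat{f} = \lim_r (\mathfrak{g}/F_r\mathfrak{g} \to \mathfrak{h}/F_r\mathfrak{h})$ is a limit of quasi-isomorphisms in a tower. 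The remaining point is that cohomology commutes with this particular limit: because the towers $\{\mathfrak{g}/F_r\mathfrak{g}\}$ have surjective (hence degreewise surjective) transition maps, the associated $\lim^1$ terms controlling the cohomology of the limit vanish, so $H^*(\hat{\mathfrak{g}}) = \lim_r H^*(\mathfrak{g}/F_r\mathfrak{g})$ and likewise for $\mathfrak{h}$. I expect the Milnor exact sequence (the $\lim/\lim^1$ argument for the cohomology of a limit of a tower of surjections) to be the main obstacle, since one must verify that the Mittag-Leffler condition holds so that $\lim^1$ vanishes and the comparison of limits is an isomorphism.

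For part (2), the argument is more direct. Surjectivity of $\hat{f}: \hat{\mathfrak{g}} \to \hat{\mathfrak{h}}$ amounts to lifting any compatible sequence $\{\bar{y}_r\} \in \lim_r \mathfrak{h}/F_r\mathfrak{h}$ to a compatible sequence in $\lim_r \mathfrak{g}/F_r\mathfrak{g}$. I would build the lift inductively: the induced maps $\mathfrak{g}/F_r\mathfrak{g} \to \mathfrak{h}/F_r\mathfrak{h}$ are surjective (again by the five-lemma-style comparison of the short exact sequences, using that both $F_rf$ and the inductively constructed quotient maps are surjective), and at each stage one chooses a preimage of $\bar{y}_{r+1}$ that refines the already-chosen preimage of $\bar{y}_r$. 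Refinement is possible precisely because the transition map $\mathfrak{g}/F_{r+1}\mathfrak{g} \to \mathfrak{g}/F_r\mathfrak{g}$ is surjective with kernel $F_r\mathfrak{g}/F_{r+1}\mathfrak{g}$, onto which $F_rf$ surjects, so any discrepancy in the fibre can be corrected. The sequence of lifts thus assembled is coherent and maps to $\{\bar{y}_r\}$, giving the desired surjectivity. The only subtlety here is to verify that the correction terms live in the correct graded pieces and remain compatible under the transition maps, but this is a routine diagram chase rather than a genuine obstacle.
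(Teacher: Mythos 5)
Your part (2) is correct and is essentially the paper's argument with the model-category packaging removed: the paper verifies that each map $\mathfrak{g}/F_r\mathfrak{g}\rightarrow \mathfrak{h}/F_r\mathfrak{h}\times_{\mathfrak{h}/F_{r-1}\mathfrak{h}}\mathfrak{g}/F_{r-1}\mathfrak{g}$ is surjective (the Reedy fibration condition, after which it invokes the dual of Proposition 2.6 of \cite{CS1} to get surjectivity of the limit), and the element chase it performs there — lift $\overline{y}$, measure the discrepancy in $F_{r-1}\mathfrak{h}$, correct it using surjectivity of $F_{r-1}f$ — is exactly your inductive refinement step. Likewise your five-lemma reduction in part (1) is sound, though the paper gets it in one stroke from the sequences $F_r\mathfrak{g}\hookrightarrow\mathfrak{g}\rightarrow\mathfrak{g}/F_r\mathfrak{g}$ rather than by induction on consecutive quotients; note also that your inductive step needs the graded pieces $F_r\mathfrak{g}/F_{r+1}\mathfrak{g}\rightarrow F_r\mathfrak{h}/F_{r+1}\mathfrak{h}$ to be quasi-isomorphisms, which requires both $F_rf$ and $F_{r+1}f$, not ``$F_rf$ and the previous quotient map.''

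The genuine gap is in your limit step for part (1). The $\lim^1$ term controlling $H^n(\hat{\mathfrak{g}})$ in the Milnor sequence is ${\lim}^1_r H^{n-1}(\mathfrak{g}/F_r\mathfrak{g})$, i.e.\ the derived limit of the \emph{cohomology} towers, not of the chain-level towers. Degreewise surjectivity of the transition maps gives Mittag-Leffler at the chain level — which is what guarantees that the Milnor sequence exists (the limit is a homotopy limit) — but it says nothing about the towers $\{H^{n-1}(\mathfrak{g}/F_r\mathfrak{g})\}$, whose maps need not be surjective; and ${\lim}^1$ of a tower of infinite-dimensional vector spaces can be nonzero (e.g.\ the tower $t^r\mathbb{K}[t]\subset\mathbb{K}[t]$ has ${\lim}^1\cong\mathbb{K}[[t]]/\mathbb{K}[t]$). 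So the identification $H^*(\hat{\mathfrak{g}})=\lim_r H^*(\mathfrak{g}/F_r\mathfrak{g})$ that you propose to establish is simply unavailable in this generality, and the Mittag-Leffler verification you flag as ``the main obstacle'' would fail. Fortunately you do not need it: the levelwise quasi-isomorphisms give an isomorphism of towers of cohomology groups, hence isomorphisms on both $\lim$ and ${\lim}^1$, and the five lemma applied to the two Milnor sequences
\[
0\rightarrow {\lim}^1_r H^{n-1}(\mathfrak{g}/F_r\mathfrak{g})\rightarrow H^n(\hat{\mathfrak{g}})\rightarrow \lim_r H^n(\mathfrak{g}/F_r\mathfrak{g})\rightarrow 0
\]
and its analogue for $\mathfrak{h}$ yields $H^n(\hat{\mathfrak{g}})\cong H^n(\hat{\mathfrak{h}})$ without any vanishing. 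With that repair your route is a legitimate, more hands-on substitute for the paper's appeal to the dual of Proposition 2.5 of \cite{CS1} (a levelwise weak equivalence of towers of fibrations between fibrant objects induces a weak equivalence on limits).
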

\begin{proof}
(1) Let us assume that $f:\mathfrak{g}\stackrel{\sim}{\rightarrow}\mathfrak{h}$ is a quasi-isomorphism of filtered $L_{\infty}$-algebras
inducing a quasi-isomorphism at each stage of the filtration $F_rf:F_r\mathfrak{g}\stackrel{\sim}{\rightarrow}F_r\mathfrak{h}$.
Since we are working over a field, there are cofiber sequences $F_r\mathfrak{g}\hookrightarrow \mathfrak{g}\rightarrow \mathfrak{g}/F_r\mathfrak{g}$
and $F_r\mathfrak{h}\hookrightarrow \mathfrak{h}\rightarrow \mathfrak{h}/F_r\mathfrak{h}$ for every $r$. The map $f$ induces a morphism of cofiber sequences,
hence a morphism of long exact sequences of homology groups. One concludes by the Five Lemma that $f$
induces a quasi-isomorphism $\mathfrak{g}/F_r\mathfrak{g}\stackrel{\sim}{\rightarrow}\mathfrak{h}/F_r\mathfrak{h}$ for every $r$.
These quasi-isomorphisms gather into a morphism of towers of surjections
\[
\xymatrix{
\hat{\mathfrak{g}}\ar@{->>}[r]\ar[d] & ...\ar@{->>}[r] & \mathfrak{g}/F_{r+1}\mathfrak{g}\ar[d]^{\sim}\ar@{->>}[r] & \mathfrak{g}/F_r\mathfrak{g}
\ar@{->>}[r]\ar[d]^{\sim} & ...\ar@{->>}[r] & 0 \\
\hat{\mathfrak{h}}\ar@{->>}[r] & ...\ar@{->>}[r] & \mathfrak{h}/F_{r+1}\mathfrak{h}\ar@{->>}[r] & \mathfrak{h}/F_r\mathfrak{h}
\ar@{->>}[r] & ...\ar@{->>}[r] & 0
}
\]
which is a weak equivalence of towers of fibrations of fibrant objects in the projective model structure of
$Ch_{\mathbb{K}}$, thus inducing a quasi-isomorphism between the limits $\hat{f}:\hat{\mathfrak{g}}\stackrel{\sim}{\rightarrow}
\hat{\mathfrak{h}}$ by general properties of diagrams in model categories. For this we use the dual statement of Proposition 2.5 of \cite{CS1},
with fibrations of fibrant objects and limits instead of cofibrations of cofibrant objects and colimits.

(2) Let us assume that $f:\mathfrak{g}\twoheadrightarrow \mathfrak{h}$ is a surjection inducing a surjection at each stage of the filtration
$F_rf:F_r\mathfrak{g}\twoheadrightarrow F_r\mathfrak{h}$. For every $r$, the map $f$ obviously induces a surjection
$\overline{f}_r:\mathfrak{g}/F_r\mathfrak{g}\twoheadrightarrow\mathfrak{h}/F_r\mathfrak{h}$, hence a termwise fibration of towers of fibrations between fibrant objects
\[
\xymatrix{
\hat{\mathfrak{g}}\ar@{->>}[r]\ar[d] & ...\ar@{->>}[r] & \mathfrak{g}/F_{r+1}\mathfrak{g}\ar@{->>}[d]\ar@{->>}[r] & \mathfrak{g}/F_r\mathfrak{g}
\ar@{->>}[r]\ar@{->>}[d] & ...\ar@{->>}[r] & 0 \\
\hat{\mathfrak{h}}\ar@{->>}[r] & ...\ar@{->>}[r] & \mathfrak{h}/F_{r+1}\mathfrak{h}\ar@{->>}[r] & \mathfrak{h}/F_r\mathfrak{h}
\ar@{->>}[r] & ...\ar@{->>}[r] & 0.
}
\]
We would like to apply the dual statement of Proposition 2.6 of \cite{CS1}.
It is actually an explicit formulation of the fact
that the limit functor is a right Quillen functor for the injective model structure on diagrams, which coincide
with the Reedy model structure on diagrams indexed by inverse Reedy categories.
It implies that the induced morphism between the limits is a fibration, that is, the map
$\hat{f}$ is surjective.
For this, we have to prove, for every $r$, the following. Let us fix an integer $r$.
Let us denote by $p_{\mathfrak{g}}:\mathfrak{g}\twoheadrightarrow \mathfrak{g}/F_{r-1}\mathfrak{g}$ and  $p_{\mathfrak{h}}:\mathfrak{h}\twoheadrightarrow \mathfrak{h}/F_{r-1}\mathfrak{h}$ the projections.
The maps $\overline{f}_{r-1}$ and $\overline{p_{\mathfrak{h}}}:\mathfrak{h}/F_r\mathfrak{h}\twoheadrightarrow \mathfrak{h}/F_{r-1}\mathfrak{h}$ (induced by $p_{\mathfrak{h}}$)
give a pullback
\[
\xymatrix{
\mathfrak{h}/F_r\mathfrak{h}\times_{\mathfrak{h}/F_{r-1}\mathfrak{h}}\mathfrak{g}/F_{r-1}\mathfrak{g} \ar[r] \ar[d] & \mathfrak{g}/F_{r-1}\mathfrak{g} \ar@{->>}[d]^-{\overline{f}_{r-1}} \\
\mathfrak{h}/F_r\mathfrak{h} \ar@{->>}[r]_-{\overline{p_{\mathfrak{h}}}} & \mathfrak{h}/F_{r_1}\mathfrak{h} },
\]
and the maps $\overline{f}_r$ and $\overline{p_{\mathfrak{g}}}:\mathfrak{g}/F_r\mathfrak{g}\twoheadrightarrow \mathfrak{g}/F_{r-1}\mathfrak{g}$ induces a map
\[
\mathfrak{g}/F_r\mathfrak{g}\rightarrow \mathfrak{h}/F_r\mathfrak{h}\times_{\mathfrak{h}/F_{r-1}\mathfrak{h}}\mathfrak{g}/F_{r-1}\mathfrak{g}
\]
defined by
\[
\overline{t}\in \mathfrak{g}/F_r\mathfrak{g}\mapsto (\overline{f(t)},\overline{p_{\mathfrak{g}}(t)})
\]
(we use the notation $\overline{(-)}$ for the equivalences classes).
To obtain a fibration between the limits we need to check that this map is surjective, that is, this map is a fibration.
The pullback $\mathfrak{h}/F_r\mathfrak{h}\times_{\mathfrak{h}/F_{r-1}\mathfrak{h}}\mathfrak{g}/F_{r-1}\mathfrak{g}$ consists in pairs $(\overline{x},\overline{y})$
such that $\overline{x}\in \mathfrak{h}/F_r\mathfrak{h}$, $\overline{y}\in \mathfrak{g}/F_{r-1}\mathfrak{g}$ and $\overline{p_{\mathfrak{h}}(x)}=\overline{f(y)}$.
Since $\overline{p_{\mathfrak{g}}}:\mathfrak{g}/F_r\mathfrak{g}\twoheadrightarrow \mathfrak{g}/F_{r-1}\mathfrak{g}$ is surjective, there exists $\overline{y'}\in \mathfrak{g}/F_r\mathfrak{g}$
such that $\overline{y}=\overline{p_{\mathfrak{g}}(y')}$. Thus we get
\[
\overline{p_{\mathfrak{h}}(x)}=\overline{(f\circ p_{\mathfrak{g}})(y')}=\overline{(p_{\mathfrak{h}}\circ f)(y')},
\]
hence $\overline{x}=\overline{f(y')}+\overline{x'}$ for a certain $x'\in F_{r-1}\mathfrak{h}$.
Since $F_{r-1}f:F_{r-1}\mathfrak{g}\rightarrow F_{r-1}\mathfrak{h}$ by hypothesis on $f$, there exists $x''\in F_{r-1}\mathfrak{g}$ such that
$x'=f(x'')$, which implies that
\[
\overline{x}=\overline{f(y'+x'')}.
\]
Now, $\overline{p_{\mathfrak{g}}(y'+x'')}=\overline{y}+\overline{p_{\mathfrak{g}}(x'')}=\overline{y}$ since $x''$ belongs to $F_{r-1}\mathfrak{g}$.
We finally produced an inverse image of $(\overline{x},\overline{y})$ in $\mathfrak{g}/F_r\mathfrak{g}$.
\end{proof}

The following theorem is a generalization of Getzler's invariance results \cite{Get} to $L_{\infty}$ algebras which are only complete,
not nilpotent:
\begin{thm}
(1) Let $f:\mathfrak{g}\twoheadrightarrow\mathfrak{h}$ be a surjection of complete $L_{\infty}$ algebras inducing surjections at each stage of the filtration.
Suppose that for every integer $r$, the $L_{\infty}$ algebras $\mathfrak{g}/F_r\mathfrak{g}$ and $\mathfrak{h}/F_r\mathfrak{h}$ are profinite. Then $MC_{\bullet}(f)$ is a fibration of Kan complexes.

(2) Let $f:\mathfrak{g}\twoheadrightarrow\mathfrak{h}$ be a quasi-isomorphism of complete $L_{\infty}$ algebras inducing quasi-isomorphisms at each stage of the filtration.
Suppose that $\mathfrak{g}$ and $\mathfrak{h}$ are concentrated in non-negative degrees.
Then $MC_{\bullet}(f)$ is a weak equivalence of Kan complexes.
\end{thm}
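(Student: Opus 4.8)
The plan is to reduce both statements to Getzler's invariance theorems for nilpotent $L_{\infty}$ algebras \cite{Get}, applied stagewise to the nilpotent quotients $\mathfrak{g}/F_r\mathfrak{g}$ and $\mathfrak{h}/F_r\mathfrak{h}$, and then to transport the conclusions through the inverse limit. The key input is the description $MC_{\bullet}(\mathfrak{g})=\lim_r MC_{\bullet}(\mathfrak{g}/F_r\mathfrak{g})$ from Section 2.2, which exhibits $MC_{\bullet}(f)$ as the map of limits induced by the morphism of towers $\{MC_{\bullet}(\mathfrak{g}/F_r\mathfrak{g})\}_r\to\{MC_{\bullet}(\mathfrak{h}/F_r\mathfrak{h})\}_r$. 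Exactly as in the proof of Proposition 3.1, I would then invoke the fact that $\lim$ is a right Quillen functor for the injective (equivalently inverse-Reedy) model structure on towers, via the dual statement of Proposition 2.6 of \cite{CS1}, to pass the relevant property from the tower to its limit.

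For part (1), Proposition 3.1 shows that $f$ induces surjections $\mathfrak{g}/F_r\mathfrak{g}\twoheadrightarrow\mathfrak{h}/F_r\mathfrak{h}$ of nilpotent $L_{\infty}$ algebras for every $r$, so by \cite{Get} each map $MC_{\bullet}(\mathfrak{g}/F_r\mathfrak{g})\to MC_{\bullet}(\mathfrak{h}/F_r\mathfrak{h})$ is a Kan fibration; moreover each tower is a tower of Kan fibrations, as established in the proof of Proposition 2.6, hence injective-fibrant. To conclude that the limit map is a fibration, I must verify that the morphism of towers is itself an injective fibration, i.e.\ that for every $r$ the relative matching map
\[
MC_{\bullet}(\mathfrak{g}/F_r\mathfrak{g})\longrightarrow MC_{\bullet}(\mathfrak{h}/F_r\mathfrak{h})\times_{MC_{\bullet}(\mathfrak{h}/F_{r-1}\mathfrak{h})}MC_{\bullet}(\mathfrak{g}/F_{r-1}\mathfrak{g})
\]
is a Kan fibration. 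Here I use that $MC_{\bullet}$ preserves pullbacks: the Maurer-Cartan set functor does, since the differential and brackets of a pullback are computed componentwise, and this is preserved after tensoring with $\Omega_{\bullet}$; the profinite hypothesis guarantees that the quotients fall under the scope of \cite{Get} and that this identification survives for the finite limits defining the matching objects. The target is then $MC_{\bullet}$ of the $L_{\infty}$ pullback $\mathfrak{h}/F_r\mathfrak{h}\times_{\mathfrak{h}/F_{r-1}\mathfrak{h}}\mathfrak{g}/F_{r-1}\mathfrak{g}$, and the relative matching map is induced by the canonical comparison map, which was shown to be surjective in the proof of Proposition 3.1. A final application of \cite{Get} to this surjection of nilpotent $L_{\infty}$ algebras makes the relative matching map a fibration, and the dual of Proposition 2.6 of \cite{CS1} then yields that $MC_{\bullet}(f)$ is a fibration of Kan complexes.

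For part (2), Proposition 3.1 shows that $f$ induces quasi-isomorphisms $\mathfrak{g}/F_r\mathfrak{g}\stackrel{\sim}{\rightarrow}\mathfrak{h}/F_r\mathfrak{h}$ of nilpotent $L_{\infty}$ algebras for every $r$, and since $\mathfrak{g}$ and $\mathfrak{h}$ are concentrated in non-negative degrees so are all these quotients. This non-negativity is precisely what allows me to invoke Getzler's homotopy invariance theorem \cite{Get}, which makes each $MC_{\bullet}(\mathfrak{g}/F_r\mathfrak{g})\to MC_{\bullet}(\mathfrak{h}/F_r\mathfrak{h})$ a weak equivalence of Kan complexes. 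These maps assemble into a levelwise weak equivalence between two towers of Kan fibrations. Since $\lim$ is right Quillen for the injective model structure, it preserves weak equivalences between injective-fibrant objects; concretely, the inverse limit of a tower of fibrations computes its homotopy limit, and a levelwise weak equivalence of such towers induces isomorphisms on both terms of the Milnor $\lim^1$ sequence, hence a weak equivalence on limits. Passing to limits therefore shows that $MC_{\bullet}(f)$ is a weak equivalence of Kan complexes.

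The delicate point in both parts is the passage to the inverse limit, and this is where the standing hypotheses enter. Abstractly it is governed by the right Quillen property of $\lim$, but to use it one must know that the two towers are towers of fibrations of Kan complexes and that $MC_{\bullet}$ commutes with the finite limits appearing in the matching objects --- verifications that rest on \cite{Get} together with the profinite hypothesis in (1) and the non-negative degree hypothesis in (2). I expect the main obstacle to be establishing that the relative matching maps in (1) are fibrations, since this requires combining the pullback-preservation of $MC_{\bullet}$ with the surjectivity statement extracted from Proposition 3.1 before \cite{Get} can be applied.
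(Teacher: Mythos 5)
Your proposal is correct and follows the same architecture as the paper's proof: exhibit $MC_{\bullet}(f)$ as the limit of the morphism of towers $\{MC_{\bullet}(\mathfrak{g}/F_r\mathfrak{g})\}_r\to\{MC_{\bullet}(\mathfrak{h}/F_r\mathfrak{h})\}_r$, apply Getzler stagewise to the nilpotent quotients (using the surjections, respectively quasi-isomorphisms, supplied by Proposition 3.1), reduce part (1) to showing the relative matching maps are fibrations, and invoke the dual statements of Propositions 2.6 and 2.5 of \cite{CS1} to pass to the limit. The one place you genuinely diverge is the identification of the matching object: you argue directly that $MC_{\bullet}$ preserves pullbacks because brackets on an $L_{\infty}$ pullback are computed componentwise and $-\otimes\Omega_n$ is exact over a field, whereas the paper proceeds by representability --- it uses the profinite hypothesis to invoke Lemma 2.3 of \cite{Ber}, writing $MC_{\bullet}(-)\cong Mor_{CDGA_{\mathbb{K}}}(C^*(-),\Omega_{\bullet})$, and then needs the technical Lemma 3.3 (that $C^*$ sends the $L_{\infty}$ pullback to the pushout $C^*(\mathfrak{g}_1)\otimes_{C^*(\mathfrak{h})}C^*(\mathfrak{g}_2)$) to transform the pullback of Maurer--Cartan spaces. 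Your direct argument is sound for nilpotent quotients (where the Maurer--Cartan series is polynomial and no completion intervenes), and it buys something the paper's route does not: it makes the profinite hypothesis superfluous for this step, whereas the paper needs it precisely to apply \cite{Ber}. One small inaccuracy in your write-up: you say the profinite hypothesis ``guarantees that the quotients fall under the scope of \cite{Get}'' --- it does not; Getzler's theorems require nilpotence, which the quotients $\mathfrak{g}/F_r\mathfrak{g}$ have by the filtration axioms regardless of profiniteness, and in the paper profiniteness serves only the representability argument you bypassed.
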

\begin{proof}
We use the same notations as in the proof of Proposition 3.1.
Let $\mathfrak{g}$ and $\mathfrak{h}$ be two complete $L_{\infty}$-algebras.
Recall from Proposition 2.6 the towers of fibration of Kan complexes
\[
MC_{\bullet}(\mathfrak{g})\rightarrow...\twoheadrightarrow MC_{\bullet}(\mathfrak{g}/F_{r+1}\mathfrak{g})\twoheadrightarrow MC_{\bullet}(\mathfrak{g}/F_r\mathfrak{g})
\twoheadrightarrow...\twoheadrightarrow pt
\]
and
\[
MC_{\bullet}(\mathfrak{h})\rightarrow...\twoheadrightarrow MC_{\bullet}(\mathfrak{h}/F_{r+1}\mathfrak{h})\twoheadrightarrow MC_{\bullet}(\mathfrak{h}/F_r\mathfrak{h})
\twoheadrightarrow...\twoheadrightarrow pt.
\]

(1) Let $f:\mathfrak{g}\twoheadrightarrow\mathfrak{h}$ be a surjection which induces a surjection
$F_rf:F_r\mathfrak{g}\twoheadrightarrow F_r\mathfrak{h}$ at each stage of the filtration. Then it induces a surjection of nilpotent $L_{\infty}$ algebras
$\overline{f_r}:\mathfrak{g}/F_r\mathfrak{g}\twoheadrightarrow\mathfrak{h}/F_r\mathfrak{h}$ for every $r$, hence a
pointwise fibration of tower of fibrations of fibrant objects in simplicial sets
\[
\xymatrix{
... & MC_{\bullet}(\mathfrak{g}/F_{r+1}\mathfrak{g}) \ar@{->>}[r]^-{MC_{\bullet}(\overline{p_{\mathfrak{g}}})} \ar@{->>}[d]_-{MC_{\bullet}(\overline{f_{r+1}})} & MC_{\bullet}(\mathfrak{g}/F_r\mathfrak{g})\ar@{->>}[r] \ar@{->>}[d]^-{MC_{\bullet}(\overline{f_r})} & ... \\
... & MC_{\bullet}(\mathfrak{h}/F_{r+1}\mathfrak{h}) \ar@{->>}[r]_-{MC_{\bullet}(\overline{p_{\mathfrak{h}}})} & MC_{\bullet}(\mathfrak{h}/F_r\mathfrak{h}) \ar@{->>}[r] & ...
}.
\]
We would like to apply the dual version of Proposition 2.6 (3) in \cite{CS1} (that is, we consider a limit of fibrations instead of a colimit of cofibrations).
For this, we have to prove that for every integer $r$, the map
\[
(MC_{\bullet}(\overline{f_{r+1}}),MC_{\bullet}(\overline{p_{\mathfrak{g}}})):
MC_{\bullet}(\mathfrak{g}/F_{r+1}\mathfrak{g})\rightarrow MC_{\bullet}(\mathfrak{h}/F_{r+1}\mathfrak{h})\times_{MC_{\bullet}(\mathfrak{h}/F_r\mathfrak{h})}
MC_{\bullet}(\mathfrak{g}/F_r\mathfrak{g})
\]
is a fibration.
By assumption, the $L_{\infty}$ algebras $\mathfrak{g}:F_r\mathfrak{g}$ and $\mathfrak{h}/F_r\mathfrak{h}$ are profinite, so according to Lemma 2.3 of \cite{Ber} we have natural isomorphisms of simplicial sets
\[
MC_{\bullet}(\mathfrak{h}/F_{r+1}\mathfrak{h}) \cong Mor_{CDGA_{\mathbb{K}}}(C^*(\mathfrak{h}/F_{r+1}\mathfrak{h}),\Omega_{\bullet}),
\]
\[
MC_{\bullet}(\mathfrak{h}/F_r\mathfrak{h}) \cong Mor_{CDGA_{\mathbb{K}}}(C^*(\mathfrak{h}/F_r\mathfrak{h}),\Omega_{\bullet})
\]
and
\[
MC_{\bullet}(\mathfrak{g}/F_r\mathfrak{g}) \cong Mor_{CDGA_{\mathbb{K}}}(C^*(\mathfrak{g}/F_r\mathfrak{g}),\Omega_{\bullet}).
\]
Coproducts in cdgas are defined by the tensor product of chain complexes.
Given that $Mor_{CDGA_{\mathbb{K}}}(-,\Omega_{\bullet})$ transform colimits into limits, we obtain isomorphisms
\begin{eqnarray*}
 & MC_{\bullet}(\mathfrak{h}/F_{r+1}\mathfrak{h})\times_{MC_{\bullet}(\mathfrak{h}/F_r\mathfrak{h})}
 MC_{\bullet}(\mathfrak{g}/F_r\mathfrak{g}) & \\
 \cong & Mor_{CDGA_{\mathbb{K}}}(C^*(\mathfrak{h}/F_{r+1}\mathfrak{h})\otimes_{C^*(\mathfrak{h}/F_r\mathfrak{h})}
 C^*(\mathfrak{g}/F_r\mathfrak{g}),\Omega_{\bullet}) & \\
 \cong & Mor_{CDGA_{\mathbb{K}}}(C^*(\mathfrak{h}/F_{r+1}\mathfrak{h}\times_{\mathfrak{h}/F_r\mathfrak{h}}\mathfrak{g}/F_r\mathfrak{g}),
 \Omega_{\bullet}) & \\
 \cong & MC_{\bullet}(\mathfrak{h}/F_{r+1}\mathfrak{h}\times_{\mathfrak{h}/F_r\mathfrak{h}}\mathfrak{g}/F_r\mathfrak{g})
\end{eqnarray*}
where the second isomorphism holds by Lemma 3.3.
On the other hand, the map
\[
(\overline{f_{r+1}},\overline{p_{\mathfrak{g}}}):\mathfrak{g}/F_{r+1}\mathfrak{g}\twoheadrightarrow \mathfrak{h}/F_{r+1}\mathfrak{h}\times_{\mathfrak{h}/F_r\mathfrak{h}}\mathfrak{g}/F_r\mathfrak{g}
\]
induced by the pullback
\[
\xymatrix{
\mathfrak{g}/F_{r+1}\mathfrak{g}\ar[dr]\ar@/^{-1pc}/[ddr]\ar@/^{1pc}/[drr] & & \\
 & \mathfrak{h}/F_{r+1}\mathfrak{h}\times_{\mathfrak{h}/F_r\mathfrak{h}}\mathfrak{g}/F_r\mathfrak{g} \ar[r]\ar[d] & \mathfrak{g}/F_r\mathfrak{g}\ar[d] \\
 & \mathfrak{h}/F_{r+1}\mathfrak{h}\ar[r] & \mathfrak{h}/F_r\mathfrak{h}
 }
\]
is a surjection of nilpotent $L_{\infty}$ algebras, so we get a fibration of simplicial sets
\[
MC_{\bullet}((\overline{f_{r+1}},\overline{p_{\mathfrak{g}}})):MC_{\bullet}(\mathfrak{g}/F_{r+1}\mathfrak{g})\twoheadrightarrow MC_{\bullet}(\mathfrak{h}/F_{r+1}\mathfrak{h}\times_{\mathfrak{h}/F_r\mathfrak{h}}\mathfrak{g}/F_r\mathfrak{g}).
\]
In the diagram
\[
\xymatrix{
MC_{\bullet}(g/F_{r+1}g)\ar[dr]\ar@/^{-1pc}/[ddr]\ar@/^{1pc}/[drr] & & \\
 & MC_{\bullet}(\mathfrak{h}/F_{r+1}\mathfrak{h})\times_{MC_{\bullet}(\mathfrak{h}/F_r\mathfrak{h})}
 MC_{\bullet}(\mathfrak{g}/F_r\mathfrak{g}) \ar[r]\ar[d] & MC_{\bullet}(\mathfrak{g}/F_r\mathfrak{g})\ar[d] \\
 & MC_{\bullet}(\mathfrak{h}/F_{r+1}\mathfrak{h})\ar[r] & MC_{\bullet}(\mathfrak{h}/F_r\mathfrak{h}),
 }
\]
the factorization $(MC_{\bullet}(\overline{f_{r+1}}),MC_{\bullet}(\overline{p_{\mathfrak{g}}}))$ is unique up to isomorphism by universal property of the pullback,
so this map is isomorphic to the composite of the fibration $MC_{\bullet}((\overline{f_{r+1}},\overline{p_{\mathfrak{g}}}))$ with the isomorphism
$ MC_{\bullet}(\mathfrak{h}/F_{r+1}\mathfrak{h}\times_{\mathfrak{h}/F_r\mathfrak{h}}\mathfrak{g}/F_r\mathfrak{g}) \cong MC_{\bullet}(\mathfrak{h}/F_{r+1}\mathfrak{h})\times_{MC_{\bullet}(\mathfrak{h}/F_r\mathfrak{h})}
MC_{\bullet}(\mathfrak{g}/F_r\mathfrak{g})$
(this composite also makes the diagram commutative by construction).
We conclude that $(MC_{\bullet}(\overline{f_{r+1}}),MC_{\bullet}(\overline{p_{\mathfrak{g}}}))$ is a fibration.

(2) Let $f:\mathfrak{g}\stackrel{\sim}{\rightarrow}\mathfrak{h}$ be a quasi-isomorphism which induces a quasi-isomorphism
$F_rf:F_r\mathfrak{g}\stackrel{\sim}{\rightarrow}F_r\mathfrak{h}$ at each stage of the filtration. Then it induces a quasi-isomorphism
$\mathfrak{g}/F_r\mathfrak{g}\stackrel{\sim}{\rightarrow}\mathfrak{h}/F_r\mathfrak{h}$ for every $r$ (see the proof of Proposition 3.1 (1) ), hence a
weak equivalence of tower of fibrations of fibrant objects in simplicial sets
\[
\xymatrix{
... & MC_{\bullet}(\mathfrak{g}/F_{r+1}\mathfrak{g}) \ar@{->>}[r] \ar[d]_{\sim} & MC_{\bullet}(\mathfrak{g}/F_r\mathfrak{g})\ar@{->>}[r] \ar[d]_{\sim} & ... \\
... & MC_{\bullet}(\mathfrak{h}/F_{r+1}\mathfrak{h}) \ar@{->>}[r] & MC_{\bullet}(\mathfrak{h}/F_r\mathfrak{h}) \ar@{->>}[r] & ...
}.
\]
We conclude just by applying the dual statement of Proposition 2.5 (3) in \cite{CS1}.
\end{proof}
The technical lemma we use in the proof of part (1) is the following:
\begin{lem}
Let $\mathfrak{g}_1\stackrel{f_1}{\rightarrow}\mathfrak{h}\stackrel{f_2}{\leftarrow}\mathfrak{g}_2$ be a zigzag of morphisms of profinite nilpotent $L_{\infty}$ algebras, then the map
\[
C^*(\mathfrak{g}_1)\otimes_{C^*(\mathfrak{h})}C^*(\mathfrak{g}_2)\rightarrow C^*(\mathfrak{g}_1\times_{\mathfrak{h}}\mathfrak{g}_2)
\]
induced by universal property of the pushout
\[
\xymatrix{
C^*(\mathfrak{h})\ar[r]\ar[d] & C^*(\mathfrak{g}_2)\ar[d]\ar@/^{1pc}/[rdd] & \\
C^*(\mathfrak{g}_1)\ar[r]\ar@/^{-1pc}/[drr] & C^*(\mathfrak{g}_1)\otimes_{C^*(\mathfrak{h})}C^*(\mathfrak{g}_2) \ar[dr] & \\
 & & C^*(\mathfrak{g}_1\times_{\mathfrak{h}}\mathfrak{g}_2)
}
\]
is an isomorphism of cdgas.
\end{lem}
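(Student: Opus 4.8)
The plan is to exploit that the map in the statement is, by the very universal property of the pushout used to construct it, automatically a morphism of cdgas compatible with the Chevalley--Eilenberg differentials; hence the entire content of the lemma is that it is a bijection of underlying graded vector spaces, after which the differentials match for free. I would therefore first forget the differentials and compare the three underlying graded commutative algebras, reinstating the differential only at the very end.

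First I would recall that $C^*(\mathfrak{g})$ is the graded dual of the quasi-cofree coalgebra $(\Lambda^{\bullet\geq1}s\mathfrak{g},Q)$. Forgetting $Q$, the coalgebra $\Lambda^{\bullet\geq1}s\mathfrak{g}$ is the cofree conilpotent cocommutative coalgebra on $s\mathfrak{g}$, whose graded dual is the completed free graded-commutative algebra on $(s\mathfrak{g})^{*}$; so as a graded algebra $C^*(\mathfrak{g})$ is free on the generating space $(s\mathfrak{g})^{*}$. The free graded-commutative algebra functor is a left adjoint, hence carries a pushout of generating spaces over a common subspace to the corresponding relative tensor product of algebras (applied degreewise to the finite-dimensional quotients and then passed to the limit, to accommodate the completion). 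Thus it suffices to identify the generators and to show that the generating space of $C^*(\mathfrak{g}_1\times_{\mathfrak{h}}\mathfrak{g}_2)$ is the pushout over $(s\mathfrak{h})^{*}$ of the generating spaces of $C^*(\mathfrak{g}_1)$ and $C^*(\mathfrak{g}_2)$.

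This reduces the lemma to the purely linear assertion
\[
(s(\mathfrak{g}_1\times_{\mathfrak{h}}\mathfrak{g}_2))^{*}\cong (s\mathfrak{g}_1)^{*}\oplus_{(s\mathfrak{h})^{*}}(s\mathfrak{g}_2)^{*}.
\]
Since suspension commutes with the pullback and the underlying graded vector space of $\mathfrak{g}_1\times_{\mathfrak{h}}\mathfrak{g}_2$ is the pullback taken in graded vector spaces, this is exactly the statement that graded linear duality sends the pullback to the pushout of the duals. Over a field duality is exact, so I would dualize the left exact sequence $0\to \mathfrak{g}_1\times_{\mathfrak{h}}\mathfrak{g}_2\to \mathfrak{g}_1\oplus\mathfrak{g}_2\to \mathfrak{h}$ induced by $f_1-f_2$ (replacing $\mathfrak{h}$ by the image of $f_1-f_2$, over which every functional extends), obtaining precisely the presentation of the pushout of duals and identifying it with $(\mathfrak{g}_1\times_{\mathfrak{h}}\mathfrak{g}_2)^{*}$.

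The step in which the profiniteness hypothesis is indispensable, and which I expect to be the main obstacle, is exactly this duality step: for infinite-dimensional graded spaces linear duality neither commutes with the relevant (co)limits nor is involutive, and the identification can genuinely fail. To handle it I would use that a profinite $L_\infty$ algebra is degreewise an inverse limit of finite-dimensional pieces, so that its graded dual is the corresponding filtered colimit of finite-dimensional duals, on which duality is an exact anti-equivalence exchanging finite limits and finite colimits. Because a pullback of profinite objects is again profinite and filtered colimits are exact, I would run the finite-dimensional computation at each level and pass to the colimit. Reinstating the free-algebra functor and the Chevalley--Eilenberg differentials, which are dual to the componentwise $L_\infty$ brackets carried by the pullback, then upgrades the bijection of generators to the claimed isomorphism of cdgas.
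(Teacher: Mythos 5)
Your proposal is correct, but it runs along a different axis than the paper's own proof. The paper never descends to generators: it uses that limits of $L_{\infty}$ algebras are created in chain complexes to present $\mathfrak{g}_1\times_{\mathfrak{h}}\mathfrak{g}_2$ as a coreflexive equalizer $\mathfrak{g}_1\times_{\mathfrak{h}}\mathfrak{g}_2\rightarrow\mathfrak{g}_1\oplus\mathfrak{g}_2\rightrightarrows\mathfrak{g}_1\oplus\mathfrak{h}\oplus\mathfrak{g}_2$, invokes the K\"unneth-type isomorphism $C^*(\mathfrak{g}_1)\otimes C^*(\mathfrak{g}_2)\cong C^*(\mathfrak{g}_1\oplus\mathfrak{g}_2)$ coming from the componentwise structure on direct sums, and then argues that $C^*$ carries this equalizer to the reflexive coequalizer presenting $C^*(\mathfrak{g}_1)\otimes_{C^*(\mathfrak{h})}C^*(\mathfrak{g}_2)$, checking that the resulting isomorphism is the universal-property map. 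You instead exploit that the morphisms are strict, so each $C^*(f_i)$ is induced by the dual linear map on the generating spaces $(s\mathfrak{g}_i)^{*}$; freeness of $C^*$ as a graded algebra plus the left-adjointness of the free functor reduces everything to the linear statement that continuous duality sends the kernel presentation of the pullback to the cokernel presentation of the pushout of duals, which you verify at finite-dimensional stages and pass to the filtered colimit. The two arguments rest on the same principle (profinite duality is exact and exchanges the limit presentation with a colimit presentation), but your decomposition buys a sharper isolation of exactly where profiniteness is indispensable --- a point the paper leaves implicit in its equalizer-to-coequalizer step --- at the modest cost of having to be careful that $C^*(\mathfrak{g})$ for profinite $\mathfrak{g}$ is the \emph{uncompleted} free algebra on the continuous dual (the colimit of duals of the finite quotients), not the full graded dual of the cofree coalgebra; your text wavers between ``completed free'' and ``free,'' and only the continuous-dual reading makes the uncompleted tensor product in the statement the correct pushout. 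With that convention fixed, your final step (a cdga map which is bijective, being induced by an isomorphism on generators of free graded algebras, is a cdga isomorphism) closes the argument soundly.
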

\begin{proof}
Recall that limits of $L_{\infty}$ algebras are determined in the underlying category of chain complexes, so the pullback $\mathfrak{g}_1\times_{\mathfrak{h}}\mathfrak{g}_2$ is defined
by a coreflexive equalizer
\[
\xymatrix{
\mathfrak{g}_1\times_{\mathfrak{h}}\mathfrak{g}_2\ar[r] & \mathfrak{g}_1\oplus\mathfrak{g}_2\ar@<1ex>[r]\ar@<-1ex>[r] & \mathfrak{g}_1\oplus\mathfrak{h}\oplus\mathfrak{g}_2 \ar@/^{-1pc}/[l]
}
\]
induced by $f_1$ and $f_2$.
The $L_{\infty}$ algebra structure on the direct sum $\mathfrak{g}_1\oplus\mathfrak{g}_2$ is defined componentwise, thus, by construction of the Chevalley-Eilenberg algebra,
the isomorphism of exterior algebras
\[
\Lambda(\mathfrak{g}_1^*)\otimes\Lambda(\mathfrak{g}_2^*)\cong\Lambda((\mathfrak{g}_1\oplus\mathfrak{g}_2)^*)
\]
induces an isomorphism of Chevalley-Eilenberg algebras
\[
C^*(\mathfrak{g}_1)\otimes C^*(\mathfrak{g}_2)\cong C^*(\mathfrak{g}_1\oplus\mathfrak{g}_2).
\]
The cdga $C^*(\mathfrak{g}_1\times_{\mathfrak{h}}\mathfrak{g}_2)$ is obtained by the reflexive coequalizer
\[
\xymatrix{
C^*(\mathfrak{g}_1\oplus\mathfrak{g}_2)\ar@<1ex>[r]\ar@<-1ex>[r] & C^*(\mathfrak{g}_1\oplus\mathfrak{h}\oplus\mathfrak{g}_2) \ar@/^{-1pc}/[l] \ar[r] & C^*(\mathfrak{g}_1\times_{\mathfrak{h}}\mathfrak{g}_2)
}
\]
induced by the coreflexive equalizer defining $\mathfrak{g}_1\times_{\mathfrak{h}}\mathfrak{g}_2$.
This coequalizer is naturally isomorphic to the coequalizer
\[
coeq(C^*(\mathfrak{g}_1)\otimes C^*(\mathfrak{g}_2)\rightrightarrows C^*(\mathfrak{g}_1)\otimes C^*(\mathfrak{h})\otimes C^*(\mathfrak{g}_2))
\]
which is exactly $C^*(\mathfrak{g}_1)\otimes_{C^*(\mathfrak{h})}C^*(\mathfrak{g}_2)$.
Moreover, this isomorphism is the map induced by universal property of the pushout $C^*(\mathfrak{g}_1)\otimes_{C^*(\mathfrak{h})}C^*(\mathfrak{g}_2)$, which concludes the proof.
\end{proof}

\begin{cor}
The simplicial Maurer-Cartan functor, precomposed with the completion functor, sends filtration-preserving surjections of filtered $L_{\infty}$-algebras into fibrations
of Kan complexes, and filtration-preserving quasi-isomorphisms of filtered $L_{\infty}$-algebras into weak equivalences of Kan complexes.
\end{cor}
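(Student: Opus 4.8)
The plan is to read the statement off as a direct packaging of Proposition 3.1 together with Theorem 3.2, exploiting the fact that by definition the simplicial Maurer-Cartan functor precomposed with completion sends a filtered $L_{\infty}$-algebra $\mathfrak{g}$ to $MC_{\bullet}(\hat{\mathfrak{g}})$, and that each such $MC_{\bullet}(\hat{\mathfrak{g}})$ is already a Kan complex by Proposition 2.6. So the targets and sources are automatically Kan, and the only content is the behaviour of the induced maps.

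First I would treat the surjection case. Let $f:\mathfrak{g}\twoheadrightarrow\mathfrak{h}$ be a filtration-preserving surjection, i.e.\ one with $F_rf:F_r\mathfrak{g}\twoheadrightarrow F_r\mathfrak{h}$ surjective for every $r$. By Proposition 3.1 (2), the completion $\hat{f}:\hat{\mathfrak{g}}\twoheadrightarrow\hat{\mathfrak{h}}$ is a surjection of complete $L_{\infty}$-algebras that is again surjective at each stage of the filtration. Under the profiniteness hypothesis of Theorem 3.2 (1) on the quotients $\mathfrak{g}/F_r\mathfrak{g}$ and $\mathfrak{h}/F_r\mathfrak{h}$, that theorem then applies verbatim to $\hat{f}$ and yields that $MC_{\bullet}(\hat{f})$ is a fibration of Kan complexes.

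Next I would treat the quasi-isomorphism case in exactly the same spirit. Let $f:\mathfrak{g}\stackrel{\sim}{\rightarrow}\mathfrak{h}$ be a filtration-preserving quasi-isomorphism, so that $F_rf:F_r\mathfrak{g}\stackrel{\sim}{\rightarrow}F_r\mathfrak{h}$ for every $r$. By Proposition 3.1 (1) the completion $\hat{f}:\hat{\mathfrak{g}}\stackrel{\sim}{\rightarrow}\hat{\mathfrak{h}}$ is a quasi-isomorphism of complete $L_{\infty}$-algebras, and the proof of that proposition produces, via the Five Lemma applied to the cofiber sequences $F_r\mathfrak{g}\hookrightarrow\mathfrak{g}\rightarrow\mathfrak{g}/F_r\mathfrak{g}$, the quasi-isomorphisms $\mathfrak{g}/F_r\mathfrak{g}\stackrel{\sim}{\rightarrow}\mathfrak{h}/F_r\mathfrak{h}$ at each stage. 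With $\mathfrak{g}$ and $\mathfrak{h}$ concentrated in non-negative degrees, so that the hypothesis of Theorem 3.2 (2) holds for $\hat{\mathfrak{g}}$ and $\hat{\mathfrak{h}}$ as well, Theorem 3.2 (2) applies to $\hat{f}$ and gives that $MC_{\bullet}(\hat{f})$ is a weak equivalence of Kan complexes.

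There is essentially no genuine obstacle here, since the corollary merely repackages the previous two results in functorial language. The only point deserving a moment of care is verifying that the hypotheses of Theorem 3.2—profiniteness of the filtration quotients in part (1) and non-negativity of the grading in part (2)—survive completion; this is immediate, because completion leaves the quotients $\mathfrak{g}/F_r\mathfrak{g}$ unchanged and does not alter the underlying graded vector space in any fixed degree. Hence the composite functor $MC_{\bullet}\circ\widehat{(-)}$ inherits both preservation properties directly.
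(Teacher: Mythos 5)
Your proposal is correct and is essentially the paper's own (implicit) argument: the corollary is stated without proof precisely because it is the immediate combination of Proposition 3.1 (completion preserves stagewise surjections and stagewise quasi-isomorphisms) with Theorem 3.2, exactly as you lay out. One small imprecision worth fixing: completion does alter the graded pieces (each $\hat{\mathfrak{g}}^n$ is a limit, generally different from $\mathfrak{g}^n$), but what actually matters — and is true — is that it preserves vanishing in negative degrees and leaves the quotients $\hat{\mathfrak{g}}/F_r\hat{\mathfrak{g}}\cong\mathfrak{g}/F_r\mathfrak{g}$ unchanged, so the hypotheses of Theorem 3.2 indeed carry over.
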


Moreover, the simplicial Maurer-Cartan functor has the following homotopical properties:
\begin{cor}
Let $\mathfrak{g}$ be a filtered $L_{\infty}$ algebra.
The functor
\[
MC_{\bullet}(\mathfrak{g}\hat{\otimes}-):CDGA_{\mathbb{K}}\rightarrow sSet
\]
preserves fibrations and weak equivalences of cdgas.
\end{cor}
\begin{proof}
By definition of the model category structure on $CDGA_{\mathbb{K}}$ (see \cite{Hin}), the fibrations and
weak equivalences are determined by the forgetful functor $CDGA_{\mathbb{K}}\rightarrow Ch_{\mathbb{K}}$.
This implies that fibrations and weak equivalences of $CDGA_{\mathbb{K}}$ are respectively surjections and
quasi-isomorphisms.
Since the tensor product of $Ch_{\mathbb{K}}$ preserves quasi-isomorphisms, any weak equivalence of cdgas
$A\stackrel{\sim}{\rightarrow}B$ induces a quasi-isomorphism of $L_{\infty}$ algebras
$\mathfrak{g}\otimes A\stackrel{\sim}{\rightarrow}\mathfrak{g}\otimes B$. Moreover, by definition of the induced filtrations on
$\mathfrak{g}\otimes A$ and $\mathfrak{g}\otimes B$, this quasi-isomorphism restricts to a quasi-isomorphism
$F_r(\mathfrak{g}\otimes A)\stackrel{\sim}{\rightarrow}F_r(\mathfrak{g}\otimes B)$ for every integer $r$, hence a quasi-isomorphism
at the level of completions $\mathfrak{g}\hat{\otimes}A\stackrel{\sim}{\rightarrow}\mathfrak{g}\hat{\otimes}B$ by Proposition 3.1.

The tensor product of $Ch_{\mathbb{K}}$ also preserves surjections, so any surjection of cdgas
$A\twoheadrightarrow B$ induces a surjection of $L_{\infty}$ algebras
$\mathfrak{g}\otimes A\twoheadrightarrow \mathfrak{g}\otimes B$. Moreover, by definition of the induced filtrations on
$\mathfrak{g}\otimes A$ and $\mathfrak{g}\otimes B$, this surjection restricts to a surjection
$F_r(\mathfrak{g}\otimes A)\twoheadrightarrow F_r(\mathfrak{g}\otimes B)$ for every integer $r$, hence a surjection
at the level of completions $\mathfrak{g}\hat{\otimes}A\twoheadrightarrow \mathfrak{g}\hat{\otimes}B$ by Proposition 3.1.

The result follows by Theorem 3.2.
\end{proof}

\subsection{A generalization of Goldman-Millson Theorem}

We also deduce as a corollary a generalization of Goldman-Millson invariance Theorem \cite{GM}:
\begin{thm}
A quasi-isomorphism of filtered $L_{\infty}$-algebras $f:\mathfrak{g}\stackrel{\sim}{\rightarrow}\mathfrak{h}$ restricting to a quasi-isomorphism
at each stage of the filtration induces a bijection of Maurer-Cartan moduli sets
\[
\mathcal{MC}(\hat{\mathfrak{g}})\cong \mathcal{MC}(\hat{\mathfrak{h}}).
\]
\end{thm}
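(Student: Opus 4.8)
The plan is to deduce this statement directly from the simplicial invariance result of Theorem 3.2 by passing to connected components. Recall that by definition $\mathcal{MC}(\hat{\mathfrak{g}})=\pi_0 MC_{\bullet}(\hat{\mathfrak{g}})$ and $\mathcal{MC}(\hat{\mathfrak{h}})=\pi_0 MC_{\bullet}(\hat{\mathfrak{h}})$, so it suffices to produce a natural bijection between the sets of connected components of the two simplicial Maurer-Cartan sets.

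First I would reduce to the complete setting. Applying Proposition 3.1(1) to the filtered quasi-isomorphism $f$ yields a quasi-isomorphism $\hat{f}:\hat{\mathfrak{g}}\stackrel{\sim}{\rightarrow}\hat{\mathfrak{h}}$ of complete $L_{\infty}$-algebras; moreover the proof of that proposition shows that $f$ induces quasi-isomorphisms $\mathfrak{g}/F_r\mathfrak{g}\stackrel{\sim}{\rightarrow}\mathfrak{h}/F_r\mathfrak{h}$ at every stage, and since $\hat{\mathfrak{g}}/F_r\hat{\mathfrak{g}}=\mathfrak{g}/F_r\mathfrak{g}$, the completion $\hat{f}$ inherits quasi-isomorphisms at every stage of its filtration. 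Together with the hypothesis that $\mathfrak{g}$ and $\mathfrak{h}$ are concentrated in non-negative degrees, this is exactly the input required by Theorem 3.2(2) (equivalently Corollary 3.4), which gives that $MC_{\bullet}(\hat{f}):MC_{\bullet}(\hat{\mathfrak{g}})\rightarrow MC_{\bullet}(\hat{\mathfrak{h}})$ is a weak equivalence of Kan complexes.

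It then remains to observe that $\pi_0$ sends weak equivalences between Kan complexes to bijections: on a Kan complex $\pi_0$ computes genuine homotopy classes of vertices, so a weak equivalence restricts in particular to a bijection on $\pi_0$. Applying this to $MC_{\bullet}(\hat{f})$ produces the desired bijection $\mathcal{MC}(\hat{\mathfrak{g}})\cong\mathcal{MC}(\hat{\mathfrak{h}})$, which by construction is induced by $\hat{f}$ and hence natural in $f$.

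The genuinely delicate point is not the final $\pi_0$ step, which is formal, but ensuring that the hypotheses of Theorem 3.2(2) are in force, and in particular the non-negative degree assumption, which is what upgrades the stagewise equivalences into a full weak equivalence rather than merely a bijection on components at each finite stage. If one wished to dispense with this hypothesis and argue directly at the level of moduli sets, the main obstacle would be that $\pi_0$ does not commute with the inverse limit $\lim_r MC_{\bullet}(\mathfrak{g}/F_r\mathfrak{g})$: one would then be forced to control the associated $\lim^1$ term of the tower of fundamental groups $\pi_1 MC_{\bullet}(\mathfrak{g}/F_r\mathfrak{g})$ and compare it with the corresponding tower for $\mathfrak{h}$, which is precisely where the degree restriction re-enters.
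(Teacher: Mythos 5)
Your proposal is correct and follows exactly the paper's route: the paper presents this theorem as a direct corollary of Theorem 3.2(2), obtained---just as you do---by completing $f$ via Proposition 3.1 and then applying $\pi_0$, where a weak equivalence of Kan complexes induces a bijection on connected components. Your flagging of the non-negative degree hypothesis is also consistent with the paper, whose introductory statement (Corollary 0.2) carries that assumption even though the Section 3.2 statement omits it.
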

The equivalence between the quotients by gauge groups actions is recovered in the Lie case:
\begin{cor}
A quasi-isomorphism of filtered dg Lie algebras $f:\mathfrak{g}\stackrel{\sim}{\rightarrow}\mathfrak{h}$ restricting to a quasi-isomorphism
at each stage of the filtration induces a bijection of Maurer-Cartan moduli sets
\[
MC(\hat{\mathfrak{g}})/exp(\hat{\mathfrak{g}}^0)\cong MC(\hat{\mathfrak{h}})/exp(\hat{\mathfrak{h}}^0).
\]
\end{cor}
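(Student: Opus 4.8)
The plan is to deduce this from the preceding theorem together with the identification of homotopy equivalence and gauge equivalence for complete dg Lie algebras. A dg Lie algebra is in particular an $L_{\infty}$-algebra, so the hypotheses of Theorem 3.6 are met and $f$ induces a bijection of moduli sets $\mathcal{MC}(\hat{\mathfrak{g}}) \cong \mathcal{MC}(\hat{\mathfrak{h}})$. It then remains only to rewrite each moduli set as a quotient of Maurer-Cartan elements by the gauge action.

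First I would unwind the definition $\mathcal{MC}(\hat{\mathfrak{g}}) = \pi_0 MC_{\bullet}(\hat{\mathfrak{g}})$. By Proposition 2.6 the simplicial set $MC_{\bullet}(\hat{\mathfrak{g}})$ is a Kan complex, so its set of connected components is the quotient of its set of $0$-simplices, namely $MC(\hat{\mathfrak{g}})$, by the relation ``being joined by a $1$-simplex''; the Kan condition guarantees that this relation is already reflexive, symmetric and transitive, hence an equivalence relation. By Proposition 2.8, two Maurer-Cartan elements are joined by such a $1$-simplex, that is, are homotopy equivalent, exactly when they are gauge equivalent under the action of $exp(\hat{\mathfrak{g}}^0)$. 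Therefore $\pi_0 MC_{\bullet}(\hat{\mathfrak{g}}) = MC(\hat{\mathfrak{g}})/exp(\hat{\mathfrak{g}}^0)$, and likewise for $\mathfrak{h}$.

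To finish I would check that the bijection of Theorem 3.6, which is nothing but the map $\pi_0 MC_{\bullet}(\hat{f})$, is compatible with these two identifications. Since $f$ is a morphism of dg Lie algebras it intertwines the gauge actions, $\hat{f}(exp(\xi).\tau) = exp(\hat{f}(\xi)).\hat{f}(\tau)$, so the map $MC(\hat{f})$ on Maurer-Cartan elements descends to the gauge quotients and the resulting square commutes. Composing then yields the announced bijection
\[
MC(\hat{\mathfrak{g}})/exp(\hat{\mathfrak{g}}^0) \cong MC(\hat{\mathfrak{h}})/exp(\hat{\mathfrak{h}}^0).
\]
I expect the only point requiring care to be the reduction of $\pi_0$ of the Kan complex to the naive quotient of vertices by edge-homotopy, which rests on the Kan property established in Proposition 2.6; the compatibility of $f$ with the gauge action, and hence of the two descriptions of the moduli sets, is then a formal verification.
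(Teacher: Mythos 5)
Your proposal is correct and follows essentially the same route as the paper, which deduces this corollary from the generalized Goldman--Millson theorem (Theorem 3.6) combined with the identification of homotopy equivalence and gauge equivalence for complete dg Lie algebras (Proposition 2.8), the Kan property of Proposition 2.6 ensuring that $\pi_0$ is the naive quotient of $MC(\hat{\mathfrak{g}})$ by the edge relation. Your added verification that $\hat{f}$ intertwines the gauge actions, so the bijection descends compatibly, is a formal point the paper leaves implicit.
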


\subsection{Invariance under twisting}

To conclude, we show how this invariance result behaves with respect to twisting by a Maurer-Cartan
element:
\begin{prop}
Let $f:\mathfrak{g}\stackrel{\sim}{\rightarrow}\mathfrak{h}$ be a quasi-isomorphism of complete $L_{\infty}$ algebras
inducing a quasi-isomorphism at each stage of the filtrations.
Let $\varphi$ be a Maurer-Cartan element in $\mathfrak{g}$.
Then $f$ gives a quasi-isomorphism of twisted $L_{\infty}$-algebras $\mathfrak{g}^{\varphi}\stackrel{\sim}{\rightarrow}\mathfrak{h}^{f(\varphi)}$
(see Definition 2.1) inducing a quasi-isomorphism at each stage of the filtrations.
\end{prop}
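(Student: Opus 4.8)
The plan is to reduce the statement to the untwisted situation by passing to the associated graded of the filtration, exploiting that the twisting terms strictly raise the filtration degree and are therefore invisible on the associated graded. Since the morphisms of $dgL_{\infty}^{filt}$ are strict filtration-preserving morphisms, I would first record the elementary facts. Because $f$ commutes with all the brackets $l_k$ and is continuous for the filtrations, applying it to the Maurer--Cartan series of $\varphi$ gives $\mathcal{F}_{\mathfrak{h}}(f(\varphi))=f(\mathcal{F}_{\mathfrak{g}}(\varphi))=0$, so $f(\varphi)$ is a Maurer--Cartan element of $\mathfrak{h}$; and applying $f$ termwise to $l_k^{\varphi}$ and interchanging it with the convergent sum yields $f(l_k^{\varphi}(x_1,\dots,x_k))=l_k^{f(\varphi)}(f(x_1),\dots,f(x_k))$, so that $f$ is a strict morphism of the twisted algebras $\mathfrak{g}^{\varphi}\to\mathfrak{h}^{f(\varphi)}$. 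By Lemma 2.2 the underlying filtered chain complexes are unchanged, so $f$ still induces maps $F_rf$ at each stage.

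The key observation is that the twisted differential $d_{\varphi}=d_{\mathfrak{g}}+\sum_{k\ge 1}\frac{1}{k!}[\varphi^{\wedge k},-]$ differs from $d_{\mathfrak{g}}$ by operators sending $F_s\mathfrak{g}$ into $F_{s+1}\mathfrak{g}$, by compatibility of the filtration with the brackets together with completeness (exactly as in the proof of Lemma 2.2). Hence these correction terms vanish on the associated graded, so $\mathrm{gr}(\mathfrak{g}^{\varphi})=\mathrm{gr}(\mathfrak{g})$ as complexes, and likewise for $\mathfrak{h}$. Since $f$ is a quasi-isomorphism at each stage $F_sf\colon F_s\mathfrak{g}\to F_s\mathfrak{h}$ (untwisted), the five lemma applied to the short exact sequences $0\to F_{s+1}\to F_s\to \mathrm{gr}_s\to 0$ shows that $\mathrm{gr}_s(f)$ is a quasi-isomorphism for every $s$; because the twisted and untwisted associated gradeds coincide, $\mathrm{gr}(f)$ is also a quasi-isomorphism of the twisted complexes.

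It remains to upgrade this to the completed twisted complexes and to each filtration stage, and here the main obstacle is convergence: arguing directly on the complete algebra through its filtration spectral sequence risks $\lim^{1}$ obstructions since the complexes are unbounded. I would therefore descend to the nilpotent quotients, where the induced filtration is finite. For each $r$, twisting commutes with the projection $\mathfrak{g}\to\mathfrak{g}/F_r\mathfrak{g}$, giving $\mathfrak{g}^{\varphi}/F_r\mathfrak{g}^{\varphi}=(\mathfrak{g}/F_r\mathfrak{g})^{\bar{\varphi}}$ whose filtration $\{F_s/F_r\}_{s\le r}$ is finite, so a finite induction with the five lemma (equivalently, the comparison theorem for a bounded filtration) upgrades the quasi-isomorphism on $\mathrm{gr}$ to a quasi-isomorphism $f\colon(\mathfrak{g}/F_r\mathfrak{g})^{\bar{\varphi}}\to(\mathfrak{h}/F_r\mathfrak{h})^{\overline{f(\varphi)}}$. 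These assemble into a level-wise quasi-isomorphism of towers of fibrations of fibrant objects in $Ch_{\mathbb{K}}$; passing to the limit, which recovers $\mathfrak{g}^{\varphi}$ and $\mathfrak{h}^{f(\varphi)}$ by Lemma 2.2, and invoking the limit-comparison argument already used in the proof of Proposition 3.1, yields $\mathfrak{g}^{\varphi}\stackrel{\sim}{\rightarrow}\mathfrak{h}^{f(\varphi)}$. Running the same finite-filtration argument on the quotients $F_r\mathfrak{g}/F_s\mathfrak{g}$ for $s\ge r$ and taking the limit over $s$ gives the quasi-isomorphism at each stage $F_r\mathfrak{g}^{\varphi}\stackrel{\sim}{\rightarrow}F_r\mathfrak{h}^{f(\varphi)}$, completing the proof.
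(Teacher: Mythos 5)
Your proof is correct, but it takes a genuinely different route from the paper at the crucial convergence step. Both arguments share the same skeleton: $f(\varphi)$ is Maurer--Cartan and $f$ is a strict morphism of twisted algebras since it commutes with brackets and is filtration-continuous; and the key observation that the twisting corrections raise filtration degree, so $\mathrm{gr}(\mathfrak{g}^{\varphi})=\mathrm{gr}(\mathfrak{g})$ and the five lemma applied to $0\to F_{s+1}\to F_s\to \mathrm{gr}_s\to 0$ gives that $\mathrm{gr}(f)$ is a quasi-isomorphism. Where you diverge is in upgrading this to the complete algebras: the paper stays with the filtration spectral sequences of $\mathfrak{g}^{\varphi}$ and $\mathfrak{h}^{f(\varphi)}$, notes exactly the convergence problem you worry about (the filtration is not bounded below and the spectral sequence is not regular, so neither the classical nor the complete convergence theorem applies), and resolves it by invoking the Eilenberg--Moore comparison theorem (Theorem 5.5.11 in \cite{Wei}), which for complete exhaustive filtrations concludes quasi-isomorphism from the $E_1$-isomorphism. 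You instead bypass spectral sequences entirely: you descend to the nilpotent quotients $(\mathfrak{g}/F_r\mathfrak{g})^{\bar{\varphi}}$, where the induced filtration is finite and a finite five-lemma induction suffices, and then pass to the limit of the towers of surjections via the dual of Proposition 2.5 of \cite{CS1}, i.e.\ the same machinery already deployed in Proposition 3.1 and Theorem 3.2. Your approach buys two things: it is more elementary and self-contained relative to Section 3, and it actually proves the stage-wise claim $F_rf:F_r\mathfrak{g}^{\varphi}\stackrel{\sim}{\rightarrow}F_r\mathfrak{h}^{f(\varphi)}$ explicitly (via the quotients $F_r\mathfrak{g}/F_s\mathfrak{g}$), which the paper's proof leaves implicit; the paper's approach is shorter once the comparison theorem is accepted. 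One small point you should make explicit: your final limit step uses $F_r\mathfrak{g}=\lim_s F_r\mathfrak{g}/F_s\mathfrak{g}$, which does hold here---apply $\lim_s$ to the exact sequences $0\to F_r\mathfrak{g}/F_s\mathfrak{g}\to\mathfrak{g}/F_s\mathfrak{g}\to\mathfrak{g}/F_r\mathfrak{g}\to 0$ and use that $\lim^1$ vanishes for the tower $\{F_r\mathfrak{g}/F_s\mathfrak{g}\}_s$ with surjective transition maps---but it is not automatic from the bare statement that $\mathfrak{g}$ is complete, so it deserves a line.
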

\begin{proof}
The map $f$ sends any Maurer-Cartan element $\varphi$ of $\mathfrak{g}$ to a Maurer-Cartan element of $\mathfrak{h}$.
Indeed, for every integer  $n$ we have
\[
\sum_{k=1}^n \frac{1}{n!}[f(\varphi)^{\wedge k}] = f(\sum_{k=1}^n \frac{1}{n!}[\varphi^{\wedge k}])
\]
because $f$ is linear and commutes with all the brackets.
Since both terms are converging series (by completeness of $\mathfrak{h}$) we get at the limit
\[
\sum_{k\geq 1} \frac{1}{n!}[f(\varphi)^{\wedge k}] = f(\sum_{k\geq 1} \frac{1}{n!}[\varphi^{\wedge k}])=0.
\]
Moreover, $f$ defines a morphism of $L_{\infty}$ algebras $f:\mathfrak{g}^{\varphi}\rightarrow\mathfrak{h}^{f(\varphi)}$
by the same arguments.
Now, we use a standard spectral sequence argument for filtered complexes. The filtration of $\mathfrak{g}^{\varphi}$, respectively $\mathfrak{h}^{f(\varphi)}$, is exhaustive and complete.
We consider the associated spectral sequences $\{E^r_{p,q}(\mathfrak{g}^{\varphi})\}$ and  $\{E^r_{p,q}(\mathfrak{h}^{\varphi})\}$.
We have a priori no guaranty that such spectral sequences converge: since these filtrations are not bounded
below and the corresponding spectral sequences are not regular, we cannot appply neither the classical convergence
theorem nor the complete convergence theorem. However, with exhaustive and complete filtrations we can use the Eilenberg-Moore comparison theorem (Theorem 5.5.11 in \cite{Wei}). We just have to check that there exists
an integer $r$ such that for every integers $p$ and $q$ the map $f$ induces an isomorphism
\[
f^r:E^r_{p,q}(\mathfrak{g}^{\varphi})\stackrel{\cong}{\rightarrow} E^r_{p,q}(\mathfrak{h}^{f(\varphi)}).
\]
The differential of $\mathfrak{g}^{\varphi}$, noted $d_{\varphi}$, is given by
\begin{eqnarray*}
d_{\varphi}(x) & = & \sum_{k\geq 0}\frac{1}{k!}[\varphi^{\wedge k},x] \\
  & = & d_{\mathfrak{g}}(x) + \sum_{k\geq 1}\frac{1}{k!}[\varphi^{\wedge k},x].
\end{eqnarray*}
A similar formula holds for the differential of $\mathfrak{h}^{f(\varphi)}$.
For any integer $p$, if $x\in F_p\mathfrak{g}$, then $\sum_{k\geq 1}\frac{1}{k!}[\varphi^{\wedge k},x]\in F_{p+1}\mathfrak{g}$.
Thus the restriction of $d_{\varphi}$ to the quotient complex $F_p\mathfrak{g}/F_{p+1}\mathfrak{g}$ is the differential of $\mathfrak{g}$.
Now, since the quasi-isomorphim $f$ induces a quasi-isomorphism at each stage of the filtrations,
it induces a quasi-isomorphism $F_p\mathfrak{g}/F_{p+1}\mathfrak{g}\stackrel{\sim}{\rightarrow}F_p\mathfrak{h}/F_{p+1}\mathfrak{h}$.
This follows from the Five Lemma (see the proof of Proposition 2.7 (1)).
We conclude that $f$ induces a quasi-isomorphism of graded complexes
\[
Gr(f):Gr(\mathfrak{g}^{\varphi})=(\bigoplus_p F_p\mathfrak{g}/F_{p+1}\mathfrak{g},d_{\mathfrak{g}})\stackrel{\sim}{\rightarrow} Gr(\mathfrak{h}^{f(\varphi)}
=(\bigoplus_p F_p\mathfrak{h}/F_{p+1}\mathfrak{h},d_{\mathfrak{h}}),
\]
hence an isomorphism between the $E_1$ pages of the corresponding spectral sequences.
\end{proof}
\begin{cor}
Let $f:\mathfrak{g}\stackrel{\sim}{\rightarrow}\mathfrak{h}$ be a quasi-isomorphism of complete $L_{\infty}$ algebras
inducing a quasi-isomorphism at each stage of the filtrations.
Let $\varphi$ be a Maurer-Cartan element in $\mathfrak{g}$.
Then $f$ induces a weak equivalence of Kan complexes
\[
MC_{\bullet}(\mathfrak{g}^{\varphi})\stackrel{\sim}{\rightarrow} MC_{\bullet}(\mathfrak{h}^{f(\varphi)}).
\]
\end{cor}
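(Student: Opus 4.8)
The plan is to obtain this as a formal consequence of the preceding Proposition together with part (2) of Theorem 3.2, so that essentially no new computation is required. The preceding Proposition already does the substantive work: from the filtration-wise quasi-isomorphism $f:\mathfrak{g}\stackrel{\sim}{\rightarrow}\mathfrak{h}$ and the Maurer-Cartan element $\varphi$, it produces a morphism of \emph{twisted} $L_{\infty}$-algebras $f:\mathfrak{g}^{\varphi}\stackrel{\sim}{\rightarrow}\mathfrak{h}^{f(\varphi)}$ which is again a quasi-isomorphism and which restricts to a quasi-isomorphism $F_r\mathfrak{g}^{\varphi}\stackrel{\sim}{\rightarrow}F_r\mathfrak{h}^{f(\varphi)}$ at each stage of the filtration. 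The corollary then amounts to feeding this twisted quasi-isomorphism into the invariance theorem for the simplicial Maurer-Cartan functor.

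Concretely, I would proceed in three short steps. First, invoke the preceding Proposition to replace $f$ by the twisted morphism $f:\mathfrak{g}^{\varphi}\stackrel{\sim}{\rightarrow}\mathfrak{h}^{f(\varphi)}$, a quasi-isomorphism of $L_{\infty}$-algebras inducing quasi-isomorphisms at every filtration stage. Second, record that Theorem 3.2 genuinely applies to this morphism: by Lemma 2.2 the twists $\mathfrak{g}^{\varphi}$ and $\mathfrak{h}^{f(\varphi)}$ are complete $L_{\infty}$-algebras for the \emph{same} filtration of the underlying chain complex, so the hypothesis of completeness is met and the functoriality of $MC_{\bullet}$ along $f$ is formed with respect to the correct filtration. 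Moreover, since twisting changes only the brackets (hence the differential) and leaves the underlying graded vector space untouched, $\mathfrak{g}^{\varphi}$ is concentrated in non-negative degrees exactly when $\mathfrak{g}$ is, and similarly for $\mathfrak{h}^{f(\varphi)}$; thus the non-negativity assumption required by part (2) of Theorem 3.2 is inherited automatically from $\mathfrak{g}$ and $\mathfrak{h}$. Third, apply part (2) of Theorem 3.2 to $\mathfrak{g}^{\varphi}\stackrel{\sim}{\rightarrow}\mathfrak{h}^{f(\varphi)}$ to conclude that $MC_{\bullet}(\mathfrak{g}^{\varphi})\stackrel{\sim}{\rightarrow}MC_{\bullet}(\mathfrak{h}^{f(\varphi)})$ is a weak equivalence of Kan complexes.

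Because the spectral-sequence comparison underlying the twisted quasi-isomorphism and the tower-of-fibrations machinery behind Theorem 3.2 have both already been established, the only genuine point needing attention is the bookkeeping of hypotheses. The step I expect to require the most care is verifying that the filtration used to define $MC_{\bullet}(\mathfrak{g}^{\varphi})$ is precisely the one for which Theorem 3.2 was proved, and that the non-negative degree hypothesis of that theorem transfers to the twisted objects; both are guaranteed by Lemma 2.2, which identifies the filtration of the twist with that of the underlying chain complex. Apart from this transfer of hypotheses, there is no further obstacle: the corollary is a direct specialization of the general invariance result to the twisted algebras.
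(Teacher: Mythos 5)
Your proposal is correct and coincides with the paper's (implicit) proof: the corollary is obtained by feeding the twisted quasi-isomorphism $\mathfrak{g}^{\varphi}\stackrel{\sim}{\rightarrow}\mathfrak{h}^{f(\varphi)}$ from the preceding Proposition into part (2) of Theorem 3.2, with Lemma 2.2 guaranteeing that the twists are complete for the same filtration. Your bookkeeping of the non-negative degree hypothesis is exactly the right point to watch --- note that this hypothesis of Theorem 3.2 (2) is tacitly carried along here (the corollary's statement omits it, just as the paper does), so strictly speaking it must be assumed of $\mathfrak{g}$ and $\mathfrak{h}$, whence, as you observe, it passes to the twisted algebras since twisting leaves the underlying graded vector space unchanged.
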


\section{The Maurer-Cartan moduli set in algebraic geometry}

We refer the reader to \cite{Gom}, \cite{LM} and \cite{Neu} for the notions of pseudo-functors, prestacks and stacks.
In this section, we compare the notion of global tangent space of the deformation functor associated to a filtered dg Lie algebra with the notion
of tangent complex associated to the quotient stack of the Maurer-Cartan scheme by the action of the gauge group.
We then explain that our result applies to a wide range of deformation complexes of algebraic structures (encoded by Koszul dg properads), and conclude with
a remark about a possible generalization to $L_{\infty}$ deformation complexes.

\subsection{Action groupoids, group schemes and quotient stacks}

In a given category $\mathcal{C}$, there is a general notion of action groupoid associated to the action $\rho:G\times X\rightarrow X$ of a group object $G$
on an object $X$. This is a groupoid object in $\mathcal{C}$ whose "set of objects" is $X$, "set of morphisms" is $X\times G$, the source map $s:X\times G\rightarrow X$
is the projection on the first component and the target map $t=\rho:X\times G\rightarrow X$ is the action of $G$ on $X$. The composition
\[
(X\times G)\times_{s,t}(X\times G)\cong X\times G\times G\rightarrow X\times G
\]
is given by the multiplication of $G$.

In the particular case where $\mathcal{C}=Sch/S$ is the category of schemes over a scheme $S$, for $X$ a noetherian $S$-scheme and $G$ a smooth
affine group $S$-scheme acting on $X$, we obtain a groupoid scheme noted $[X\times G,X]'$ which forms a prestack.
The corresponding pseudo-functor of groupoids
\[
[X\times G,X]':(Sch/S)^{op}\rightarrow Grpd,
\]
where $Grpd$ is the $2$-category of groupoids, is defined by sending any $S$-scheme $U$ to the action groupoid of $G(U)$ on $X(U)$
(we identify the schemes $X$ and $G$ with their functor of points).
The stackification of this prestack gives an algebraic stack $[X\times G,X]$, which is a meaningful candidate for the notion of quotient stack
associated to the action of $G$ on $X$. It turns out that this stack is, indeed, naturally isomorphic to the usual quotient stack $[X/G]$.
The sheaf of groupoids
\[
[X/G]:(Sch/S)^{op}\rightarrow Grpd
\]
defining this algebraic stack sends any $S$-scheme $U$ to a groupoid $[X/G](U)$ whose objects are the zigzags
\[
U\stackrel{\pi}{\leftarrow}E\stackrel{\alpha}{\rightarrow} X
\]
where $\pi$ is a $G$-torsor (or principal $G$-bundle) and $\alpha$ a $G$-equivariant scheme morphism. The morphisms are the $G$-torsor isomorphisms
$f:E\stackrel{\cong}{\rightarrow}E'$ such that $\alpha'\circ f=\alpha$. This example is presented for instance as Examples 2.18, 2.20 and
2.29 in \cite{Gom}, and Example 2.17 in \cite{Neu}.

\subsection{Deformation functors as prestacks}

We denote by $Alg_{\mathbb{K}}$ the category of unitary commutative $\mathbb{K}$-algebras.

\subsubsection{The Maurer-Cartan variety}
Let $\mathfrak{g}$ be a complete dg Lie algebra such that the vector space $\mathfrak{g}^1$ of degree $1$ elements is of finite dimension.
Then, one can choose a basis for $\mathfrak{g}^1$ and the Maurer-Cartan equation then becomes a system of polynomial equations.
The Maurer-Cartan elements thus consists of the locus of these polynomials and form an affine algebraic variety.
Its functor of points is given by
\[
MC(\mathfrak{g}\hat{\otimes}-):Alg_{\mathbb{K}}\rightarrow Set.
\]
The tangent spaces of the Maurer-Cartan variety are easily computed as follows:
\begin{lem}
For every Maurer-Cartan element $\varphi$ of $\mathfrak{g}$, the tangent space of $MC(\mathfrak{g})$ at $\varphi$ is noted $T_{\varphi}MC(\mathfrak{g})$ and given by
\[
T_{\varphi}MC(\mathfrak{g}) = Z^1(\mathfrak{g}^{\varphi})
\]
the vector space of $1$-cocycles of $\mathfrak{g}^{\varphi}$.
\end{lem}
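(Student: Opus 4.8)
The plan is to compute the tangent space at a Maurer-Cartan element $\varphi$ directly from the functor of points by evaluating on the dual numbers $\mathbb{K}[\epsilon]/(\epsilon^2)$. By definition of the Zariski tangent space of an affine scheme, $T_{\varphi}MC(\mathfrak{g})$ is the fiber over $\varphi$ of the map
\[
MC(\mathfrak{g}\hat{\otimes}\mathbb{K}[\epsilon]/(\epsilon^2)) \to MC(\mathfrak{g})
\]
induced by the augmentation $\mathbb{K}[\epsilon]/(\epsilon^2)\twoheadrightarrow\mathbb{K}$ sending $\epsilon$ to $0$. Concretely, a tangent vector at $\varphi$ is a Maurer-Cartan element of $\mathfrak{g}\otimes\mathbb{K}[\epsilon]/(\epsilon^2)$ of the form $\varphi + x\epsilon$ with $x\in\mathfrak{g}^1$, where I use that $\mathbb{K}[\epsilon]/(\epsilon^2)$ is artinian so the completed and ordinary tensor products agree by Corollary 2.5.

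First I would plug $\varphi + x\epsilon$ into the Maurer-Cartan equation $\mathcal{F}(\tau)=\sum_{k\geq 1}\frac{1}{k!}[\tau^{\wedge k}]=0$ and expand, keeping track of powers of $\epsilon$. Since $\epsilon^2=0$, the multilinearity and graded symmetry of the brackets give
\[
\mathcal{F}(\varphi+x\epsilon) = \mathcal{F}(\varphi) + \Big(\sum_{k\geq 1}\frac{1}{(k-1)!}[\varphi^{\wedge(k-1)},x]\Big)\epsilon.
\]
The constant term vanishes because $\varphi$ is a Maurer-Cartan element. The coefficient of $\epsilon$ is precisely the twisted differential $d_{\varphi}(x)=\sum_{i\geq 0}\frac{1}{i!}[\varphi^{\wedge i},x]$ of the twisted $L_{\infty}$-algebra $\mathfrak{g}^{\varphi}$ from Definition 2.1 (after reindexing $i=k-1$). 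Hence $\varphi+x\epsilon$ is a Maurer-Cartan element if and only if $d_{\varphi}(x)=0$, that is, $x\in Z^1(\mathfrak{g}^{\varphi})$.

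Next I would observe that completeness of $\mathfrak{g}$ guarantees the relevant series converge, and that Lemma 2.2 ensures $\mathfrak{g}^{\varphi}$ is again a complete $L_{\infty}$-algebra so that $d_{\varphi}$ is a genuine differential; thus $Z^1(\mathfrak{g}^{\varphi})=\ker(d_{\varphi}\colon\mathfrak{g}^1\to\mathfrak{g}^2)$ is well defined. This identifies the set of $\epsilon$-linear deformations of $\varphi$ with the vector space of $1$-cocycles of $\mathfrak{g}^{\varphi}$, establishing the isomorphism $T_{\varphi}MC(\mathfrak{g})\cong Z^1(\mathfrak{g}^{\varphi})$. The main subtlety to handle carefully is the bookkeeping in the expansion of $\mathcal{F}(\varphi+x\epsilon)$: one must verify that every term with $k\geq 2$ contributes exactly $k$ copies upon differentiation in the $\epsilon$-direction, and that the symmetry signs in the generalized Jacobi setup combine so that these collapse into the single twisted bracket sum. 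This is routine given the sign conventions, but it is where all the care is needed.
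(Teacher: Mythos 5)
Your proposal is correct and follows essentially the same route as the paper: identify $T_{\varphi}MC(\mathfrak{g})$ with Maurer--Cartan elements of the form $\varphi+x\epsilon$ in $\mathfrak{g}\otimes\mathbb{K}[\epsilon]/(\epsilon^2)$ (using the artinian case to drop the completion), expand the Maurer--Cartan equation, and recognize the $\epsilon$-coefficient as $d_{\varphi}(x)$. The only difference is that you expand the full $L_{\infty}$ series $\mathcal{F}(\varphi+x\epsilon)$, which subsumes the paper's computation with $d(x)+\frac{1}{2}[x,x]$ in the dg Lie case relevant here (and your sign bookkeeping works, since two degree-$1$ inputs commute under the paper's symmetry convention, giving the factor $k$ that produces $\frac{1}{(k-1)!}$).
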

\begin{proof}
The tangent space $T_{\varphi}MC(\mathfrak{g})$ is the set of scheme morphisms
\[
Spec(\mathbb{K}[t]/(t^2))\rightarrow MC(\mathfrak{g})
\]
in the slice category $Spec(\mathbb{K})/Sch$ of schemes under
$Spec(\mathbb{K})$, where the morphism $Spec(\mathbb{K})\rightarrow MC(\mathfrak{g})$ corresponds to the Maurer-Cartan element $\varphi\in MC(\mathfrak{g})$ and the
inclusion of a closed point $Spec(\mathbb{K})\hookrightarrow Spec(\mathbb{K}[t]/(t^2))$ is induced by the algebra augmentation $\mathbb{K}[t]/(t^2)\rightarrow\mathbb{K}$.
Thanks to Corollary 2.5, it is identified with the subset of $MC(\mathfrak{g}\otimes\mathbb{K}[t]/(t^2))$ of Maurer-Cartan elements of the form $\varphi+\varphi_1t$.
An element $\varphi+\varphi_1t$ of $\mathfrak{g}\oplus \mathfrak{g}\otimes\mathbb{K}.t$ is a Maurer-Cartan element if and only if $|\varphi_1|=|\varphi|=1$ and
\[
d(\varphi+\varphi_1t)+\frac{1}{2}[\varphi+\varphi_1t,\varphi+\varphi_1t]=0.
\]
We have
\begin{eqnarray*}
 & d(\varphi+\varphi_1t)+\frac{1}{2}[\varphi+\varphi_1t,\varphi+\varphi_1t] & \\
 = & d\varphi+(d\varphi_1)t+\frac{1}{2}([\varphi,\varphi]+[\varphi,\varphi_1]t
+[\varphi_1,\varphi]t+[\varphi_1,\varphi_1]t^2) & \\
 = & d\varphi + \frac{1}{2}[\varphi,\varphi] + (d\varphi_1)t + \frac{1}{2}([\varphi,\varphi_1]-(-1)^{|\varphi||\varphi_1|}[\varphi,\varphi_1])t & \\
 = & (d\varphi_1)t + [\varphi,\varphi_1]t & \\
 = & (d_{\varphi}\varphi_1)t. & \\
\end{eqnarray*}
The second line of these equalities holds because $t^2=0$ and the Lie bracket is antisymmetric, the third line holds because $\varphi$ satisfies the Maurer-Cartan
equation in $\mathfrak{g}$ and $|\varphi|=|\varphi_1|=1$, and the last line holds by definition of $d_{\varphi}$.
Hence the equation
\[
d(\varphi+\varphi_1t)+\frac{1}{2}[\varphi+\varphi_1t,\varphi+\varphi_1t]=0
\]
is equivalent to
\[
d_{\varphi}\varphi_1=0,
\]
which means that $\varphi_1$ is a $1$-cocycle of $\mathfrak{g}^{\varphi}$.
\end{proof}

\subsubsection{The deformation functor of a complete dg Lie algebra}
For every $A\in Art_{\mathbb{K}}$, the action of $exp(\mathfrak{g}^0\otimes m_A)$ on $MC(\mathfrak{g}\otimes m_A)$ is functorial in $A$,
so that the quotient gives a well defined functor
\[
MC(\mathfrak{g}\hat{\otimes}-)/exp(\mathfrak{g}^0\otimes -):Art_{\mathbb{K}}\rightarrow Set.
\]
naturally isomorphic to the functor $\pi_0MC_{\bullet}(\mathfrak{g}\otimes-)$ of moduli sets of Maurer-Cartan elements defined before.
This functor of artinian algebras is called the deformation functor of $\mathfrak{g}$ and noted $Def_{\mathfrak{g}}$.

The tangent space $t_{Def_{\mathfrak{g}}}$ of this deformation functor, in the sense of \cite{Sch}, is the evaluation $Def_{\mathfrak{g}}(\mathbb{K}[t]/(t^2))$ of $Def_{\mathfrak{g}}$ on the
algebra of dual numbers. A computation analoguous to the proof of Lemma 4.1 shows that $t_{MC(\mathfrak{g})}=Z^1\mathfrak{g}$, and $t_{exp(\mathfrak{g}^0)}=\mathfrak{g}^0\otimes\mathbb{K}t$ acts on
$t_{MC(\mathfrak{g})}$ via the differential $d:\mathfrak{g}^0\rightarrow Z^1\mathfrak{g}$, hence $t_{Def_{\mathfrak{g}}}=H^1\mathfrak{g}$ (see Section 3 of \cite{Man}).

We recall the following classical result:
\begin{thm}(Cartier \cite{Car})
Every algebraic group over a field of characteristic zero is smooth.
\end{thm}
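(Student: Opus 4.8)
The plan is to reduce the statement to a property of the local ring at the identity and then invoke the structure theory of Hopf algebras in characteristic zero, which is where the hypothesis $\operatorname{char} k = 0$ is genuinely used. First I would recall that, for a group scheme $G$ locally of finite type over a field $k$, smoothness is equivalent to geometric reducedness, and that both notions are detected after base change to the algebraic closure $\bar k$, which is again of characteristic zero; hence one may assume $k = \bar k$. Next, since the translations $L_g : G \to G$ by $\bar k$-points are isomorphisms of schemes, $G$ is homogeneous: the local rings at all closed points are isomorphic, and the closed points are dense. Consequently it suffices to prove that the local ring $\mathcal O_{G,e}$ at the identity $e$ is regular (equivalently reduced), the regularity then propagating to every point by translation.

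Second, I would produce the reduced subgroup. Because $k$ is perfect, the product of two reduced $k$-schemes of finite type is again reduced, so $G_{\mathrm{red}} \times_k G_{\mathrm{red}}$ is reduced. Since $G_{\mathrm{red}} \hookrightarrow G$ is the maximal reduced closed subscheme, the multiplication $G_{\mathrm{red}} \times_k G_{\mathrm{red}} \to G$ factors through $G_{\mathrm{red}}$, and together with the inverse this makes $G_{\mathrm{red}}$ a closed subgroup scheme (dually, the nilradical $N$ is a Hopf ideal: $\Delta(N) \subseteq N \otimes A + A \otimes N$ since the composite $A \xrightarrow{\Delta} A \otimes A \to (A/N)\otimes(A/N)$ kills nilpotents). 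Being reduced and of finite type over the algebraically closed field $k$, $G_{\mathrm{red}}$ has a dense smooth open, hence is smooth everywhere by homogeneity; in particular $\mathcal O_{G_{\mathrm{red}},e}$ is regular.

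The remaining and essential point is to show $N = 0$, that is $G = G_{\mathrm{red}}$, and this is exactly where characteristic zero enters. I would pass to the formal completion $\hat{\mathcal O}_{G,e}$, which is a complete local Hopf algebra (the coordinate ring of the formal group $\hat G_e$), and consider the associated graded $\operatorname{gr}_{\mathfrak m}\hat{\mathcal O}_{G,e}$ for the $\mathfrak m$-adic filtration. Since $\Delta(\mathfrak m) \subseteq \mathfrak m \otimes A + A \otimes \mathfrak m$, this filtration is multiplicative and compatible with $\Delta$, so $\operatorname{gr}_{\mathfrak m}\hat{\mathcal O}_{G,e}$ is a graded connected commutative Hopf algebra generated in degree one. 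By the structure theorem for such Hopf algebras in characteristic zero (Leray/Hopf) this graded algebra is a polynomial algebra; equivalently, a nonzero primitive element $x$ cannot be nilpotent, because if $m$ is least with $x^m = 0$ then $\Delta(x^m)$ contains the term $m\, x \otimes x^{m-1}$ with $m \neq 0$ and $x^{m-1} \neq 0$, a contradiction. A Noetherian local ring whose associated graded is polynomial is regular, so $\hat{\mathcal O}_{G,e}$ and hence $\mathcal O_{G,e}$ is regular, forcing $N = 0$.

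I expect the main obstacle to be precisely this last step: packaging the phenomenon that there are no nontrivial infinitesimal group schemes in characteristic zero—via the Hopf-algebraic fact that primitive elements are non-nilpotent—and then transferring polynomiality of $\operatorname{gr}_{\mathfrak m}\hat{\mathcal O}_{G,e}$ back to regularity of $\mathcal O_{G,e}$. The earlier reductions (base change to $\bar k$, homogeneity, and the reduced-subgroup construction) are formal, but the vanishing of the infinitesimal part is the genuine content of Cartier's theorem and the only place the characteristic hypothesis is indispensable.
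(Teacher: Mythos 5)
The paper does not actually prove this statement: Theorem 4.4 is recalled as a classical result with a bare citation to Cartier's paper, so there is no internal argument to compare yours against line by line. What you have written is, in substance, the standard modern proof (the one found in Oort's paper and in Milne's book): descend to $k=\bar{k}$, use translation homogeneity to reduce to regularity of $\mathcal{O}_{G,e}$, pass to the completion, and show that $\operatorname{gr}_{\mathfrak{m}}\hat{\mathcal{O}}_{G,e}$ is a polynomial ring via the Hopf structure in characteristic zero. The reductions in your first two paragraphs are correct (though the construction of $G_{\mathrm{red}}$ is logically dispensable: your final step proves $\mathcal{O}_{G,e}$ regular directly, which already forces $N=0$), and the overall architecture is sound.

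Two points in the key step deserve tightening. First, your ``equivalently'' is only an implication in one direction: polynomiality of $\operatorname{gr}_{\mathfrak{m}}\hat{\mathcal{O}}_{G,e}$ implies that nonzero primitives are non-nilpotent, and your binomial computation correctly proves the latter (in the graded setting the bihomogeneous components $\binom{m}{i}\,x^{i}\otimes x^{m-i}$ must vanish separately, and $\binom{m}{i}\neq 0$ in characteristic zero), but non-nilpotence of primitives does not by itself yield freeness --- one must also exclude relations among distinct degree-one generators, and that exclusion is precisely the content of the Leray structure theorem you cite. So the load must be carried by Leray's theorem, with the binomial computation as an illustration, not a substitute. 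Second, a regrading point: with $\mathfrak{m}/\mathfrak{m}^{2}$ placed in degree one, the structure theorems for graded Hopf algebras are stated for graded-commutative algebras with Koszul signs, whereas $\operatorname{gr}\hat{\mathcal{O}}_{G,e}$ is commutative in the ungraded sense; the standard fix is to double the grading so that everything sits in even degree, where graded-commutative means commutative and Leray's theorem applies verbatim, giving a polynomial algebra on the indecomposables, which coincide with $\mathfrak{m}/\mathfrak{m}^{2}$ since the algebra is generated in the bottom degree. With these two clarifications the rest closes as you say: a Noetherian local ring whose associated graded is polynomial on $\dim_{k}\mathfrak{m}/\mathfrak{m}^{2}$ generators is regular, regularity descends from $\hat{\mathcal{O}}_{G,e}$ to $\mathcal{O}_{G,e}$, translation and localization spread regularity to all points, and smoothness over $\bar{k}$ descends to $k$.
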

An algebraic group over a field of characteristic zero is thus a particular case of smooth affine group scheme.
We consider the case of $exp(\mathfrak{g}^0)$, whose associated functor of points is $exp(\mathfrak{g}^0\otimes-)$.
The deformation functor of $\mathfrak{g}$ can be trivially extended to all algebras, and one makes it into a pseudofunctor with values in groupoids
\[
\underline{Def}_{\mathfrak{g}}:Alg_{\mathbb{K}}\rightarrow Grpd
\]
which sends any commutative algebra $A$ to the Deligne groupoid of $\mathfrak{g}\hat{\otimes}m_A$, that is,
the small category whose objects are the Maurer-Cartan elements of $\mathfrak{g}\hat{\otimes}m_A$ and morphisms are
given by the action of the gauge group $exp(\mathfrak{g}^0\otimes m_A)$ (hence they are all invertible).
This is a special case of the notion of action groupoid defined in Section 3.1 for the action of an affine group scheme on a scheme,
hence $\underline{Def}_{\mathfrak{g}}$ is a prestack whose stackification is naturally isomorphic to the quotient stack $[MC(\mathfrak{g})/exp(\mathfrak{g}^0)]$.

The question that naturally arises from such an interpretation is: what is the link between the notion of tangent space of a deformation functor $Def_{\mathfrak{g}}$
in the sense of \cite{Sch} and the notion of tangent complex of the quotient stack $[MC(\mathfrak{g})/exp(\mathfrak{g}^0)]$ in algebraic geometry ?

\subsection{The (truncated) tangent complex of a quotient stack}

Let us denote by $pt$ the affine scheme $Spec(\mathbb{K})$ and $D$ the affine scheme $Spec(\mathbb{K}[t]/(t^2))$ (the infinitesimal disk).
The scheme $D$ is equipped with operations
\[
+:D\rightarrow D\coprod_{pt}D
\]
and, for every $\lambda\in\mathbb{K}$,
\[
\lambda.(-):D\rightarrow D
\]
coming respectively from
\begin{eqnarray*}
\mathbb{K}[t_1,t_2]/(t_1^2,t_2^2,t_1t_2) & \rightarrow & \mathbb{K}[t]/(t^2) \\
a+bt_1+ct_2 & \mapsto & a+(b+c)t
\end{eqnarray*}
and
\begin{eqnarray*}
\mathbb{K}[t]/(t^2) & \rightarrow & \mathbb{K}[t]/(t^2) \\
a+bt & \mapsto & a+\lambda b t.
\end{eqnarray*}
Let X be an algebraic stack. Let us fix, once and for all, a $\mathbb{K}$-point $x:pt\rightarrow X$. Tangent vectors at $x$ are the lifts $D\rightarrow X$ of $x$
and forms a subgroupoid $T_{X,x}=Mor_{pt/St}(D,X)$ of $Mor_{St}(D,X)$. It is the fiber at $x$ of the map $Mor_{St}(D,X)\rightarrow Mor_{St}(pt,X)$ given by precomposition
with the inclusion of the point.
The operations $+$ and $\lambda.(-)$ of $D$ induce functors
\[
+:T_{X,x}\times T_{X,x}\rightarrow T_{X,x}
\]
and
\[
\lambda.(-):T_{X,x}\rightarrow T_{X,x}
\]
so that $T_{X,x}$ forms a Picard category \cite{Del}.
Such a Picard category corresponds to a two-term chain complex $\mathbb{T}_{X,x}:T_{X,x}^{-1}\stackrel{d}{\rightarrow} T_{X,x}^0$ concentrated in degrees $-1$ and $0$.
\begin{defn}
The complex $\mathbb{T}_{X,x}$ is the (truncated) tangent complex of $X$ at $x$.
\end{defn}
The $0^{th}$ cohomology group $H^0\mathbb{T}_{X,x}$ is the group of isomorphism classes of lifts $D\rightarrow X$ of $x$, and $H^{-1}\mathbb{T}_{X,x}$ is the Lie algebra
of the automorphism group of $x$.
\begin{rem}
In the converse direction, given a two-term complex $C:C^{-1}\stackrel{d}{\rightarrow}C^0$, one can associate a Picard category $\mathcal{C}$ defined
by $ob(\mathcal{C})=C^0$ and $Mor_{\mathcal{C}}(x,y)=\{f\in C^{-1}|df=y-x\}$. Then $H^0$ is the group of isomorphism classes of objects of $\mathcal{C}$
and $H^{-1}C$ is the automorphism group of the unit $Aut(1_{\mathcal{C}})$.
\end{rem}
There is actually a categorical equivalence between such kind of complexes and Picard stacks stated more precisely as Proposition 1.4.15 in \cite{Del}.

Now we consider the particular case of a quotient stack $[X/G]$, where $X$ is a noetherian scheme and $G$ a smooth algebraic group acting on $X$.
This algebraic stack admits the following atlas. Consider the object of $[X/G](X)$ defined by the trivial $G$-torsor $G\times X\rightarrow X$
and the group action $G\times X\rightarrow X$. Via the natural isomorphism $[X/G](X)\cong Mor_{St}(X,[X/G])$, it is sent to a surjective map
$u:X\twoheadrightarrow [X/G]$ which forms the desired atlas (see Example 2.29 in \cite{Gom}).
Two lifts $x',x'':pt\rightarrow X$ of a given $\mathbb{K}$-point $x:pt\rightarrow [X/G]$ along this atlas lie in the same orbit under the action of $G$, that is,
the $\mathbb{K}$-points of $[X/G]$ represent the orbits of the action of $G$ and the automorphism group of each such point $x$ is the stabilizer group $G_x$.
Let $x$ be a point of $[X/G]$ and $x'\in X(\mathbb{K})$ be a lifting of $x$.
Let $\phi:G\rightarrow X$ be the scheme morphism defined by sending each $g\in G$ to $x'.g\in X$. Since $\phi$ sends the neutral element
$e$ of $G$ to $x'$, its differential gives a map of tangent spaces
\[
d\phi:T_eG\rightarrow T_{x'}X
\]
which is exactly the tangent complex we are searching for. Since the automorphism group of $x$ is $G_x$, we have $H^{-1}\mathbb{T}_{[X/G],x}=Lie(G_x)$.
Isomorphism classes of infinitesimal deformations of $x$ are given by $H^0\mathbb{T}_{[X/G],x}=T_xX/Im(d\phi)$, which is what one should expect:
two deformations are equivalent if and only if they are related by the action of $G$.

Let $\mathfrak{g}$ be a complete dg Lie algebra such that $\mathfrak{g}^1$ is of finite dimension. Then $MC(\mathfrak{g})$ is an affine algebraic variety with an action of the gauge group $exp(\mathfrak{g}^0)$.
Recall that $\mathfrak{g}^0$ is the Lie algebra of $exp(\mathfrak{g}^0)$ and $T_{\varphi}MC(\mathfrak{g})=Z^1(\mathfrak{g}^{\varphi})$ according to Lemma 4.1.
A point of the quotient stack $[MC(\mathfrak{g})/exp(\mathfrak{g}^0)]$ then represents an equivalence class of Maurer-Cartan elements.
\begin{lem}
The tangent complex of $[MC(\mathfrak{g})/exp(\mathfrak{g}^0)]$ at a point with chosen representative $\varphi$
is the two components chain complex $-d_{\varphi}:\mathfrak{g}^0\rightarrow Z^1(\mathfrak{g}^{\varphi})$, where $d_{\varphi}$ is the differential
of $\mathfrak{g}^{\varphi}$.
\end{lem}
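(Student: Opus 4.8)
The plan is to specialize the general description of the tangent complex of a quotient stack, established in the preceding paragraphs, to the case $X=MC(\mathfrak{g})$ and $G=exp(\mathfrak{g}^0)$. Recall that for a quotient stack $[X/G]$ and a $\mathbb{K}$-point $x$ with lift $x'\in X(\mathbb{K})$, the tangent complex at $x$ is the two-term complex $d\phi:T_eG\rightarrow T_{x'}X$, where $\phi:G\rightarrow X$ is the orbit map $g\mapsto x'.g$ and $T_eG=Lie(G)$. Here the lift is the chosen representative $\varphi$, the Lie algebra of $exp(\mathfrak{g}^0)$ is $\mathfrak{g}^0$, and by Lemma 4.1 the tangent space $T_{\varphi}MC(\mathfrak{g})$ is $Z^1(\mathfrak{g}^{\varphi})$. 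Thus the only thing left to compute is the differential $d\phi:\mathfrak{g}^0\rightarrow Z^1(\mathfrak{g}^{\varphi})$ of the orbit map at the identity, and I must check that it coincides with $-d_{\varphi}$.

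To compute $d\phi$ at the identity in the direction of $\xi\in\mathfrak{g}^0$, I would evaluate the orbit map along the one-parameter subgroup $t\mapsto exp(t\xi)$, which passes through the neutral element with velocity $\xi$ at $t=0$, and differentiate at $t=0$. Substituting $t\xi$ for $\xi$ in the explicit gauge action formula of Section 2.3 gives
\[
exp(t\xi).\varphi = e^{t[\xi,-]}(\varphi) - \frac{e^{t[\xi,-]}-id}{[\xi,-]}(\delta_{\mathfrak{g}}\xi),
\]
which is exactly the series already used as a homotopy in the proof of Proposition 2.8. Differentiating at $t=0$, the first term contributes $[\xi,\varphi]$, and since $\frac{e^{tA}-id}{A}=t\,id+\frac{t^2}{2}A+\cdots$ as a power series in the operator $A=[\xi,-]$, the second term contributes $-\delta_{\mathfrak{g}}\xi$. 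Hence $d\phi(\xi)=[\xi,\varphi]-\delta_{\mathfrak{g}}\xi$.

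It then remains to compare this with $d_{\varphi}$. In the dg Lie case the twisted differential reduces to $d_{\varphi}(\xi)=\delta_{\mathfrak{g}}\xi+[\varphi,\xi]$, and since $\xi$ has even degree the antisymmetry of the bracket gives $[\xi,\varphi]=-[\varphi,\xi]$; therefore $d\phi(\xi)=-\delta_{\mathfrak{g}}\xi+[\xi,\varphi]=-(\delta_{\mathfrak{g}}\xi+[\varphi,\xi])=-d_{\varphi}(\xi)$, as desired. That the image lands in $Z^1(\mathfrak{g}^{\varphi})$ is automatic, since $\phi$ takes values in $MC(\mathfrak{g})$ so $d\phi$ takes values in $T_{\varphi}MC(\mathfrak{g})=Z^1(\mathfrak{g}^{\varphi})$, equivalently $d_{\varphi}^2=0$ because $\varphi$ is a Maurer-Cartan element. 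I expect the only delicate point to be the bookkeeping of the operator power series $\frac{e^{t[\xi,-]}-id}{[\xi,-]}$ together with the sign conventions for the graded bracket, rather than anything conceptually difficult; everything else follows directly from the general quotient-stack computation and Lemma 4.1.
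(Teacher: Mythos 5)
Your proposal is correct and takes essentially the same approach as the paper: both reduce the lemma to the differential at the identity of the orbit map $o_{\varphi}:exp(\mathfrak{g}^0)\rightarrow MC(\mathfrak{g})$, using the general quotient-stack description of the tangent complex together with Lemma 4.1, and then linearize the explicit gauge action formula to get $\xi\mapsto[\xi,\varphi]-\delta_{\mathfrak{g}}\xi=-d_{\varphi}\xi$. The only cosmetic difference is that you differentiate the one-parameter family $exp(t\xi).\varphi$ at $t=0$, whereas the paper carries out the equivalent first-order computation over the dual numbers $\mathbb{K}[t]/(t^2)$ with the Yoneda-lemma bookkeeping made explicit.
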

\begin{proof}
We consider the orbit map $o_{\varphi}:exp(\mathfrak{g}^0)\rightarrow MC(\mathfrak{g})$ which sends any $exp(\xi)\in exp(\mathfrak{g}^0)$
to $\varphi.exp(\xi)$. This is a morphism of algebraic varieties. The corresponding natural transformation
between the functor of points $exp(\mathfrak{g}^0\hat{\otimes}-)$ and $MC(\mathfrak{g}\hat{\otimes}-)$, also noted $o_{\varphi}$,
is defined by $o_{\varphi}(A)=o_{\varphi\hat{\otimes}1_A}$.

Since $o_{\varphi}:exp(\mathfrak{g}^0)\rightarrow MC(\mathfrak{g})$ sends the neutral element $exp(0)$ to $\varphi$,
it defines a tangent map $d_{exp(0)}o_{\varphi}:T_{exp(0)}exp(\mathfrak{g}^0)\rightarrow T_{\varphi}MC(\mathfrak{g})$.
The tangent spaces are defined by
\[
T_{exp(0)}exp(\mathfrak{g}^0)=Mor_{Spec(\mathbb{K})/Sch}(Spec(\mathbb{K}[t]/(t^2)),exp(\mathfrak{g}^0))
\]
and
\[
T_{\varphi}MC(\mathfrak{g})= Mor_{Spec(\mathbb{K})/Sch}(Spec(\mathbb{K}[t]/(t^2)),MC(\mathfrak{g})),
\]
where $Spec(\mathbb{K})/Sch$ is the category of schemes under $Spec(\mathbb{K})$, and the $\mathbb{K}$-points
$Spec(\mathbb{K})\rightarrow exp(\mathfrak{g}^0)$ and $Spec(\mathbb{K})\rightarrow MC(\mathfrak{g})$ correspond respectively
to $exp(0)$ and $\varphi$ via the Yoneda Lemma.
The map $d_{exp(0)}o_{\varphi}$ is then given by composing a morphism
$Spec(\mathbb{K}[t]/(t^2))\rightarrow exp(\mathfrak{g}^0)$ under $Spec(\mathbb{K})$ with $o_{\varphi}$.

Recall that according to the Yoneda Lemma, there is a bijection
\[
Nat(Hom_{Alg_{\mathbb{K}}}(\mathbb{K}[t]/(t^2),-),MC(\mathfrak{g}\hat{\otimes}-)\stackrel{\cong}{\rightarrow}
MC(\mathfrak{g}\otimes\mathbb{K}[t]/(t^2))
\]
sending a natural transformation $\tau$ to $\tau(\mathbb{K}[t]/(t^2))(id)$, whose inverse sends
an element $x\in MC(\mathfrak{g}\otimes\mathbb{K}[t]/(t^2))$ to the natural transformation $\tau_x$ defined
by $\tau_x(f)=MC(\mathfrak{g}\hat{\otimes}f)(x)$.
The tangent map $d_{exp(0)}o_{\varphi}$ fits in a commutative square
\[
\xymatrix{
T_{exp(0)}exp(\mathfrak{g}^0) \ar[r]^-{d_{exp(0)}o_{\varphi}} & T_{\varphi}MC(\mathfrak{g})\ar[d]^{\cong} \\
\{exp(0)+exp(\xi)t,\xi\in \mathfrak{g}^0\}\ar[r]  \ar[u]^-{\cong}& \{\varphi+\varphi_1t\in MC(\mathfrak{g}\otimes\mathbb{K}[t]/(t^2))\}.
}
\]
The bijection
\[
\{exp(0)+exp(\xi)t,\xi\in \mathfrak{g}^0\}\rightarrow Mor_{Spec(\mathbb{K})/Sch}(Spec(\mathbb{K}[t]/(t^2)),exp(\mathfrak{g}^0))
\]
sends every $exp(0)+exp(\xi)t$ to a natural transformation defined by
\[
f\in Spec(\mathbb{K}[t]/(t^2))(A)\mapsto exp(\mathfrak{g}^0\hat{\otimes}f)(exp(0)+exp(\xi)t)
\]
for every $\mathbb{K}$-algebra $A$.
The tangent map then sends it to the natural transformation defined by
\[
f\in Spec(\mathbb{K}[t]/(t^2))(A)\mapsto o_{\varphi}(A)(exp(\mathfrak{g}^0\hat{\otimes}f)(exp(0)+exp(\xi)t)).
\]
Finally, the bijection
\[
Mor_{Spec(\mathbb{K})/Sch}(Spec(\mathbb{K}[t]/(t^2)),MC(\mathfrak{g}))\mapsto \{\varphi+\varphi_1t\in MC(\mathfrak{g}\otimes\mathbb{K}[t]/(t^2))\}
\]
sends this natural transformation to $o_{\varphi}(\mathbb{K}[t]/(t^2))(exp(\mathfrak{g}^0\hat{\otimes}id)(exp(0)+exp(\xi)t))$,
which is nothing but $o_{\varphi}(\mathbb{K}[t]/(t^2))(exp(0)+exp(\xi)t)$.
The map
\[
d_{exp(0)}o_{\varphi}:\{exp(0)+exp(\xi)t,\xi\in \mathfrak{g}^0\}\rightarrow\{\varphi+\varphi_1t\in MC(\mathfrak{g}\otimes\mathbb{K}[t]/(t^2))\}
\]
is consequently the restriction of $o(\mathbb{K}[t]/(t^2)):exp(\mathfrak{g}^0\otimes\mathbb{K}[t]/(t^2))
\rightarrow MC(\mathfrak{g}\otimes \mathbb{K}[t]/(t^2))$ to $\{exp(0)+exp(\xi)t,\xi\in \mathfrak{g}^0\}$.

Now, by definition $o(\mathbb{K}[t]/(t^2))=o_{\varphi\otimes 1_{\mathbb{K}[t]/(t^2)}}=o_{\varphi\oplus 0.t}$.
Moreover, for any $exp(0)+exp(\xi)t\in exp(\mathfrak{g}^0\otimes\mathbb{K}[t]/(t^2))$ and any
$\varphi+\varphi_1t\in MC(\mathfrak{g}\otimes\mathbb{K}[t]/(t^2))$, the action of $exp(0)+exp(\xi)t$ on $\varphi+\varphi_1t$
is defined by
\[
(\varphi+\varphi_1t).(exp(0)+exp(\xi)t) = e^{[\xi t,-]}(\varphi+\varphi_1t)-\frac{e^{[\xi t,-]}_Id}{[\xi t,-]}(d_{\mathfrak{g}}\xi t).
\]
We have
\begin{eqnarray*}
[\xi t,\varphi+\varphi_1t] & = & [\xi,\varphi]t + [\xi,\varphi_1]t^2 \\
 & = & [\xi,\varphi]t
\end{eqnarray*}
so for any integer $k\geq 1$ the iterate composites $[\xi t,-]^{\circ k}(\varphi+\varphi_1 t)$ are zero.
This implies that
\[
e^{[\xi t,-]}(\varphi+\varphi_1t) = \varphi+\varphi_1t+[\xi,\varphi]t.
\]
Moreover, we have
\begin{eqnarray*}
\frac{e^{[\xi t,-]}_Id}{[\xi t,-]}(d_{\mathfrak{g}}\xi t) & = & \sum_{k=1}^{+\infty}\frac{1}{k!}[\xi t,-]^{\circ (k-1)}(d_{\mathfrak{g}}\xi t) \\
 & = & d_{\mathfrak{g}}\xi t
\end{eqnarray*}
because $[\xi t,-]^{\circ (k-1)}(d_{\mathfrak{g}}\xi t)=0$ for $k\geq 2$.
Consequently, the formula defining the action of $(exp(0)+exp(\xi)t)$ on $(\varphi+\varphi_1t)$ becomes
\begin{eqnarray*}
(\varphi+\varphi_1t).(exp(0)+exp(\xi)t) & = & \varphi+\varphi_1t+[\xi,\varphi]t-d_{\mathfrak{g}}\xi t \\
& = & \varphi + (\varphi_1-d_{\varphi}\xi)t,
\end{eqnarray*}
hence
\[
o_{\varphi}(\mathbb{K}[t]/(t^2))(exp(0)+exp(\xi)t)=\varphi - d_{\varphi}\xi t.
\]
The map $o_{\varphi}(\mathbb{K}[t]/(t^2))$, in turn, fits in a commutative square
\[
\xymatrix{
\{exp(0)+exp(\xi)t,\xi\in \mathfrak{g}^0\}\ar[r]^-{o_{\varphi}(\mathbb{K}[t]/(t^2))}\ar[d]_-{\cong} & \{\varphi+\varphi_1t\in MC(\mathfrak{g}\otimes\mathbb{K}[t]/(t^2))\}\ar[d]^-{\cong} \\
\mathfrak{g}^0 \ar[r] & Z^1\mathfrak{g}^{\varphi}
}
\]
where the left vertical bijection sends $exp(0)+exp(\xi)t$ to $\xi$ and the right vertical bijection
sends $\varphi+\varphi_1 t$ to $\varphi_1$.
Since $o_{\varphi}(\mathbb{K}[t]/(t^2))$ sends $exp(0)+exp(\xi)t$ to $\varphi - d_{\varphi}\xi t$,
we finally obtain
\[
d_{exp(0)}o_{\varphi} = -d_{\varphi}:\mathfrak{g}^0\rightarrow Z^1\mathfrak{g}^{\varphi}.
\]
\end{proof}
We thus answered our question:
\begin{thm}
Let $\mathfrak{g}$ be a complete dg Lie algebra such that $\mathfrak{g}^1$ is of finite dimension.
Let $\varphi$ be a Maurer-Cartan element of $\mathfrak{g}$ and $[\varphi]\in\mathcal{MC}(\mathfrak{g})$ its equivalence class.
The $0^{th}$ cohomology group of the tangent complex of the algebraic stack $[MC(\mathfrak{g})/exp(\mathfrak{g}^0)]$ at $[\varphi]$ is the global tangent space of the deformation
functor $Def_{\mathfrak{g}^{\varphi}}$, that is,
\[
H^0\mathbb{T}_{[MC(\mathfrak{g})/exp(\mathfrak{g}^0)],[\varphi]} = t_{Def_{\mathfrak{g}^{\varphi}}} = H^1\mathfrak{g}^{\varphi}.
\]
\end{thm}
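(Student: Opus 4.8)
The plan is to assemble the statement from the computation of the tangent complex carried out in the preceding lemma together with the general description of $H^0$ of the tangent complex of a quotient stack given in Section 4.3. The two equalities are proved separately: the left-hand one, $H^0\mathbb{T}_{[MC(\mathfrak{g})/exp(\mathfrak{g}^0)],[\varphi]}=H^1\mathfrak{g}^{\varphi}$, comes from the stack side, while the right-hand one, $t_{Def_{\mathfrak{g}^{\varphi}}}=H^1\mathfrak{g}^{\varphi}$, comes from the deformation-functor side. The unifying observation is that both sides will be identified with one and the same quotient, namely $Z^1(\mathfrak{g}^{\varphi})/d_{\varphi}(\mathfrak{g}^0)$, so that once each has been brought into this form the two identifications suffice.

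First I would invoke the preceding lemma, which exhibits the tangent complex $\mathbb{T}_{[MC(\mathfrak{g})/exp(\mathfrak{g}^0)],[\varphi]}$ as the two-term complex $-d_{\varphi}:\mathfrak{g}^0\rightarrow Z^1(\mathfrak{g}^{\varphi})$ placed in degrees $-1$ and $0$. Following the general discussion of Section 4.3, where $H^0\mathbb{T}_{[X/G],x}=T_xX/Im(d\phi)$, its zeroth cohomology is the cokernel of this differential, that is $Z^1(\mathfrak{g}^{\varphi})/Im(-d_{\varphi})$. Since $d_{\varphi}$ is (up to sign) the differential of the twisted algebra $\mathfrak{g}^{\varphi}$, and since $\varphi$ is a Maurer-Cartan element so that $d_{\varphi}^2=0$ and $d_{\varphi}$ does carry $\mathfrak{g}^0$ into $Z^1(\mathfrak{g}^{\varphi})$, the image $d_{\varphi}(\mathfrak{g}^0)$ is exactly the space $B^1(\mathfrak{g}^{\varphi})$ of $1$-coboundaries. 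Hence $H^0\mathbb{T}=Z^1(\mathfrak{g}^{\varphi})/B^1(\mathfrak{g}^{\varphi})=H^1\mathfrak{g}^{\varphi}$.

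For the right-hand equality I would apply the computation of the tangent space of a deformation functor recalled in Section 4.2, which gives $t_{Def_{\mathfrak{g}}}=Z^1\mathfrak{g}/d(\mathfrak{g}^0)=H^1\mathfrak{g}$, to the twisted algebra $\mathfrak{g}^{\varphi}$ in place of $\mathfrak{g}$. This is legitimate because $\mathfrak{g}^{\varphi}$ is again a complete dg Lie algebra by Lemma 2.2, and its degree $1$ part coincides with $\mathfrak{g}^1$ and is therefore finite dimensional; thus the Maurer-Cartan scheme and the gauge-group action underlying $Def_{\mathfrak{g}^{\varphi}}$ are well defined. The formula then reads $t_{Def_{\mathfrak{g}^{\varphi}}}=Z^1(\mathfrak{g}^{\varphi})/d_{\varphi}(\mathfrak{g}^0)=H^1\mathfrak{g}^{\varphi}$, which is literally the same quotient obtained on the stack side, so chaining the two identifications yields the asserted chain of equalities.

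The essential content has already been absorbed into the preceding lemma, where the orbit map was differentiated and shown to be $-d_{\varphi}$; once that is in hand the present statement is a bookkeeping of cokernels. Accordingly, the point to keep straight, rather than a genuine obstacle, is the role of twisting: it is the twisted algebra $\mathfrak{g}^{\varphi}$, not $\mathfrak{g}$ itself, that governs the local geometry of the stack near $[\varphi]$, since twisting translates the base point $\varphi$ to the zero Maurer-Cartan element. Checking that $\mathfrak{g}^{\varphi}$ inherits the standing finiteness and completeness hypotheses, and that the image of $d_{\varphi}$ is precisely the coboundary space $B^1(\mathfrak{g}^{\varphi})$, are the only details that require any care.
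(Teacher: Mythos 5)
Your proposal is correct and follows exactly the paper's route: the paper likewise deduces the theorem directly from the preceding lemma (the tangent complex is $-d_{\varphi}:\mathfrak{g}^0\rightarrow Z^1(\mathfrak{g}^{\varphi})$, so $H^0$ is the cokernel $H^1\mathfrak{g}^{\varphi}$) together with the Section~4.2 computation $t_{Def_{\mathfrak{g}}}=H^1\mathfrak{g}$ applied to the twisted algebra $\mathfrak{g}^{\varphi}$. Your added checks---that $\mathfrak{g}^{\varphi}$ inherits completeness and finite-dimensionality of the degree-$1$ part, and that $d_{\varphi}(\mathfrak{g}^0)=B^1(\mathfrak{g}^{\varphi})$---are exactly the implicit details the paper leaves to the reader.
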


\subsection{Removing the assumption about $dim(\mathfrak{g}^1)$}

The assumption about the dimension of $\mathfrak{g}^1$ is restrictive. It is in general not satisfied in algebraic deformation theory, see for instance
the definition of the deformation complex of a differential graded algebra over an operad (Chapter 12 of \cite{LV}).
However, for any complete dg Lie algebra $\mathfrak{g}$ we have
\[
MC(\mathfrak{g})=lim MC(\mathfrak{g}/F_r\mathfrak{g})
\]
and more generally
\[
MC(\mathfrak{g}\otimes A)=lim MC(\mathfrak{g}/F_r\mathfrak{g}\otimes A)
\]
for any commutative algebra $A$.
If the vector space of degree $1$ elements of $\mathfrak{g}/F_r\mathfrak{g}$ is of finite dimension for every integer $r$, then $MC(\mathfrak{g})$ is a limit of algebraic varieties, so it is a priori
not an algebraic variety anymore but still a scheme. Consequently, all the preceding result still holds.

\subsection{Deformation complexes of algebraic structures}

The assumption above is enough to recover all deformation complexes coded by Koszul operads and Koszul properads.
To fully define all the concepts we need to justify this claim, we would have to go further in the theory of operads and props at a point which is out of
the scope of this paper. Therefore, we just provide some of the ideas behind, explain the
main argument and refer the reader to \cite{LV}, \cite{Mar}, \cite{MV} and \cite{Val}.
for more details about the underlying concepts and objects.

Let us consider a collection of chain complexes $\{P(m,n)\}$ representing "formal operations" with $m$
inputs and $n$ outputs. Each $P(m,n)$ is equipped with a right action of the symmetric group $\Sigma_m$
and a left action of $\Sigma_n$ commuting with each other. These actions represent the permutations of
the inputs and the outputs of the formal operations. Such a collection is called a $\Sigma$-biobject.
A properad is a monoid in the category of $\Sigma$-biobjects for a certain product, the connected composition
product. Being a monoid for this connected composition product means that one can compose the operations of $P$
along $2$-levelled directed connected graphs (with no loops), whose vertices are indexed by elements of $P$.
For a detailed definition we refer the reader to \cite{Val}, where properads where introduced.

Properads are used to parametrize various kinds of algebras and bialgebras (thus generalizing operads, which
encode only algebras). To do this, let us consider a chain complex $X$. We associate to $X$ a properad $End_X$
called the endomorphism properad of $X$ and defined by $End_X(m,n)=Hom_{\mathbb{K}}(X^{\otimes m},
X^{\otimes n})$, where $Hom_{\mathbb{K}}$ is the differential graded hom of chain complexes.
A $P$-algebra structure on $X$ is then a properad morphism
\[
P\rightarrow End_X,
\]
that is, a collection of chain morphisms
\[
P(m,n)\rightarrow Hom_{\mathbb{K}}(X^{\otimes m},X^{\otimes n})
\]
equivariant for the actions of the symmetric groups and compatible with the composition products.
This means that each "formal operation" of $P$ is sent to a map $X^{\otimes m}\rightarrow X^{\otimes n}$,
and the way these operations compose in $P$ gives the relations satisfied by such maps.
For instance, if $P=Ass$ is the properad of associative algebras, then $P(2,1)$ is generated
by an element $\mu$ which is sent to an associative product $\mu_X:X\otimes X\rightarrow X$ on $X$.
Properads form a model category (see Appendix of \cite{MV}) in which one can generalize
Koszul duality theory to obtain "small" cofibrant resolutions for a certain kind of properads
(Koszul properads, see \cite{Val}).

Let $P$ be a dg properad and $X$ be a chain complex.
Let $P_{\infty}=(\mathcal{F}(s^{-1}\overline{C}),\partial)$ be a cofibrant resolution of $P$ given by a quasi-free dg properad, generated by a $\Sigma$-biobject $C$.
We suppose that each $C(m,n)$ is a chain complex of finite dimension in each degree, and that for a fixed degree $d$, $C_d(m,n)$ is non zero only for a finite number of couples
$(m,n)$. By Theorem 5 of \cite{MV}, the complex of $\Sigma$-biobjects homomorphisms $\mathfrak{g}_{P,X}=Hom_{\Sigma}(\overline{C},End_X)$ is a complete $L_{\infty}$ algebra.
Moreover, it turns out that it satisfies the assumption above. To see this, we use the idea of Proposition 15 in \cite{Mar}
that $\mathfrak{g}$ admits a certain decomposition $\mathfrak{g}=\prod_s\mathfrak{g}_s$ such that the filtration defined by
$F_r\mathfrak{g}=\prod_{s\geq l}\mathfrak{g}_s$ makes $\mathfrak{g}$ into a complete $L_{\infty}$ algebra. We then have $\mathfrak{g}/F_r\mathfrak{g}=\prod_{s=1}^{l-1}\mathfrak{g}_s=\bigoplus_{s=1}^{l-1}\mathfrak{g}_s$, in particular
$(\mathfrak{g}/F_r\mathfrak{g})^1=\bigoplus_{s=1}^{l-1}\mathfrak{g}_s^1$. The assumptions on $C$ directly implies that each $\mathfrak{g}_s^1$ is of finite dimension, hence $(\mathfrak{g}/F_r\mathfrak{g})^1$ is of finite dimension for every
integer $r$.
To obtain a complete Lie algebra instead of an $L_{\infty}$-algebra, one has to add the assumption that $\partial(s^{-1}\overline{C})\subset \mathcal{F}(s^{-1}\overline{C})^{(\leq 2)}$, that is, the image of the generators under the differential consists in graphs with at most two vertices (see Theorem 5 in \cite{MV}).

Let us also note that Maurer-Cartan elements of $\mathfrak{g}_{P,X}$ are exactly the $P_{\infty}$-algebra structures on X.
If $X$ is a $P$-algebra, thus equipped with a map $P\rightarrow End_X$, then the deformation complex of $X$ is defined by the derivations complex $Der(P_{\infty},End_X)$.
These derivations are defined with respect to the properad morphism $\varphi:P_{\infty}\rightarrow P\rightarrow End_X$, which is actually a Maurer-Cartan element
of $\mathfrak{g}_{P,X}$, and we have an isomorphism $\mathfrak{g}_{P,X}^{\varphi}\cong Der(P_{\infty},End_X)$.

\begin{thm}
Let $P$ be a Koszul dg properad. Let $P_{\infty}=(\mathcal{F}(s^{-1}\overline{C}),\partial)$ be the associated cofibrant resolution via Koszul duality.
Suppose that each $C(m,n)$ is a chain complex of finite dimension in each degree, and that for a fixed degree $d$, $C_d(m,n)$ is non zero only for a finite number of couples
$(m,n)$. Then:

(1) The Maurer-Cartan moduli set of $\mathfrak{g}_{P,X}$ forms a prestack, whose stackification is naturally isomorphic to the quotient stack $[MC(\mathfrak{g}_{P,X})/exp(\mathfrak{g}_{P,X}^0)]$;

(2)Given an equivalence class $[\varphi]\in\mathcal{MC}(\mathfrak{g}_{P,X})$, the $0^{th}$ cohomology group of the tangent complex of $[MC(\mathfrak{g}_{P,X})/exp(\mathfrak{g}_{P,X}^0)]$ is
$H^1\mathfrak{g}_{P,X}^{\varphi}$, that is, the tangent space of the deformation functor $Def_{\mathfrak{g}_{P,X}^{\varphi}}$.
\end{thm}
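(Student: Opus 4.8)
The plan is to deduce this statement as a direct application of Theorem 4.6, in the form weakened in Section 4.4, to the deformation complex $\mathfrak{g}_{P,X}=Hom_{\Sigma}(\overline{C},End_X)$, after checking that this complex satisfies the relevant hypotheses. First I would recall that by Theorem 5 of \cite{MV} the complex $\mathfrak{g}_{P,X}$ is a complete $L_{\infty}$ algebra, and that under the additional assumption $\partial(s^{-1}\overline{C})\subset \mathcal{F}(s^{-1}\overline{C})^{(\leq 2)}$ it is in fact a complete dg Lie algebra. Since Theorem 4.6 is stated for dg Lie algebras, I would make this reduction explicit, leaving the purely $L_{\infty}$ situation to the concluding remark.

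The technical heart of the argument is to verify the finiteness hypothesis required by the weakened form of Theorem 4.6, namely that the vector space $(\mathfrak{g}_{P,X}/F_r\mathfrak{g}_{P,X})^1$ is finite dimensional for every integer $r$. Following Proposition 15 of \cite{Mar}, I would use the decomposition $\mathfrak{g}_{P,X}=\prod_s\mathfrak{g}_s$ by weight, together with the filtration $F_r\mathfrak{g}_{P,X}=\prod_{s\geq l}\mathfrak{g}_s$ that makes $\mathfrak{g}_{P,X}$ complete. This yields $\mathfrak{g}_{P,X}/F_r\mathfrak{g}_{P,X}=\bigoplus_{s=1}^{l-1}\mathfrak{g}_s$, so that $(\mathfrak{g}_{P,X}/F_r\mathfrak{g}_{P,X})^1=\bigoplus_{s=1}^{l-1}\mathfrak{g}_s^1$ is a finite direct sum. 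The two finiteness assumptions on $C$, that each $C(m,n)$ is finite dimensional in each degree and that in a fixed degree only finitely many couples $(m,n)$ contribute, then force each summand $\mathfrak{g}_s^1$ to be finite dimensional, whence $(\mathfrak{g}_{P,X}/F_r\mathfrak{g}_{P,X})^1$ is finite dimensional.

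With these checks in place, Part (1) follows from the discussion of Section 4.2 combined with Section 4.4: the stagewise finiteness makes $MC(\mathfrak{g}_{P,X})=lim_r MC(\mathfrak{g}_{P,X}/F_r\mathfrak{g}_{P,X})$ a scheme carrying an action of the prounipotent gauge group $exp(\mathfrak{g}_{P,X}^0)$, so the associated Deligne groupoid defines a prestack whose stackification is the quotient stack $[MC(\mathfrak{g}_{P,X})/exp(\mathfrak{g}_{P,X}^0)]$. Part (2) is then exactly Theorem 4.6 applied to $\mathfrak{g}=\mathfrak{g}_{P,X}$, giving
\[
H^0\mathbb{T}_{[MC(\mathfrak{g}_{P,X})/exp(\mathfrak{g}_{P,X}^0)],[\varphi]}=H^1\mathfrak{g}_{P,X}^{\varphi}=t_{Def_{\mathfrak{g}_{P,X}^{\varphi}}}.
\]
At this last step I would also invoke the identification $\mathfrak{g}_{P,X}^{\varphi}\cong Der(P_{\infty},End_X)$, so that the tangent space is recognized as the cohomology of the genuine deformation complex of the $P_{\infty}$-structure encoded by $\varphi$.

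I expect the only real obstacle to lie in the second step: one must make sure that the weight decomposition of \cite{Mar} genuinely produces a complete filtration and that the two finiteness conditions on $C$ combine correctly to bound $(\mathfrak{g}_{P,X}/F_r\mathfrak{g}_{P,X})^1$ degreewise. The remaining content is the bookkeeping reduction to the dg Lie case and the citation of the general machinery already established in Sections 4.2--4.4.
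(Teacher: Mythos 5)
Your proposal is correct and follows essentially the same route as the paper: the paper likewise justifies the theorem by citing Theorem 5 of \cite{MV} (with the extra hypothesis $\partial(s^{-1}\overline{C})\subset\mathcal{F}(s^{-1}\overline{C})^{(\leq 2)}$ to reduce to the complete dg Lie case), verifies the stagewise finiteness of $(\mathfrak{g}_{P,X}/F_r\mathfrak{g}_{P,X})^1$ via the decomposition $\mathfrak{g}_{P,X}=\prod_s\mathfrak{g}_s$ and the filtration $F_r\mathfrak{g}=\prod_{s\geq l}\mathfrak{g}_s$ from Proposition 15 of \cite{Mar}, and then invokes the weakened finiteness setting of Section 4.4 together with the prestack, quotient-stack and tangent-complex machinery of Sections 4.2--4.3, including the identification $\mathfrak{g}_{P,X}^{\varphi}\cong Der(P_{\infty},End_X)$. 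Your deferral of the genuinely $L_{\infty}$ situation to the concluding remark also matches the paper's treatment.
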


\begin{rem}
For complete $L_{\infty}$ algebras, the geometric interpretation is more involved, since the Maurer-Cartan moduli set cannot be defined by
a quotient under the action of a gauge group. However, there is a way to prove that the associated functor is naturally
isomorphic to the Maurer-Cartan moduli set functor of a complete dg Lie algebra.
The main ingredient is the strictification of homotopy algebras over an operad. More precisely, since the category of
dg operads form a model category \cite{Hin}, any operad $P$ admits a cofibrant resolution $\pi:P_{\infty}
\stackrel{\sim}{\rightarrow}P$ which gives rise to a functor
\[
\pi^*:P-Alg\rightarrow P_{\infty}-Alg
\]
from $P$-algebras to $P_{\infty}$-algebras.
This functor is simply defined by precomposing any $P$-algebra structure $P\rightarrow End_X$ with $\pi$.
It turns out that $\pi^*$ admits a left adjoint, hence fitting in an adjunction
\[
\pi_{!}:P-Alg\rightarrow P_{\infty}-Alg:\pi^*.
\]
Algebras over an operad in $Ch_{\mathbb{K}}$ form a model category whose weak equivalences and fibrations
are the quasi-isomorphisms and surjections of chain complexes, so the right adjoint $\pi^*$ preserves
them by definition and thus form a right Quillen functor. Since $\pi$ is a weak equivalence, one can prove
that the Quillen pair $(\pi_{!},\pi^*)$ is actually a Quillen equivalence.
In particular, the unit of the adjunction is a natural weak equivalence of $P_{\infty}$-algebras
\[
X\stackrel{\sim}{\rightarrow}\pi^*\pi_{!}X,
\]
where $\pi^*\pi_{!}X$ is actually a $P$-algebra equipped with the trivial $P_{\infty}$ structure defined by $P_{\infty}\stackrel{\sim}{\rightarrow}P\rightarrow End_{\pi^*\pi_{!}X}$.
In this way, any $P_{\infty}$-algebra is naturally quasi-isomorphic to a $P$-algebra.
Applying this to the operad $Lie$ encoding dg Lie algebras, we obtain that any $L_{\infty}$ algebra $\mathfrak{g}$
is naturally quasi-isomorphic to the dg Lie algebra $\pi^*\pi_{!}\mathfrak{g}$ via the unit $\eta(\mathfrak{g}):\mathfrak{g}\stackrel{\sim}{\rightarrow}\pi^*\pi_{!}\mathfrak{g}$,
where $\pi:Lie_{\infty}\stackrel{\sim}{\rightarrow} Lie$.

Now let $\mathfrak{g}$ be a filtered $L_{\infty}$ algebra, we have a quasi-isomorphism of $L_{\infty}$ algebras
$\hat{\mathfrak{g}}\stackrel{\sim}{\rightarrow} \pi^*\pi_{!}\hat{\mathfrak{g}}$.
One has to check that this is a quasi-isomorphism of complete $L_{\infty}$ algebras inducing a quasi-isomorphism at each stage of the filtration,
hence a natural weak equivalence of simplicial Maurer-Cartan functors
\[
MC_{\bullet}(\mathfrak{g}\hat{\otimes}-)\stackrel{\sim}{\rightarrow}MC_{\bullet}(\pi^*\pi_{!}\mathfrak{g}\hat{\otimes}-)
\]
inducing, in turn, a natural isomorphism of Maurer-Cartan moduli set functors
\[
\pi_0MC_{\bullet}(\mathfrak{g}\hat{\otimes}-)\stackrel{\cong}{\rightarrow}
\pi_0MC_{\bullet}(\pi^*\pi_{!}\mathfrak{g}\hat{\otimes}-).
\]
We get a natural isomorphism
\[
\mathcal{MC}(\mathfrak{g}\hat{\otimes}-)\stackrel{\cong}{\rightarrow}
MC(\pi^*\pi_{!}\mathfrak{g}\hat{\otimes}-)/exp((\pi^*\pi_{!}\mathfrak{g})^0\otimes -).
\]
Then one has to check that $MC(\pi^*\pi_{!}\hat{\mathfrak{g}})$ is still a scheme to obtain the algebraic stack interpretation of the Maurer-Cartan moduli set
$MC(\pi^*\pi_{!}\mathfrak{g}\hat{\otimes}-)/exp((\pi^*\pi_{!}\mathfrak{g})^0\otimes -)$. Finally, given a Maurer-Cartan element $\varphi$ of $\mathfrak{g}$, Proposition 3.8 implies that
$H^1\mathfrak{g}^{\varphi}$, which is the global tangent space of the deformation functor $Def_{\mathfrak{g}^{\varphi}}$, is the $0^{th}$ cohomology group of the tangent complex
of the quotient stack $[MC(\pi^*\pi_{!}\mathfrak{g})/exp(\pi^*\pi_{!}\mathfrak{g}^0)]$ at the point $\eta(\mathfrak{g})(\varphi)$.
\end{rem}

\end{document}